\newtheorem{theorem}{Theorem}[section]
\newtheorem{lemma}[theorem]{Lemma}
\newtheorem{proposition}[theorem]{Proposition}
\newtheorem{corollary}[theorem]{Corollary}
\theoremstyle{definition}
\newtheorem{definition}[theorem]{Definition}
\newtheorem{example}[theorem]{Example}
\newtheorem{assumption}[theorem]{Assumption}
\newtheorem{remark}[theorem]{Remark}
\newcommand{\p}{^}
\newcommand{\X}[1]{X\p{#1}}
\newcommand{\N}[1]{X\p{#1}}
\newcommand{\pb}[2]{
	\ensuremath{\langle #1,#2 \rangle}}
	\newcommand{\cv}[2]{
	\ensuremath{[#1,#2]}}
\newcommand{\wt}{\widetilde}
	\newcommand{\rmd}{\mathrm{d}}
\newcommand{\rme}{\mathrm{e}}
\newcommand{\E}{\mathrm{E}}
\newcommand{\Lrm}{\mathrm{L}}
\newcommand{\Ws}{\mathrm{W}\p\sig}
\newcommand{\Bscr}{\mathscr{B}}
\newcommand{\Dscr}{\mathscr{D}}
\newcommand{\Escr}{\mathscr{E}}
\newcommand{\Fscr}{\mathscr{F}}
\newcommand{\Hscr}{\mathscr{H}}
 \newcommand{\Jscr}{\mathscr{J}}
\newcommand{\Kscr}{\mathscr{K}}
 \newcommand{\Xscr}{\mathscr{X}}
\newcommand{\et}{\eta}
 \newcommand{\al}{\alpha}
\newcommand{\bt}{\beta}
\newcommand{\gm}{\gamma}
\newcommand{\dt}{\delta}
\newcommand{\ep}{\varepsilon}
\newcommand{\lm}{\lambda}
\newcommand{\Lm}{\Lambda}
\newcommand{\sig}{\sigma}
\newcommand{\Om}{\Omega}
\newcommand{\cadlag}{c\`adl\`ag }
\newcommand{\Span}{\mathrm{Span}}
\newcommand{\Bb}{\mathbb{B}}
\newcommand{\Ebb}{\mathbb{E}}
\newcommand{\EE}{\mathbb{E}}
\newcommand{\Fbb}{\mathbb{F}}
\newcommand{\Nbb}{\mathbb{N}}
\newcommand{\Pbb}{\mathbb{P}}
\newcommand{\Rbb}{\mathbb{R}}
\newcommand{\Zbb}{\mathbb{Z}}
\newcommand{\scr}[1]{\mathscr{#1}}
\newcommand{\bi}{\begin{itemize}}
\newcommand{\ei}{\end{itemize}}
\newcommand{\be}{\begin{enumerate}}
\newcommand{\ee}{\end{enumerate}}
\newcommand{\beq}{\begin{equation}}
\newcommand{\eeq}{\end{equation}}
\newcommand{\beqs}{\begin{equation*}}
\newcommand{\eeqs}{\end{equation*}}
\newcommand{\beqa}{\begin{eqnarray}}
\newcommand{\eeqa}{\end{eqnarray}}
\newcommand{\beqas}{\begin{eqnarray*}}
\newcommand{\eeqas}{\end{eqnarray*}}
 \newcommand{\aPP}[2]{\ensuremath{\langle #1,#2 \rangle}}
\newcommand{\equa}{\begin{eqnarray*}}
\newcommand{\tion}{\end{eqnarray*}}
\newcommand{\equal}{\begin{eqnarray}}
\newcommand{\tionl}{\end{eqnarray}}
\newcommand{\less}{\hspace{-3em}}
\def\timenow{\@tempcnta\time
\@tempcntb\@tempcnta
\divide\@tempcntb60
\ifnum10>\@tempcntb0\fi\number\@tempcntb
:\multiply\@tempcntb60
\advance\@tempcnta-\@tempcntb
\ifnum10>\@tempcnta0\fi\number\@tempcnta}
\title{Product and Moment Formulas for Iterated Stochastic Integrals (associated with L\'evy Processes)}
\author{Paolo Di Tella$^1$   \hspace{0.7em} Christel Geiss$^2$    \\  
     }
\date{}
\begin{document}

\maketitle
\begin{abstract} {In this paper, we obtain explicit product and moment formulas for products of iterated integrals generated by families of square integrable martingales associated with 
an arbitrary L\'evy process. We propose a new approach applying  the theory of compensated-covariation stable families of martingales.  Our  main tool is a representation formula for products of elements of a  compensated-covariation stable family, which enables 
to consider L\'evy processes, with both jumps and Gaussian part. }

 \end{abstract}

\vspace{1em} 
{\noindent \textit{Keywords:}  Product and moment formulas, iterated integrals, compensated-covariation stable families, chaotic representation property, L\'evy processes.
}

{\noindent
\footnotetext[1]{ Institute of Mathematical Stochastics, University of Technology Dresden, Germany. \\ \hspace*{1.5em}
 {\tt Paolo.Di\_Tella{\rm@}tu-dresden.de}}
\footnotetext[2]{University of Jyvaskyla, Department of Mathematics and Statistics, P.O.Box 35, FI-40014 University of Jyvaskyla. \\ \hspace*{1.5em} {\tt christel.geiss{\rm@}jyu.fi}}}


\section{Introduction} 

If $X$  and $Y$ are square integrable martingales, the martingale
$$C(X,Y)_{\,t} := [X,Y]_t-\aPP{X}{Y}_t, \quad t\ge 0,$$
is called  the \emph{compensated-covariation process} of $X$ and $Y$. Here $[X,Y]$ and $\aPP{X}{Y}$  denote the quadratic covariation and the 
predictable quadratic covariation of $X$ and $Y,$ respectively.
A family $\Xscr$ of square integrable martingales is called \emph{compensated-covariation stable} if 
 $C(X,Y) \in\Xscr$ for all $X,Y\in\Xscr$.  
  
Compensated-covariation stability was introduced by Di Tella and Engelbert in \cite{DTE15} to investigate the predictable representation property (PRP) of families of 
martingales.  Di Tella and Engelbert further exploited  this  property in \cite{DTE16} to construct families of martingales possessing the chaotic 
representation property (CRP).

Let $\Xscr:=\{\X{\al}, \, \al\in\Lm \}$ be a family  of  square integrable,  quasi-left continuous martingales,  where $\Lm$ is an arbitrary index set. 
 In \cite{DTE15} the following recursive  representation formula  for products  of elements from  $\Xscr$ was shown, provided that 
 $\Xscr$ is compensated-covariation stable:
  \begin{equation}  \label{product}
\begin{split}
 \prod_{i=1}\p N X\p{\al_{i}}
=&
\sum_{i=1}\p N\sum_{{1\leq j_1<\ldots<j_i\leq N}}\Bigl(  \prod_{\begin{subarray}{c}k=1\\k\neq j_1,\ldots,j_i\end{subarray}}\p N X\p{\al_k}_-\Bigr)\cdot X\p{\al_{j_1},\ldots, \al_{j_i}}
\\
&+
\sum_{i=2}\p N\sum_{1\leq j_1<\ldots<j_i\leq N}\Bigl(\prod_{\begin{subarray}{c}k=1\\k\neq j_1,\ldots,j_i\end{subarray}}\p N X\p{\al_k}_-\Bigr)
\cdot
\pb{X\p{\al_{j_1},...,\al_{j_{i-1}}}}{X\p{\al_{j_{i}}}},
\end{split}
\end{equation}
where   $X^{\al_{j_1}, \al_{j_2}}:= C(X^{\al_{j_1}}, X^{\al_{j_2}})$ and $X\p{\al_{j_1},\ldots, \al_{j_i}}:= C(X\p{\al_{j_1},\ldots,\al_{j_{i-1}}}, X^{\al_{j_i}})$.

Relation \eqref{product} was the corner stone to obtain sufficient conditions on $\Xscr$ for the PRP and the CRP. It will also be the starting point in the present paper to derive a representation formula for products of iterated integrals. 

In the first part of this work, we show that the compensated-covariation stability of the family  $\Xscr$, which is required for \eqref{product}, transfers to the family $\Jscr_\rme$ of the elementary iterated integrals generated by $\Xscr$ (see Definition  \ref{def:sp.mul.int} for $\Jscr_\rme$).  This straightforwardly leads to a a version of the {\it recursive} representation formula  \eqref{product} for products of elements from $\Jscr_\rme$ (see Theorem \ref{thm:prod.for.it.int} below). This  recursive representation formula  turns out to be crucial for deriving our product and moment formulas. Let $X$ be a L\'evy processes such that $\Ebb[|X_t|\p N]<+\infty$, $N\in\Nbb$.  As a by-product of Theorem \ref{thm:prod.for.it.int}, we get in Theorem \ref{thm:mom.lev} below a recursive representation formula for  $\Ebb[X_t\p N]$ in terms of lower moments of $X_t$. This formula seems also to be new and of independent interest.

In the second part of the paper (which is  Section 4),  using the general theory for compensated-covariation stable families developed in the first part as a tool, we obtain product and moment formulas for the iterated integrals generated by  families of martingales associated with a L\'evy process. The main idea is to exploit the recursive formula obtained in Theorem \ref{thm:prod.for.it.int} until we explicitly solve the recursion. 

Let $X$ be a L\'evy process with characteristic triplet $(\gm,\sig\p2,\nu)$ and let $\mu$ be the measure defined by $\mu:=\sig\p2\delta_0+\nu$, where $\delta_0$ is
the Dirac measure concentrated in zero and $\nu$ is the L\'evy measure of $X$. With any system of functions $\Lm\subseteq L\p2(\mu)$ we associate a family 
 $\Xscr_\Lm=\{\X{\al},\ \al\in\Lm\} $ of square integrable martingales setting 
\equa
   X\p\al_t:=\al(0) \sigma W_t +\int_{[0,t]\times \Rbb } \al(x)\tilde N(\rmd s,\rmd x),
\tion
where $W$ is the Brownian motion and $\tilde N$ the compensated Poisson random measure appearing in the L\'evy-It\^o decomposition of $X.$
We recall that, if $\Lm$ is a total system (i.e., the linear hull is dense), then the family $\Xscr_\Lm$ possesses 
the CRP (see \cite[Theorem 6.6]{DTE16}). 
As a first step, we show the product formula for the elementary iterated integrals $\Jscr_\rme$ generated by $\Xscr_\Lm$, provided 
that $\Jscr_\rme$ 
is compensated-covariation stable (cf.\ Theorem \ref{thm:prod.rule} below).  From this product formula 
the moment formula \eqref{moment-formula} is obtained. In Example \ref{calculation} we illustrate formula \eqref{moment-formula} in some special cases.

In Theorem \ref{thm:gen.it.in} below, which is the main result 
of this paper, we extend the product and the moment formula to (non-necessarily elementary) iterated integrals generated by any  family $\Xscr_\Lm$  satisfying $\Lm \subseteq  \bigcap_{p\geq2}L\p p(\mu).$ Especially,  $\Xscr_\Lm$  does  not need to 
be compensated-covariation stable. This extension
is important because, to ensure the compensated-covariation 
stability of $\Xscr_\Lm$, it is necessary to require that 
$\Lm\subseteq L\p2(\mu)$ 
is stable under multiplication. However, in some important situations,
as in the case of an orthonormal basis of $L\p2(\mu)$, the system $\Lm$ may fail to be stable under multiplication.

Thanks to Theorem \ref{thm:gen.it.in}, for any L\'evy process $X$, we  have at our disposal a rich variety of families of martingales $\Xscr_\Lm$ that, according to \cite{DTE16}, possess the CRP,  
and the product and moment formulas hold for the iterated integrals generated by $\Xscr_\Lm$.   \smallskip

We now give an overview of results about product and moment formulas which are available in the literature. 

Russo and Vallois  generalize in \cite{RussoVallois} the well-known product formula of two  iterated integrals generated by a Brownian motion (see \cite{N06}) to a version where 
the  iterated integrals are generated by a normal martingale. To define the iterated integrals Russo and Vallois follow the approach of Meyer \cite{Me76}.

In \cite{LeeShih04}, Lee and Shih introduced the \emph{multiple integrals} associated with a L\'evy process following It\^o \cite{I56} and then they obtained the product formula of two multiple integrals.

We recall that, for L\'evy processes, there is a direct relation between iterated integrals, as defined in \S\ref{subs:el.it.int} below, and the multiple integrals introduced by It\^o in \cite{I56} (see  \cite[Proposition 6.9]{DTE16} and  \cite[Proposition 7]{SU07}).

The relation between moments, cumulants, iterated integrals and orthogonal polynomials was  studied by Sol\'e and Utzet in \cite{SU08}  and \cite{SU081} for the case of Teugels martingales and for L\'evy processes possessing moments of arbitrary order or  an exponential moment.  

 Peccati and Taqqu gave in \cite{PT11} product formulas for iterated integrals generated by a Brownian motion. In this case, the basic tool is the \emph{hypercontractivity} of the iterated integrals. However, hypercontractivity is typical for the Brownian case and it does not hold, for instance, for iterated integrals generated by a compensated Poisson process. This is the reason why for L\'evy processes, in general, additional integrability  conditions on the integrands are required  if product or moment formulas are considered. 

In Peccati and Taqqu \cite{PT11} also moments and cumulants of products for multiple integrals generated by a Poisson random measure have been investigated. This was done on the basis of the Mecke formula (see Mecke \cite{M67}) and diagram formulas. For similar results we refer  also to  Surgailis \cite{S84}. In Last et al.\ \cite{LPST14}, again on the basis of the Mecke formula and of diagram formulas, more general product and moment formulas for multiple iterated integrals than those in \cite{PT11} and \cite{S84} have been obtained. We stress that all these results cannot be applied for  L\'evy processes with a Gaussian part, because in this case the Mecke formula is not applicable.

First results about the product and the moment formulas for L\'evy processes with non-vanishing Gaussian part, were obtained by Geiss and Labart in \cite{GL16,GL17}, where  iterated integrals generated by a simple L\'evy process (i.e., the sum of a Brownian motion and of a compensated Poisson process) are considered. 
The moment formula  in Theorem \ref{thm:gen.it.in} below generalizes \cite{GL16,GL17} to arbitrary  L\'evy processes. 

This paper has the following structure: We summarise the necessary background in Section \ref{sec:prel}.  In Section \ref{sec:mom.el.it} we study the properties of the elementary iterated integrals generated by a compensated-covariation stable family of martingales. In Section \ref{sec:Lev.pr} we obtain product and moment formulas for the iterated integrals generated by families of martingales associated with a L\'evy process. We then conclude giving concrete examples of these families, including Teugels martingales.

\section{Preliminary Notions}\label{sec:prel}
Let $(\Om,\Fscr,\Pbb)$ be a \emph{complete} probability space and let $\Fbb$ be a filtration satisfying the \emph{usual conditions}. We shall always consider real-valued stochastic processes on a finite time horizon $[0,T]$, $T>0,$ and  assume that $\Fscr= \Fscr_T.$

Let $\Xscr$ be a family of processes. By $\Fbb\p\Xscr$ we denote the smallest filtration satisfying the usual conditions such that $\Xscr$ is an adapted family. If $\Xscr=\{X\}$ we write $\Fbb\p\Xscr=\Fbb\p X$.

We say that a process $X$ has a finite moment of order $N$ if $\Ebb[|X_t|\p N]<+\infty$, $t\in[0,T]$. If this holds for every $N\in\Nbb$, we say that $X$ has finite moments of every order.

For a \cadlag process $X$, we denote by $\Delta X:=X-X_-$ the \emph{jump process} of $X$, with $X_{t-}:=\lim_{s\uparrow t}X_s$ for $t>0$, and, by convention, $X_{0-}:=X_0$ so that $\Delta X_0=0$.

In the present paper, $\Fbb$-martingales are always assumed to be c\`adl\`ag and starting at zero. 

A martingale $X$ belongs to $\Hscr\p p$ if $\|X\|_p\p p:=\Ebb[|X_T|\p p]]<+\infty$, $p\geq1$, and  $(\Hscr\p p,\|\cdot\|_{\Hscr\p p})$ is a Banach space for every $p\geq1$ and a Hilbert space for $p=2$. We sometimes write $\Hscr\p p(\Fbb)$ to specify the filtration. 

If $X,Y\in\Hscr\p{\,2}$, then there exists a unique predictable process of integrable variation, denoted by $\pb{X}{Y}$ and called the \emph{predictable covariation} of $X$ and $Y$, such that $XY-\pb{X}{Y}\in\Hscr\p1$ and $\pb{X}{Y}_0=0$. For $X\in\Hscr\p2$, the process $\aPP{X}{X}$ has a continuous version if and only if $X$ is a quasi-left continuous martingale (cf.\ \cite[Theorem I.4.2]{JS00}), that is, $\Delta X_\tau=0$ for every \emph{predictable} stopping time $\tau$.

With two semimartingales $X$ and $Y$, we associate the process $[X,Y]$, called covariation of $X$ and $Y$, defining
\begin{equation*}
[X,Y]_{\,t}:=\pb{X\p c}{Y\p c}_{\,t}+\sum_{0\leq s\leq t}\Delta X_{\,s}\Delta Y_{\,s},\quad t\in[0,T],
\end{equation*}
where $X\p c$ and $Y\p c$ denote the continuous martingale part of $X$ and $Y$ respectively. 
For $X,Y\in\Hscr\p{\,2}$, the process $[X,Y]$ is of integrable variation and $[X,Y]-\pb{X}{Y}\in\Hscr\p1$, that is $\pb{X}{Y}$ is the \emph{predictable compensator} of $[X,Y]$ (\cite[Proposition I.4.50 b)]{JS00}).

We recall the definition of the stochastic integral with respect to a martingale $X\in\Hscr\p{\,2}$. The space of integrands for $X$ is given by
$\Lrm\p{2}(X):=\{H\textnormal{ predictable}: \Ebb[H\p{\,2}\cdot\pb XX_T]<+\infty\}$,
where we denote $H\p{\,2}\cdot\pb XX_T:=\int_0\p TH\p2_s\rmd \aPP{X}{X}_s$. For $X\in\Hscr\p{\,2}$ and $H\in\Lrm\p{2}(X)$
 we denote  by $H\cdot X$ or $\int_0\p\cdot H_s\rmd X_s$ the stochastic integral of $H$ with respect to $X$, characterized as follows: Let $Z\in\Hscr\p2$. Then $Z=H\cdot X$ if and only if 
$\pb ZY=H\cdot\pb XY$, for every $Y\in\Hscr\p{\,2}$. 

Let $\Lm$ be an arbitrary parameter set and $n\leq m$ natural numbers. By $\al_{n:m}$ we denote the ordered $(m-n+1)$-dimensional tuple  $(\al_n,\ldots,\al_m)\in\Lm\p {m-n+1}$.
For  $\al_{n_1:m_1}\in \Lm\p {m_1-n_1+1}$ and $\bt_{n_2:m_2}\in\Lm\p {m_2-n_2+1}$, we denote by $\al_{n_1:m_1},\bt_{n_2:m_2}$ the $(m_1-n_1+1)+(m_2-n_2+1)$-dimensional 
tuple obtained by continuing with $\bt_{n_2:m_2}$ after $\al_{n_1:m_1}$, that is, $\al_{n_1:m_1},\bt_{n_2:m_2}:=(\al_{n_1},\ldots,\al_{m_1},\bt_{n_2},\ldots,\bt_{m_2})$.

We use the following notation: For any measure $\rho$ and any function $f\in L\p1(\rho)$, we denote by $\rho(f)$ the integral of $f$ with respect to $\rho$, that is,
$
\rho(f):=\int f(x)\,\rho(\rmd x)$.

\section{Iterated integrals and compensated-covariation stability}\label{sec:mom.el.it}
In this section we introduce compensated-covariation stable families of martingales and iterated integrals generated by such families. We show that if $\Xscr\subseteq\Hscr\p2$ is a compensated-covariation stable family, then the family of elementary iterated integrals generated by $\Xscr$ is compensated-covariation stable as well. Using this property, we deduce a formula to represent products and moments of iterated integrals.

 \subsection{Iterated integrals}\label{subs:el.it.int}
To begin with, we define elementary iterated integrals generated by a finite family of martingales. For  $\X\al\in\Hscr\p2$ with deterministic point brackets  $\aPP{\X{\al}}{\X{\al}}$, we denote 
\equal \label{rho}
\rho\p{\al} (\rmd t) := \rmd\aPP{\X{\al}}{\X{\al}}_t, \quad t\in[0,T]. 
\tionl

Let now $m\in\Nbb$ be given and  $\X{\al_1},\ldots,\X{\al_m}\in\Hscr\p2$ be such that the predictable covariation  $\aPP{\X{\al_j}}{\X{\al_k}}$ is a deterministic function for $j,k=1,\ldots,m$. Then we   denote  by $\rho\p{\al_{1:m}}$  the product measure 
\equal \label{product-rho}
   \rho\p{\al_{1:m}} := \rho\p{\al_1}\otimes \ldots \otimes \rho\p{\al_m}
\tionl
 on  $([0,T]\p m,\Bscr([0,T])\p m)$. 

\begin{definition}\label{elem.it.int.}
\textnormal{(i)} The space of bounded measurable functions on $[0,T]$ will be denoted by $\Bb_T$. 
 For $m\ge 1$ we introduce the tensor product 
$$ \Bb_T\p {\otimes m}:=\{F_{\otimes_m}=F_1\otimes\cdots\otimes F_m:  F_1,\ldots,F_m \in \Bb_T  \} $$  
and call the elements $F_{\otimes_m}$ \emph{elementary functions of order} $m$. For $m=0$ put $F_{\otimes_0}:=1$. 

\textnormal{(ii)}  Let $m\geq0$ and the martingales $\X{\al_1}, \X{\al_2}, \ldots, \X{\al_m}\in\Hscr\p2$ be fixed. For all $0\leq n\leq m$, the $n$\emph{-fold elementary iterated integral} of $F_{\otimes_n} \in\Bb_T\p {\otimes n} $ 
with respect to the martingales $(\X{\al_1}, \X{\al_2}, \ldots, \X{\al_n})$ is 
defined inductively by letting  $J_{0}:\Rbb \to \Rbb $ be the identical map and
\begin{equation}\label{eq:def.sim.mul.int}
J_{n}\p{\al_{1:n}}(F_{\otimes_n})_{\,t}
:=\int_0^t J_{n-1}^{\al_{1:n-1}}(F_{\otimes_{n-1}})_{u-}\,F_{n}(u)\, \rmd {\X{\al_{n}}_u},\quad t\in [0,T]\,.
\end{equation}
\end{definition}
\bigskip

The following properties of elementary iterated integrals are shown in   \cite[Lemma 3.2]{DTE16}.
\begin{proposition}\label{prop:isom.sim.mul.int} 
\textnormal{(i)} Let $m\geq 1$, $\X{\al_1}, \ldots\ , \X{\al_m} \in \Hscr\p2$  such that $\aPP{\X{\al_j}}{\X{\al_k}}$ is a deterministic function, $j,k=1,\ldots,m$. Then we have $J_{m}\p{\al_{1:m}}(F_{\otimes_m}) \in \Hscr\p2$ for $m\geq1$.

\textnormal{(ii)} Let $n\geq 1$, $\X{\bt_1}, \ldots\ , \X{\bt_n} \in \Hscr\p2$  such that $\aPP{\X{\bt_j}}{\X{\bt_k}}$ is a deterministic function, $j,k=1,\ldots,n$. 
Then for any    $F_{\otimes_m}  \in  \Bb_T\p {\otimes m} $ and    $G_{\otimes_n} \in \Bb_T\p {\otimes n}$ it holds $\Ebb\big[J_{m}^{\al_{1:m}}(F_{\otimes_m})_{\,t}J_{n}^{\bt_{1:n}}(G_{\otimes_n})_{\,t}\big]=0$, if $ m\neq n$, while, if $m=n$,
\beq\label{eq:IRmix.el}
\begin{split}
\Ebb\big[J_{m}^{\al_{1:m}}(F_{\otimes_m})_{\,t}&J_{m}^{\bt_{1:m}}(G_{\otimes_m})_{\,t}\big]\\
=&\displaystyle\int_0\p t\int_0\p {t_m-}\!\!\!\!\!\!\!\!\!\cdots\int_0\p{t_2-}\!\!\!\!F_1(t_1)G_1(t_1)\ldots F_ m(t_m)G_ m(t_m)\,\rmd\aPP{\X{\al_1}}{\X{\bt_1}}_{t_1}\ldots
\rmd\aPP{\X{\al_m}}{\X{\bt_m}}_{t_m}\,.
\end{split}
\end{equation}
\end{proposition}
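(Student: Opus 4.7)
The plan is a double induction: first establish (i) by induction on $m$, then use it to prove (ii) by induction on $m+n$. Throughout, set $M_m := J_m^{\alpha_{1:m}}(F_{\otimes_m})$ and $N_n := J_n^{\beta_{1:n}}(G_{\otimes_n})$.

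For (i), the base case $m=1$ reduces to checking $F_1 \in \Lrm^2(X^{\alpha_1})$, which holds because $F_1$ is bounded and $\rho^{\alpha_1}([0,T]) = \Ebb[\aPP{X^{\alpha_1}}{X^{\alpha_1}}_T] < \infty$. For the inductive step, I write $M_m = (M_{m-1,-} F_m) \cdot X^{\alpha_m}$ and verify $M_{m-1,-} F_m \in \Lrm^2(X^{\alpha_m})$. Since $\rho^{\alpha_m}$ is deterministic, Fubini gives
\begin{equation*}
\Ebb\Bigl[\int_0^T M_{m-1,u-}^2\, F_m(u)^2\, \rho^{\alpha_m}(du)\Bigr] = \int_0^T \Ebb[M_{m-1,u-}^2]\, F_m(u)^2\, \rho^{\alpha_m}(du),
\end{equation*}
and the right-hand side is finite because $M_{m-1}^{\,2}$ is a submartingale by the induction hypothesis, so $\Ebb[M_{m-1,u-}^2] \leq \Ebb[M_{m-1,T}^2] < \infty$, and $F_m$ is bounded.

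For (ii), part (i) guarantees $M_m, N_n \in \Hscr^2$, so $\Ebb[M_m(t) N_n(t)] = \Ebb[\aPP{M_m}{N_n}_t]$. Using the characterization of the stochastic integral ($Z = H \cdot X$ if and only if $\aPP{Z}{Y} = H \cdot \aPP{X}{Y}$ for every $Y \in \Hscr^2$), applied once to $M_m$ and once to $N_n$,
\begin{equation*}
\aPP{M_m}{N_n}_t = \int_0^t M_{m-1,u-}\, F_m(u)\, N_{n-1,u-}\, G_n(u)\, d\aPP{X^{\alpha_m}}{X^{\beta_n}}_u.
\end{equation*}
Taking expectations and exchanging with the deterministic bracket measure yields
\begin{equation*}
\Ebb[M_m(t) N_n(t)] = \int_0^t \Ebb[M_{m-1,u-}\, N_{n-1,u-}]\, F_m(u)\, G_n(u)\, d\aPP{X^{\alpha_m}}{X^{\beta_n}}_u.
\end{equation*}
If $\min(m,n) = 0$ the claim is immediate from $J_0 = 1$ and the mean-zero property of a martingale in $\Hscr^2$. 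Otherwise, iterating the recursion reduces to lower orders: if $m \neq n$ one eventually reaches a factor $J_0 = 1$ paired with an iterated integral of positive order starting at zero, giving zero; if $m = n$, $m$ iterations produce the time-ordered integral on the right-hand side of \eqref{eq:IRmix.el}.

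The main subtlety is bridging $\Ebb[M_{m-1,s} N_{n-1,s}]$, which the induction hypothesis delivers as an explicit function of $s$, with the quantity $\Ebb[M_{m-1,u-}\, N_{n-1,u-}]$ appearing under the integral. Cauchy--Schwarz together with the submartingale property of $M_{m-1}^{\,2}$ and $N_{n-1}^{\,2}$ provides a uniform $L^2$ bound, and dominated convergence as $s \uparrow u$ converts the inductive expression into the required left-limit one. This is exactly why the upper limits in \eqref{eq:IRmix.el} are written $t_j-$ rather than $t_j$: the shape $\int_0^{u-}$ emerges naturally from the left limits arising in the recursion and allows the formula to close.
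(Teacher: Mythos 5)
Your proof is correct and follows the standard inductive argument via the characterization $\aPP{H\cdot X}{Y}=H\cdot\aPP{X}{Y}$; the paper itself gives no proof of this proposition but defers to \cite[Lemma 3.2]{DTE16}, which proceeds along essentially the same lines. The only point worth tightening is the passage from $\Ebb[M_{m-1,s}N_{n-1,s}]$ to $\Ebb[M_{m-1,u-}N_{n-1,u-}]$: a uniform $L^2$ bound on the factors only bounds the product in $L^1$, which does not by itself license dominated convergence, so one should instead invoke the integrable dominating variable $\sup_{t}|M_{m-1,t}|\cdot\sup_{t}|N_{n-1,t}|$ provided by Doob's maximal inequality together with Cauchy--Schwarz.
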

The next lemma concerns some properties of elementary iterated integrals, which will be useful in this paper. 
\begin{lemma}\label{lem:mom.it.int} Let  $\X{\al_j}\in \Hscr\p2$ have moments of every order for $j=1,\ldots,m$.

\textnormal{(i)}
  For every $F_{\otimes_m} \in \Bb_T\p {\otimes m}$ and every  $p>0$ we have the estimate
\equal \label{sup}
\Ebb\Bigg[\sup_{t\in[0,T]} \Big|J\p{\al_{1:m}}_{m}(F_{\otimes_m})_t\Big|\p{p}\Bigg]<\infty.
\tionl

\textnormal{(ii)}  Assume that  $p \geq1.$  Let $\X{\al\p n_1},\ldots,\X{\al\p n_m}  \in \Hscr\p2$ be such that 
\smallskip

\hspace{.5cm}\textnormal{(1)} $\aPP{\X{\al\p n_j}}{\X{\al\p n_k}}$ is a deterministic function for 
every $j,k=1,\ldots,m$ and  $n\ge1$;

 \hspace{.5cm}\textnormal{(2)} each  $\X{\al\p n_j}$ has finite moments of every order;

\hspace{.5cm}\textnormal{(3)}  $\X{\al\p n_j}\longrightarrow\X{\al_j}$ in $\Hscr\p{2\p m p}$, 
as $n\rightarrow+\infty$. 
\smallskip

Then,  for every $F_{\otimes_m} \in \Bb_T\p {\otimes m}$, we have
\[
\lim_{n\rightarrow+\infty}\Ebb\left[\sup_{t\in[0,T]}\Big|J_m\p{\al_{1:m}\p n}(F_{\otimes_m})_t-J_m\p{\al_{1:m}}(F_{\otimes_m})_t\Big|\p{p}\right]=0.
\]
\end{lemma}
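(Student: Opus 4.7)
The plan is to prove both parts by induction on $m$, with the Burkholder--Davis--Gundy (BDG) and Doob maximal inequalities as the essential analytic tools. For part (i), since $\|\cdot\|_p\le\|\cdot\|_q$ for $p\le q$, it suffices to treat $p\ge 2$; the case $m=0$ is trivial. For the inductive step, BDG applied to the stochastic integral
\[
J_m\p{\al_{1:m}}(F_{\otimes_m}) = \int_0\p\cdot J_{m-1}\p{\al_{1:m-1}}(F_{\otimes_{m-1}})_{u-} F_m(u)\,\rmd X\p{\al_m}_u
\]
yields
\[
\Ebb\Bigl[\sup_{t\le T} |J_m\p{\al_{1:m}}(F_{\otimes_m})_t|\p p\Bigr] \le c_p \|F_m\|_\infty\p p \,\Ebb\Bigl[\sup_{u\le T} |J_{m-1}\p{\al_{1:m-1}}(F_{\otimes_{m-1}})_u|\p p\, [X\p{\al_m}]_T\p{p/2}\Bigr],
\]
and a Cauchy--Schwarz split reduces this to $\Ebb[\sup|J_{m-1}|\p{2p}]\p{1/2}\cdot\Ebb[[X\p{\al_m}]_T\p p]\p{1/2}$. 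The first factor is finite by the induction hypothesis at exponent $2p$; the second, by another BDG and Doob on $X\p{\al_m}$, is bounded by a multiple of $\Ebb[|X\p{\al_m}_T|\p{2p}]$, finite by the moment hypothesis.

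For part (ii), I use the telescoping decomposition $J_m\p{\al_{1:m}\p n}(F_{\otimes_m}) - J_m\p{\al_{1:m}}(F_{\otimes_m}) = A_n + B_n$, where
\[
A_n := \int_0\p\cdot J_{m-1}\p{\al\p n_{1:m-1}}(F_{\otimes_{m-1}})_{u-}\, F_m(u)\,\rmd(X\p{\al_m\p n} - X\p{\al_m})_u,
\]
\[
B_n := \int_0\p\cdot \bigl(J_{m-1}\p{\al\p n_{1:m-1}}(F_{\otimes_{m-1}}) - J_{m-1}\p{\al_{1:m-1}}(F_{\otimes_{m-1}})\bigr)_{u-} F_m(u)\,\rmd X\p{\al_m}_u.
\]
The same BDG + Cauchy--Schwarz estimate as in part (i) bounds $\Ebb[\sup|A_n|\p p]$ by a constant multiple of $\Ebb[\sup|J_{m-1}\p{\al\p n_{1:m-1}}|\p{2p}]\p{1/2}\cdot\Ebb[[X\p{\al_m\p n}-X\p{\al_m}]_T\p p]\p{1/2}$. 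Tracing through the proof of (i), the first factor is bounded by a polynomial in the quantities $\Ebb[|X\p{\al\p n_j}_T|\p{2\p m p}]$ for $j=1,\dots,m-1$, hence uniformly in $n$ by assumption (3); the second tends to zero by BDG and Doob applied to the martingale $X\p{\al_m\p n}-X\p{\al_m}$ together with assumption (3), since $2p\le 2\p m p$. For $B_n$ the analogous estimate reduces matters to $\Ebb[\sup|J_{m-1}\p{\al\p n_{1:m-1}}-J_{m-1}\p{\al_{1:m-1}}|\p{2p}]\p{1/2}\cdot\Ebb[[X\p{\al_m}]_T\p p]\p{1/2}$, and the first factor vanishes as $n\to\infty$ by the induction hypothesis applied at level $m-1$ with parameter $2p$ --- an application that requires convergence of the $X\p{\al\p n_j}$ in $\Hscr\p{2\p{m-1}\cdot 2p}=\Hscr\p{2\p m p}$, matching assumption (3) exactly.

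The only subtle ingredient is the exponent accounting: each BDG step produces a $p/2$-power of a quadratic variation and then, after Cauchy--Schwarz, forces the $J_{m-1}$-factor to appear at exponent $2p$ rather than $p$. Iterating this over the $m$ levels of the recursion yields exactly the exponent $2\p m p$ appearing in hypothesis (3); verifying that the recursion closes at precisely this exponent --- neither too weak to conclude nor stronger than the hypothesis supplies --- is the one place where care is required. Once this is in place, both parts follow by a mechanical combination of BDG, Doob, and the induction hypothesis.
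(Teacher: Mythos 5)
Your argument is correct and follows essentially the same route as the paper's proof: induction on $m$, the BDG inequality combined with a Cauchy--Schwarz split that doubles the exponent at each level (yielding the $2\p m p$ in hypothesis (3)), a further BDG/Doob step to control $\Ebb\big[[\X{\al_m},\X{\al_m}]_T\p p\big]$ by $\Ebb[|\X{\al_m}_T|\p{2p}]$, and the same telescoping decomposition into the two stochastic integrals $A_n$ and $B_n$ for part (ii). The exponent bookkeeping you single out as the delicate point is exactly the point the paper's proof also hinges on, and your version of it is accurate.
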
 
\begin{proof}
We prove both the statements by induction on $m$. We start proving (i). We only consider the case $p\geq1$, since the case $0<p<1$ immediately follows by the case $p=1$ and Jensen's inequality. If $m=0$ there is nothing to prove. We now assume  that \eqref{sup} holds  for an arbitrary  $p \geq1$ and every $j\leq m,$ and we show it for $m+1$.  By Burkh\"older--Davis--Gundy's inequality, from now on BDG's  inequality, (see \cite[Theorem 2.34]{J79})
\equa
 \Ebb\Bigg[\sup_{t\in[0,T]}\Big|J\p{\al_{1:{m+1}}}_{{m+1}}(F_{\otimes_{m+1}})_t\Big|\p{p}\Bigg] 
&\leq& 
C_{p} \Ebb\Bigg[\Big[J\p{\al_{1:{m+1}}}_{{m+1}}(F_{\otimes_{m+1}}),J\p{\al_{1:{m+1}}}_{{m+1}}(F_{\otimes_{m+1}})\Big]_T\p{p/2}\Bigg] \\
&=&
C_{p}\Ebb\Bigg[\Big(J\p{\al_{1:{m}}}_{{m}}(F_{\otimes_{m}})_-\p2F_{m+1}\p2\Big)\cdot\big[\X{\al_{m+1}},\X{\al_{m+1}}\big]_T\Big)\p {p/2}\Bigg] \\
&\leq&
C_{p} c_{m+1}\p p \Ebb\Bigg[\sup_{t\in[0,T]} \Big|J\p{\al_{1:{m}}}_{{m}}(F_{\otimes_{m}})_t\Big|\p{2p}\Bigg]\p{1/2}\!\!\!
\Ebb\Big[\big[\X{\al_{m+1}},\X{\al_{m+1}}\big]_T\p{p}\Big]\p{1/2}\,,
\tion
 where ${c_{m+1}=\sup_{t \in[0,T]} |F_{m+1}(t)|<+\infty}$. Notice that 
\[\Ebb[[\X{\al_{m+1}},\X{\al_{m+1}}]_T\p{p}\Big] \le 
c_{(2p)} \Ebb[|\X{\al_{m+1}}_T|^{2p}] < \infty,
\]
where the constant $c_{(2p)}$ arises from the use  of  BDG's inequality  and Doob's martingale inequality.
Hence, the right hand side of the above estimate is finite by the induction hypothesis.  The proof of (i) is complete. Concerning (ii) we observe that
\equa
&&\Ebb\Bigg[\sup_{t\in[0,T]}\Big|J\p{\al\p n_{1:{m}}}_{{m}}(F_{\otimes_{m}})_t-J\p{\al_{1:{m}}}_{{m}}(F_{\otimes_{m}})_t\Big|\p{p}\Bigg] \\
&& \quad\leq
2\p {p-1}\Bigg\{\Ebb\Bigg[\sup_{t\in[0,T]}\bigg|\int_0\p t\Big(J\p{\al\p n_{1:{m-1}}}_{{m-1}}
(F_{\otimes_{m-1}})_{u-}-J\p{\al_{1:{m-1}}}_{{m-1}}(F_{\otimes_{m-1}})_{u-}\Big)F_m(u)\rmd \X{\al\p n_m}_u\bigg|\p p\Bigg]\\
&&\hspace{3cm}+
\Ebb\Bigg[\sup_{t\in[0,T]}\bigg|\int_0\p tJ\p{\al\p n_{1:{m-1}}}_{{m-1}}(F_{\otimes_{m-1}})_{u-}F_m(u)\rmd \big(\X{\al\p n_m}_u-\X{\al_m}_u\big)\bigg|\p p\Bigg]\Bigg\} \\
&&\quad\leq2\p {p-1}C_pc_m\p p\Bigg\{\Ebb\bigg[\sup_{t\in[0,T]}\Big|\big(J\p{\al\p n_{1:{m-1}}}_{{m-1}}(F_{\otimes_{m-1}})_{t}-J\p{\al_{1:{m-1}}}_{{m-1}}(F_{\otimes_{m-1}})_{t}\big)\Big|\p {2p}\bigg]\p{1/2}\Ebb\Big[\big[\X{\al\p n_m},\X{\al\p n_m}\big]_T\p p\Big]\p{1/2}
\\&&\hspace{3cm}+
\Ebb\bigg[\sup_{t\in[0,T]}\Big|J\p{\al\p n_{1:{m-1}}}_{{m-1}}(F_{\otimes_{m-1}})_{t}\Big|\p {2p}\bigg]\p{1/2}\Ebb\Big[ \big[\X{\al\p n_m}-\X{\al_m},\X{\al\p n_m}-\X{\al_m}\big]_T\p p \Big]\p{1/2}\Bigg\},
\tion
where we similarly as above used  BDG's inequality and H\"older's inequality. Because of the convergence assumptions, we see that $\Ebb\big[\big[\X{\al\p n_m},\X{\al\p n_m}\big]_T\p p\big]$ is bounded in $n$ and $\Ebb\big[\big[\X{\al\p n_m}-\X{\al_m},\X{\al\p n_m}-\X{\al_m}\big]_T\p p\big]$ converges to zero, as $n\rightarrow+\infty$. Furthermore, because for $m=1$ we have
\[
\Ebb\Bigg[\sup_{t\in[0,T]}\Big|J_1\p{\al\p n_{{1}}}(F_1)_t-J_1\p{\al_{1}}(F_{1})_t\Big|\p{2\p mp}\Bigg]\longrightarrow 0,\quad n\rightarrow+\infty,
\] 
we can assume, for every $k\leq m-1$,
\[
\Ebb\Bigg[\sup_{t\in[0,T]}\Big|J\p{\al\p n_{1:{k}}}_{{k}}(F_{\otimes_{k}})_t-J\p{\al_{1:{k}}}_{{k}}(F_{\otimes_{k}})_t\Big|\p{2\p{m-k}p}\Bigg]\longrightarrow 0,\quad n\rightarrow+\infty.
\]
Hence, (ii) follows by induction and the proof of the lemma is complete.
\end{proof}

As a next step we introduce some linear spaces of elementary iterated integrals and the definition of the chaotic representation property.
\begin{definition}\label{def:sp.mul.int}
 Let $\Xscr:=\{\X{\al},\, \, \al\in\Lm\}\subseteq\Hscr\p2$, where  $\Lm$  is an arbitrary parameter set. We assume that $\aPP{\X{\al}}{\X{\bt}}$ is a deterministic function, for each $\al,\bt\in\Lm$.

(i) Let $\Jscr_{0}:=\Rbb$ be the space of $0$-fold elementary iterated integrals.
  
(ii) Let $m\geq1$ and $\al_1, \ldots, \al_m\in\Lm$.  By $\Jscr_{m}^{\al_{1:m}}$ we denote the linear hull of  all $J_{m}^{\al_{1:m}}(F_{\otimes_m})$ 
 with respect to $(\X{\al_1}, \X{\al_2}, \ldots, \X{\al_m})$ from $\Hscr\p2$ and  $F_{\otimes_m} \in \Bb_T\p {\otimes m}$. 

(iii) For all $m\geq1$, we introduce
\begin{equation*}
\quad\scr J_{m}\!\!:=\!\!\Span\Big(\bigcup_{\al_{1:m}\in \Lm^m} \scr J_{m}^{\al_{1:m}}\Big),\qquad\scr J_{\rme}\!\!:=\!\!\Span\Big(\bigcup_{m\geq 0} \scr J_{m}\Big)\,,
\end{equation*}
We call $\scr J_{\rme}$ the space of elementary iterated integrals \emph{generated by} $\Xscr$.

(iv) If $\scr J_{\rme}$ is dense in $(\Hscr\p2(\Fbb),\|\cdot\|_2)$, we say that $\Xscr$ possesses the \emph{chaotic representation property} (CRP) with respect to  $\Fbb$. 
\end{definition}

\begin{remark}\label{rem:alt.rep.crp} We now briefly recall an alternative formulation of the CRP. For details we refer to \cite[Proposition 3.7 and Theorem 3.11]{DTE16}. 

(i) From Proposition \ref{prop:isom.sim.mul.int} (ii), we deduce the equivalent representation $\Jscr_\rme=\bigoplus_{m\geq0}\Jscr_m$. 
Denoting now by $\Jscr$ and $\tilde\Jscr_m$ respectively the closure of $\Jscr_\rme$ and $\Jscr_m$ in $(\Hscr\p2,\|\cdot\|_2)$, we 
see that $\Jscr=\bigoplus_{m\geq0}\tilde\Jscr_m,$ and that $\Xscr$ has the CRP if and only if $\Hscr\p2(\Fbb)=\bigoplus_{m\geq0}\tilde\Jscr_m$. 

(ii) If furthermore $\Xscr$ consists of countably many  martingales (that is, $\Lm$ is a countable index set) which are pairwise orthogonal,  we can write
\[
\Jscr=\bigoplus_{m\geq0}\bigoplus_{\al_{1:m}\in \Lm^m} \tilde\Jscr_{m}^{\al_{1:m}},
\]
$\tilde\Jscr_{m}^{\al_{1:m}}$ denoting the closure of $\Jscr_{m}^{\al_{1:m}}$ in $(\Hscr\p2,\|\cdot\|_2)$.
\end{remark}
\bigskip

We conclude this section introducing \emph{the iterated integrals} as an isometric extension of the elementary iterated integrals. For $t\in[0,T]$ and $m\geq 1$, we introduce the set
\begin{equation}\label{eq:def.Mn}
 M_{\,t}^{\, m}:=\{(t_1,\ldots,t_m): \ 0\leq t_1  <\ldots  < t_m < t\}. 
\end{equation}

 We recall the  definition of $\rho\p{\al}$   and $\rho\p{\al_{1:m}}$ given  in \eqref{rho}  and \eqref{product-rho}, respectively. 
 For $m\geq1$, we denote by $\Escr_t^m$ the linear subspace of $L^2({M}_{\,t}^{\, m}, \rho^{\al_{1:m}})$ generated by 
$F_{\otimes_m} \in \Bb_T\p {\otimes m}$ restricted to ${M}_{\,t}^{\, m}$. Notice that $\Escr_t^m$ is dense in $L^2({M}_{\,t}^{m}, \rho^{\al_{1:m}})$. 
From \eqref{eq:IRmix.el} we have the   isometry relation
\begin{equation}\label{eq:isom}
 \|J_m\p{\al_{1:m}}(F)_t\|_{L\p2(\Pbb)}=
\|F\|_{L^2({M}_{\,t}^{\, m},\, \rho^{\al_{1:m}})}\,
\end{equation} between $\{ F:= F_{\otimes_m} 1_ {{M}_{\,t}^{\, m}}:        F_{\otimes_m}   \in \Bb_T\p {\otimes m}  \}$
and $L\p2(\Pbb)$. We linearly  extend  $J_m\p{\al_{1:m}}(\cdot)_t$  to  $\Escr_t^m$, and by continuity, to $L^2({M}_{\,t}^{m}, \rho^{\al_{1:m}})$,
denoting this extension again by $J_m\p{\al_{1:m}}(\cdot)_t$.  Recall that  $J_0$  denotes the identity map on $\Rbb$.

\begin{definition}
We call the  mapping $J_m\p{\al_{1:m}}(F)$,  $F\in L^2({M}_{\,T}^{\,m}, \rho^{\al_{1:m}})$,  defined above an \emph{iterated integral} with respect to 
$(X\p{\al_1},\ldots, X\p{\al_m})$. 
\end{definition}
In the next proposition we summarize the  properties of iterated integrals.

\begin{proposition}\label{prop:isom.mul.int} 
 Let $m\geq1$, $\al_1,\ldots,\al_m\in\Lm$, $F\in L^2({M}_{\,T}^{\,m}, \rho^{\al_{1:m}})$ and let $\X{\al_1}, \ldots\ , \X{\al_n} \in \Hscr\p2$ be such that $\aPP{\X{\al_j}}{\X{\al_k}}$ is a deterministic function for $j,k=1,\ldots,m$.  \\
 \textnormal{(i)}  $J_{m}\p{\al_{1:m}}(F_{\otimes_m})$ belongs to $ \Hscr\p2$ and it is quasi-left continuous. \\
\textnormal{(ii)} Let moreover $n\geq1$, $\bt_1, \ldots, \bt_n\in \Lm$, $G\in L^2({M}_{\,T}^{\,n}, \rho^{\bt_{1:n}})$ and let $\X{\bt_1}, \ldots\ , \X{\bt_n} \in \Hscr\p2$ be such that $\aPP{\X{\bt_j}}{\X{\bt_k}}$ is a deterministic function, $j,k=1,\ldots,n$. Then, for every $t\in[0,T]$, 
we have: If $ m\neq n$, then $\Ebb\big[J_{m}^{\al_{1:m}}(F)_{\,t}J_{n}^{\bt_{1:n}}(G)_{\,t}\big]=0$, while, if $m=n$,
\equa
\E \big[J_{m}^{\al_{1:m}}(F)_{\,t}\,J_{m}^{\bt_{1:m}}(G)_{\,t}\big]
= \int_0\p t\int\p {t_{m}-}_0\!\!\cdots\int_0\p{t_2-}\!\!
F(t_1,\ldots,t_m)\,G(t_1,\ldots,t_m)\,\rmd\aPP{\X{\al_1}}{\X{\bt_1}}_{t_1}\ldots
\,\rmd\aPP{\X{\al_m}}{\X{\bt_n}}_{t_m}\,.
\tion
\end{proposition}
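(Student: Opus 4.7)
The plan is to reduce both parts of the proposition to the corresponding results for elementary iterated integrals (Proposition \ref{prop:isom.sim.mul.int}) by a density argument, with the isometry \eqref{eq:isom} as the essential bridge.

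For part (i), I would choose a sequence $F^{(n)} \in \Escr_T^m$ with $F^{(n)} \to F$ in $L^2(M_T^m, \rho^{\al_{1:m}})$. By Proposition \ref{prop:isom.sim.mul.int}(i), each $J_m^{\al_{1:m}}(F^{(n)})$ lies in $\Hscr^2$, and by the isometry \eqref{eq:isom} evaluated at $t=T$ the sequence is Cauchy in the Hilbert space $\Hscr^2$, hence converges to some $Z \in \Hscr^2$. Applying the same isometry restricted to $M_t^m$ for each $t \in [0,T]$, one identifies $Z_t$ in $L^2(\Pbb)$ with the extension $J_m^{\al_{1:m}}(F)_t$ as defined in the paper, so the \cadlag modification of $Z$ realises $J_m^{\al_{1:m}}(F)$ as an element of $\Hscr^2$. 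For quasi-left continuity, Proposition \ref{prop:isom.sim.mul.int}(ii) applied with $\bt = \al$ and $G = F^{(n)}$ shows that $\aPP{J_m^{\al_{1:m}}(F^{(n)})}{J_m^{\al_{1:m}}(F^{(n)})}$ is a deterministic, continuous function of $t$; by the characterisation recalled in Section \ref{sec:prel} each approximant is quasi-left continuous, and since the class of quasi-left continuous martingales is a closed subspace of $\Hscr^2$, the limit $Z$ inherits this property.

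For part (ii), I would similarly approximate $F$ and $G$ by elementary tensors $F^{(n)}, G^{(n)}$ in the respective $L^2$-spaces and apply Proposition \ref{prop:isom.sim.mul.int}(ii) to each pair. The left-hand side $(F,G) \mapsto \Ebb[J_m^{\al_{1:m}}(F)_t\,J_n^{\bt_{1:n}}(G)_t]$ is jointly continuous in the product $L^2$-topology by Cauchy-Schwarz in $L^2(\Pbb)$ together with \eqref{eq:isom}. In the case $m=n$, the right-hand side is also jointly continuous: by Kunita-Watanabe, $|d\aPP{\X{\al_j}}{\X{\bt_j}}| \le (d\aPP{\X{\al_j}}{\X{\al_j}}\,d\aPP{\X{\bt_j}}{\X{\bt_j}})^{1/2}$, and iterating this bound in the $m$-fold integral estimates it by $\|F\|_{L^2(M_t^m,\rho^{\al_{1:m}})}\|G\|_{L^2(M_t^m,\rho^{\bt_{1:m}})}$. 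Passing to the limit yields the orthogonality when $m \neq n$ and the isometry formula when $m=n$.

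The main subtlety I anticipate is the continuity of the right-hand side in (ii), which requires the iterated application of the Kunita-Watanabe bound to signed product covariation measures (and an implicit assumption that the mixed covariations $\aPP{\X{\al_j}}{\X{\bt_k}}$ are also deterministic, so that the product measure on $M_t^m$ is well-defined). The closedness of the quasi-left continuous subspace of $\Hscr^2$ invoked in (i) is classical, following from the orthogonal decomposition of $\Hscr^2$ into its continuous, quasi-left continuous purely discontinuous, and accessible-jump parts.
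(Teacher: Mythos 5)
Your overall strategy --- approximate $F$ by elements of $\Escr_T^m$, use the isometry \eqref{eq:isom} to get a Cauchy sequence in $\Hscr^2$, identify the limit with the extension $J_m^{\al_{1:m}}(F)$, and pass to the limit in the orthogonality and isometry relations of Proposition \ref{prop:isom.sim.mul.int} --- is exactly the argument the paper leaves implicit: the proposition is stated there without proof, as a summary of what the isometric extension delivers. Your treatment of part (ii) is sound, including the iterated Kunita--Watanabe bound giving joint $L^2$-continuity of the right-hand side and the (correct) observation that the mixed brackets $\aPP{\X{\al_j}}{\X{\bt_j}}$ must implicitly be deterministic for the stated formula to make sense.

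There is, however, one misattributed step in part (i). Proposition \ref{prop:isom.sim.mul.int}(ii) computes $\Ebb\big[J_m^{\al_{1:m}}(F^{(n)})_t^2\big]$, i.e.\ the \emph{expectation} of the predictable bracket, not the bracket itself. The bracket of an elementary iterated integral is given by \eqref{eq:pb.M},
\[
\aPP{M}{M}=\Big(J_{m-1}^{\al_{1:m-1}}(F_{\otimes_{m-1}})_-F_m\Big)^2\cdot\aPP{\X{\al_m}}{\X{\al_m}},
\]
which for $m\geq2$ is random; the paper explicitly warns, just before Assumption \ref{ass:det.com.cov}, that brackets of elements of $\Jscr_\rme$ are \emph{not} deterministic in general, so your claim that the bracket is deterministic is false. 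What you actually need, and what holds, is that this bracket is \emph{continuous} whenever $\aPP{\X{\al_m}}{\X{\al_m}}$ is continuous, and continuity of $\aPP{M}{M}$ is equivalent to quasi-left continuity of $M$ by the characterisation recalled in Section \ref{sec:prel} (this is precisely the argument in the proof of Theorem \ref{thm:com.cov.st.it.int}). Note that this requires continuity, not merely determinism, of $\aPP{\X{\al_m}}{\X{\al_m}}$ --- a hypothesis the proposition does not state explicitly but which holds in the L\'evy setting where it is applied. With this correction, your closedness argument for the quasi-left continuous martingales in $\Hscr^2$ (which is valid: $\Hscr^2$-convergence gives a.s.\ uniform convergence along a subsequence, so $\Delta Z_\tau=0$ at predictable times passes to the limit) completes part (i).
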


\subsection{Compensated-covariation stable families of martingales}
\begin{definition} 
Let $\Lm$ be an arbitrary parameter set and $\Xscr:=\{X\p{\al}, \al\in\Lm\}\subseteq\Hscr\p2$.

(i) For every $\al_1,\al_2\in \Lm$ we define the process
\begin{equation}\label{def:compcov}
X\p{\al_{1:2}}:=\cv{X\p{\al_1}}{X\p{\al_2}}-\pb{X\p{\al_1}}{X\p{\al_2}}
\end{equation}
which we call \emph{the compensated-covariation} process of $X\p{\al_1}$ and $X\p{\al_2}$. 

(ii) The family $\Xscr$ is called \emph{compensated-covariation stable} if for every $\al_1, \al_2 \in \Lm$ it holds that  $X\p{\al_{1:2}}  \in \Xscr$.

(iii) Let $\Xscr$ be a compensated-covariation stable family and let $\al_1,\ldots,\al_m\in \Lm$ with $m\geq2$. The process $X\p{\al_{1:m}}$ is defined recursively by
\begin{equation}\label{eq:it.com.cov}
X\p{\al_{1:m}}:=[X\p{\al_{1:m-1}},X\p{\al_{m}}]
-\pb{X\p{\al_{1:m-1}}}{X\p{\al_{m}}}
\end{equation}
and called $m$-fold compensated-covariation of the ordered $m$-tuple of martingales $(X\p{\al_1},\ldots,X\p{\al_m})$. We sometimes use also the notation $X\p{\al_1,\ldots,\al_m}$  instead of 
$X\p{\al_{1:m}}.$
\end{definition}

Notice that if the family $\Xscr$ is compensated-covariation stable, then  $X\p{\al_{1:m}} \in \Xscr$ holds, for every $\al_1,\ldots,\al_m$ in $\Lm$ and $m\geq2$. 

As a toy-example of a compensated-covariation stable family consider the family $\Xscr:=\{X\}$, where $X_t=N_t-\lm t $, $t\geq0$, and $N$ is a  homogeneous Poisson process with parameter $\lm$. In this case we have $\aPP{X}{X}_t=\lm t$ and
\[
[X,X]_t-\aPP{X}{X}_t=\sum_{s\leq t}(\Delta N_s)\p 2-\lm t=\sum_{s\leq t}\Delta N_s-\lm t=X_t.
\]
More generally, let $N$ be a simple point process, i.e., $N$ is a \cadlag adapted increasing process taking value in the set $\Nbb$ of natural numbers such that $N_0=0$ and $\Delta N\in\{0,1\}$. Let  $N\p p$  be the predictable compensator of $N$ and assume that $N\p p$ is continuous (i.e., that $N$ is quasi-left continuous). Then the family$\Xscr:=\{X\}\subseteq\Hscr\p2_\mathrm{loc}$, where $X:=N-N\p p$, is compensated-covariation stable.

  Next we state a representation formula for products of martingales from  $\Hscr\p2$.   Thanks to  the \emph{quasi-left continuity} which we assume here,
  the  representation formula  is  a simpler version of the one shown in
  \cite[Proposition 3.3]{DTE15}.

\begin{proposition}\label{prop:rep.pol}
Let $\Xscr:=\{X\p{\al},\ \al\in\Lm\}$ be a compensated-covariation stable family of quasi-left continuous martingales in $\Hscr\p{\,2}$. For every $N\geq1$ and $\al_1,\ldots,\al_N\in \Lm$, we have
\begin{equation}\label{eq:rep.pol}
\begin{split}
 \prod_{i=1}\p N X\p{\al_{i}}
=&
\sum_{i=1}\p N\sum_{{1\leq j_1<\ldots<j_i\leq N}}\Bigl(  \prod_{\begin{subarray}{c}k=1\\k\neq j_1,\ldots,j_i\end{subarray}}\p N X\p{\al_k}_-\Bigr)\cdot X\p{\al_{j_1:j_i}}
+
\sum_{i=2}\p N\sum_{1\leq j_1<\ldots<j_i\leq N}\Bigl(\prod_{\begin{subarray}{c}k=1\\k\neq j_1,\ldots,j_i\end{subarray}}\p N X\p{\al_k}_-\Bigr)
\cdot
\pb{X\p{\al_{j_1:j_{i-1}}}}{X\p{\al_{j_{i}}}}.
\end{split}
\end{equation}
\end{proposition}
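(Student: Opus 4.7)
The plan is to argue by induction on $N$, using repeated integration by parts, the identity $[H\cdot Y, Z] = H\cdot [Y,Z]$, and the decomposition $[X^{\al_{j_{1:i}}}, X^{\al_{N+1}}] = X^{\al_{j_1:j_i,N+1}} + \pb{X^{\al_{j_{1:i}}}}{X^{\al_{N+1}}}$ granted by compensated-covariation stability. Quasi-left continuity enters in two places: it makes every $\pb{X^{\al}}{X^{\bt}}$ continuous (hence of finite variation without jumps), and it guarantees that all the processes $X^{\al_{j_{1:i}}}$ constructed via \eqref{eq:it.com.cov} genuinely live in $\Hscr^2$.

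For the base case $N=1$ the second sum in \eqref{eq:rep.pol} is empty and the first sum collapses to $1\cdot X^{\al_1}$. For $N=2$, the classical It\^o integration-by-parts formula
\[
 X^{\al_1}X^{\al_2} = X^{\al_1}_-\cdot X^{\al_2} + X^{\al_2}_-\cdot X^{\al_1} + [X^{\al_1},X^{\al_2}],
\]
combined with $[X^{\al_1},X^{\al_2}] = X^{\al_{1:2}} + \pb{X^{\al_1}}{X^{\al_2}}$, reproduces the claimed right-hand side term by term.

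For the inductive step, write $P_N := \prod_{i=1}^N X^{\al_i}$ and apply integration by parts,
\[
 P_{N+1} = P_{N,-}\cdot X^{\al_{N+1}} + X^{\al_{N+1}}_-\cdot P_N + [P_N, X^{\al_{N+1}}].
\]
Substitute the inductive formula for $P_N$ into the last two terms. The integral $X^{\al_{N+1}}_-\cdot P_N$, expanded by associativity of stochastic integration, produces exactly the contributions indexed by ordered subsets $(j_1,\ldots,j_i)\subseteq\{1,\ldots,N+1\}$ with $N+1\notin\{j_1,\ldots,j_i\}$, for both the $X^{\al_{j_1:j_i}}$-summands and the $\pb{\cdot}{\cdot}$-summands. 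The bracket $[P_N, X^{\al_{N+1}}]$, after using linearity and $[H\cdot Y, Z] = H\cdot [Y,Z]$, reduces to sums of expressions $[X^{\al_{j_{1:i}}}, X^{\al_{N+1}}]$, which by compensated-covariation stability split into $X^{\al_{j_1:j_i,N+1}}+\pb{X^{\al_{j_{1:i}}}}{X^{\al_{N+1}}}$; this is what supplies the missing ordered subsets of $\{1,\ldots,N+1\}$ whose maximal index is $N+1$. Here quasi-left continuity is essential: it forces the predictable-bracket terms in $P_N$, being continuous of finite variation, to satisfy $[\pb{\cdot}{\cdot}, X^{\al_{N+1}}]=0$, so they contribute nothing to $[P_N, X^{\al_{N+1}}]$ and do not spoil the bookkeeping. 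Finally, the term $P_{N,-}\cdot X^{\al_{N+1}}$ supplies the singleton $(N+1)$.

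The main obstacle is the combinatorial matching. One must verify that every ordered subset $(j_1,\ldots,j_i)$ of $\{1,\ldots,N+1\}$ of every length appears exactly once on the assembled right-hand side, with coefficient $\prod_{k\notin\{j_1,\ldots,j_i\}} X^{\al_k}_-$; this is achieved by splitting the induction sums according to whether $N+1$ belongs to the subset and then checking that the three sources (the $P_{N,-}\cdot X^{\al_{N+1}}$ term, the $X^{\al_{N+1}}_-\cdot P_N$ term, and the bracket term) exhaust all possibilities without overlap. Once this bookkeeping is carried out, the formula \eqref{eq:rep.pol} at level $N+1$ follows and the induction closes.
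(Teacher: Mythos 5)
Your proof is correct and follows essentially the same route as the paper's (which is deferred to \cite[Proposition 3.3]{DTE15}): induction on $N$ via It\^o's integration-by-parts formula, the identity $[H\cdot Y,Z]=H\cdot[Y,Z]$, and the splitting $[X\p{\al_{j_1:j_i}},X\p{\al_{N+1}}]=X\p{\al_{j_1:j_i},\al_{N+1}}+\pb{X\p{\al_{j_1:j_i}}}{X\p{\al_{N+1}}}$. You also correctly identify the role of quasi-left continuity, namely that the continuous finite-variation terms $\pb{\cdot}{\cdot}$ contribute nothing to $[P_N,X\p{\al_{N+1}}]$, which is exactly the simplification over the general formula in \cite{DTE15} that the paper alludes to.
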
 

\begin{remark} For compensated-covariation stable families with deterministic point brackets,  sufficient conditions for the CRP (see Definition \ref{def:sp.mul.int} (iv)) are known. We  briefly recall these results, for an extensive study of which we refer to \cite{DTE16}. 

 The set $\Kscr$ of polynomials generated by $\Xscr$ is defined as the linear hull of products of elements of $\Xscr$ taken at different \emph{deterministic} times. Clearly, the completed $\sig$-algebra generated by $\Kscr$ coincides with $\Fscr\p\Xscr_T$. In \cite[Theorem 5.8]{DTE16},  the following result has been established: \medskip

The family  $\Xscr\subseteq\Hscr\p2(\Fbb\p\Xscr)$ possesses the CRP with respect to $\Fbb\p\Xscr$ provided that

{\rm(i)} $\Xscr$ is compensated-covariation stable;

{\rm(ii)} $\aPP{\X{\al_1}}{\X{\al_2}}$ is deterministic, for every $\al_1,\al_2\in\Lm$;

{\rm(iii)} the family $\Kscr$ of polynomials generated by $\Xscr$ is dense in $L\p2(\Om,\Fscr\p\Xscr_T,\Pbb)$.  \medskip

We recall that to ensure (iii), it is sufficient that for every $X\in\Xscr$, the random variable $|X_t|$ possesses an (arbitrarily small) exponential 
moment for every $t\in[0,T]$: That is, there exist $c\p X_t>0$ such that $\Ebb[\exp(c\p X_t|X_t|)]<+\infty$, for every $t\in[0,T]$. For an elementary proof of this well-known result see,  for example, \cite[Appendix A]{DTE15}.
\end{remark}
\subsection{Representing powers of processes}\label{subs:exa}
  As an application of Proposition \ref{prop:rep.pol} we show how to represent powers of a Brownian motion, of a homogeneous Poisson process and then, more generally, of a L\'evy process with  finite moments of every order.

\paragraph{Brownian Motion.} Let $X$ be a standard Brownian motion. Then $\Xscr=\{X\}\cup\{0\}$ is a compensated-covariation stable family. Furthermore $\aPP{X}{X}_t=t$ and $X\p{\al_{1:m}}=0$, for $m\geq2$. Therefore \eqref{eq:rep.pol} becomes
\begin{equation}\label{eq:prod.for.one.mart.bm}
X\p N_t=N \int_0\p tX_s\p {N-1}\rmd X_s+\frac{N(N-1)}{2}\,\int_0\p t X_s\p {N-2}\rmd s, 
\end{equation}
which is in fact It\^o's formula applied to $X\p N$.

\paragraph{Homogeneous Poisson Process.}  We now denote by $N$ be a homogeneous Poisson process with parameter $\lm>0$. Then, setting $X_t:=N_t-\lm t$, $\Xscr=\{X\}$ is a compensated-covariation stable family. Furthermore, $X\p{\al_{1:m}}=X$, for every $m\geq1$. From \eqref{eq:rep.pol} we deduce
\begin{equation}\label{eq:prod.for.one.mart.poi.pr}
X\p N_t=\sum_{i=1}\p N \binom{N}{i} \int_0\p t X_{s-}\p {N-i}\rmd X_s+\lm\sum_{i=2}\p N \binom{N}{i}\int_0\p t X_{s-}\p {N-i}\rmd s.
\end{equation}

\paragraph{L\'evy Processes.} 
 Let $\Fbb$ be a filtration satisfying the usual conditions and let $X$ be a L\'evy process relative to $\Fbb$,  that is: $X$ is c\`adl\`ag, stochastically continuous, $\Fbb$-adapted,  $X_{t+s}-X_s$   is independent from $\Fscr_s$ and  $X_{t+s}-X_s\sim X_t$, for all $s,t\geq0$, and $X_0=0.$ The L\'evy-It\^o decomposition of $X$ is  given by
 \begin{equation}\label{eq:leit.dec} 
X_t = \gamma t + W_t^\sigma + \int_{[0,t]\times\{|x|>1\} } x\, N(\rmd s,\rmd x) + \int_{[0,t]\times\{|x|\le1\} } x\,\tilde N(\rmd s,\rmd x),  
\end{equation} 
where $\gm\in\Rbb$, $\Ws$ denotes a Wiener process relative to $\Fbb$ with variance function $\Ebb[(\Ws_t)\p2]=\sig\p2 t$, and $N$ is the jump measure of $X$, which is a Poisson random measure relative to $\Fbb$. By $\tilde N:=N-\nu\otimes\lm_+$ we denote the compensated Poisson random measure associated with $N$, where $\lm_+$ denotes the Lebesgue measure on $[0,T]$ and $\nu$ the L\'evy measure of $X$ (i.e., $\nu$ is a $\sig$-finite measure on $\Rbb$ such that $\nu(\{0\})=0$ and $x\mapsto x\p2\wedge 1\in L\p1(\nu)$). We call $(\gm,\sig\p2,\nu)$ the characteristic triplet of $X$. \\ \smallskip

We now assume that $X$ has finite moments of every order. This  implies that, for $p_i(x):=x\p i$, we have   $p_i\in L\p2(\nu)$ for all $i\geq1$ and $p_i\in L\p1(\nu)$ for all $i\geq2$.

 As in \cite{NS00} or \cite{SU08} we now define the power-jump processes of $X$ and the family of Teugels martingales, setting $L\p{(1)}:= X$ and 
\[
 L\p{(i)}_t=\sum_{s\leq t}(\Delta X_s)\p i=\int_{[0,t]\times\Rbb}p_i(x)N(\rmd s,\rmd x),\quad n\geq2.
\]
The process $L\p{(i)}$ is called $i$-th power jump asset of $X$. It can be seen that $L\p{(i)}$ is a L\'evy process with L\'evy measure given by the image measure $\nu\p{(i)}=\nu\circ p_i\p{-1}$, $i\geq1$.  Since $\nu\p{(i)}(p_j\p2)=\nu(p_{2j})<+\infty$, for $j\ge1$, $L\p{(i)}$ has finite moments of every order, for every $i\geq1$ and
\begin{equation}\label{eq:mom.Li}
\Ebb[L_1\p{(1)}]=\gm+\nu\big(1_{\{|p_1|>1\}}p_1\big),\qquad \Ebb[L_1\p{(i)}]=\nu(p_i),\quad  i \geq2.
\end{equation}
Hence, $X\p{(i)}_t:=L_t\p{(i)}-\Ebb[L_1\p{(i)}]t  $ is a square integrable martingale for $i\geq1$ and it is of finite variation for $i\geq2$. Furthermore, the identities
$\aPP{X\p{(i)}}{X\p{(j)}}_t=(\nu(p_{i+j})+\sig\p21_{\{i=j=1\}})t$ and
\begin{equation}\label{eq:teu.mar}
[X\p{(i)},X\p{(j)}]-\aPP{X\p{(i)}}{X\p{(j)}}=X\p{(i+j)},\quad i,j\geq	1,
\end{equation}
hold. This shows that the family $\Xscr:=\{X\p{(i)}, \, i\geq1\}$ is compensated-covariation stable. It is easy to recognize that the $i$-fold compensated-covariation process of the $i$-tuple of martingales $(X\p{(1)},\ldots,X\p{(1)})$ equals $X\p{(i)}$, for every $i\geq2$. The family $\Xscr$ is the family of Teugels martingales. 
Notice that the family $\Xscr$ can be seen as the \emph{compensated-covariation stable hull} of the process $X\p{(1)}$.

As a consequence of Proposition \ref{prop:rep.pol} applied to the family $\Xscr$, with $\al_1=\ldots=\al_N=1$, we can express the $N$-th power of $X\p{(1)}$ as follows: 
\begin{equation}\label{eq:prod.for.one.mart}
\begin{split}
(X\p{(1)}_t)\p N&=\sum_{i=1}\p N \binom{N}{i} \int_0\p t(X\p{(1)}_{s-})\p {N-i}\rmd  X\p {(i)}_s+\sum_{i=2}\p N \binom{N}{i}  \int_0\p t(X\p{(1)}_{s-})\p {N-i}\rmd\aPP{X\p{(i-1)}}{X\p{(1)}}_s
\\&=\sum_{i=1}\p N \binom{N}{i} \int_0\p t(X\p{(1)})_{s-}\p {N-i}\rmd  X\p {(i)}_s+\sum_{i=2}\p N \binom{N}{i}  \int_0\p t(X\p{(1)}_{s-})\p {N-i}(\nu(p_{i})+\sig\p21_{\{i=2\}})\rmd s.
\end{split}
\end{equation}
In the same way one can obtain formulas  for the power of each Teugels martingale $X\p{(n)}$, $n\geq2$: 
\begin{equation*}
\begin{split}
(X\p{(n)}_t)\p N
& =\sum_{i=1}\p N \binom{N}{i} \int_0\p t(X\p{(n)})_{s-}\p {N-i}\rmd  X\p {(ni)}_s+\sum_{i=2}\p N \binom{N}{i} \int_0\p t (X\p{(n)}_{s-})\p {N-i}\nu(p_{ni})\rmd s.
\end{split}
\end{equation*}

\subsection{Compensated-covariation stability of \texorpdfstring{$\Jscr_\rme$}{TEXT}}
In this subsection we show that, if $\Xscr=\{\X{\al},\ \al\in\Lm\} \subseteq\Hscr\p2$ is  compensated-covariation stable and $\aPP{X\p\al}{X\p\bt}$ is deterministic  for every $\al,\bt\in\Lm$, then  $\Jscr_\rme \subseteq\Hscr\p2$ and $\Jscr_\rme$ is  compensated-covariation stable.  
In this way  we can apply \eqref{eq:rep.pol}  to compute
products of elementary iterated integrals.
Notice that, for $Y, X\in\Jscr_\rme$, the process 
$\aPP{Y}{X}$ is not  deterministic  in general.

  We shall often need the following assumption. 
\begin{assumption}\label{ass:det.com.cov}
(i) The  family $\Xscr:=\{\X{\al},\ \al\in\Lm\}\subseteq\Hscr\p2$ is compensated-covariation stable. 

(ii)  $\Xscr$ consists of \emph{quasi-left continuous} martingales. 

(iii)  $\pb{\X{\al}}{\X{\bt}}$ denotes the \emph{continuous version} of the predictable covariation between $\X{\al}$ and $\X{\bt}$  and it is a
\emph{deterministic function of time} for every $\al,\bt\in\Lm$. 
\end{assumption} 
 The following lemma will be used to show that $\Jscr_\rme$ is compensated-covariation stable.  Recall that the notation $\Bb_T$ and $\Bb_T\p{\otimes m}$ was introduced in Definition \ref{elem.it.int.} (i). 
\begin{lemma}\label{lem:com.cov.it.int} 
Let $\Xscr$ satisfy Assumption \ref{ass:det.com.cov}.  For  $J_{m}^{\al_{1:m}}(F_{\otimes_m}), J_{n}^{\bt_{1:n}}(G_{\otimes_n}) \in\Jscr_\rme$  the processes $M$ and $N$ defined by
 \[
M_t:=\int_0\p tR\p{\,m,n}_{\,u-}H(u)\rmd \X{\gm}_{\,u},\qquad
N_t:=\int_0\p t\int_0\p {s-}R\p{\,m,n}_{\,u-}K(u)\rmd\aPP{\X{\dt}}{\X{\et}}_{\,u}H(s)\rmd \X{\gm}_{\,s}\,, \quad t\in[0,T],
\]
with $R\p{\,m,n}:=J_{m}^{\al_{1:m}}(F_{\otimes_m})J_{n}^{\bt_{1:n}}(G_{\otimes_n}),$
belong to $\bigoplus_{k=0}\p{m+n+1}\scr J_{k}$, for all  $\al_1,\ldots, \al_m;\ \bt_1,\ldots, \bt_{n};\ \gamma,\dt,\et\in\Lm$; 
for all   $F_{\otimes_m} \in \Bb_T\p {\otimes m}$ and $G_{\otimes_n} \in \Bb_T\p {\otimes n}$; for  all   $H, K \in \Bb_T$  and every $m, n\in\Nbb$.
\end{lemma}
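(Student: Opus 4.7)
The plan is to prove both assertions (for $M$ and for $N$) simultaneously by induction on the total degree $p := m+n$. The base case $m=n=0$ is immediate: $R^{m,n}=1$ gives $M_t=J_1^{\gm}(H)_t\in\Jscr_1$, while by Assumption \ref{ass:det.com.cov}(iii) the function $\Phi(s):=\int_0^{s-}K(u)\,\rmd\aPP{\X{\dt}}{\X{\et}}_u$ is deterministic, bounded and measurable, so that $N_t=J_1^{\gm}(\Phi H)_t\in\Jscr_1$.

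For the inductive step, fix $(m,n)$ with $p\geq 1$ and assume the result for all smaller total degrees. By symmetry of $R^{m,n}$ in its two factors, we may assume $n\geq 1$. The central tool is It\^o's integration by parts applied to $R^{m,n}=J_m^{\al_{1:m}}(F_{\otimes_m})J_n^{\bt_{1:n}}(G_{\otimes_n})$, together with the recursion \eqref{eq:def.sim.mul.int} and the splitting $\cv{\X{\al_m}}{\X{\bt_n}}=\X{\al_m,\bt_n}+\aPP{\X{\al_m}}{\X{\bt_n}}$, where $\X{\al_m,\bt_n}\in\Xscr$ by compensated-covariation stability. This decomposes $R^{m,n}$ as a sum $I^{(1)}+I^{(2)}+I^{(3)}+I^{(4)}$, with $I^{(1)}:=\int_0^\cdot R^{m,n-1}_{v-}G_n(v)\,\rmd\X{\bt_n}_v$ and, when $m\geq 1$, $I^{(2)}:=\int_0^\cdot R^{m-1,n}_{v-}F_m(v)\,\rmd\X{\al_m}_v$, $I^{(3)}:=\int_0^\cdot R^{m-1,n-1}_{v-}(F_mG_n)(v)\,\rmd\X{\al_m,\bt_n}_v$, and $I^{(4)}:=\int_0^\cdot R^{m-1,n-1}_{v-}(F_mG_n)(v)\,\rmd\aPP{\X{\al_m}}{\X{\bt_n}}_v$.

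Substituting $R^{m,n}_{u-}=\sum_i I^{(i)}_{u-}$ into $M_t$, the first three pieces are themselves of the form $M$ at total degree $\leq p-1$, and so lie in $\bigoplus_{k=0}^{p}\Jscr_k$ by the inductive hypothesis; a further integration against $H(u)\,\rmd\X{\gm}_u$ combined with \eqref{eq:def.sim.mul.int} places their joint contribution to $M_t$ in $\bigoplus_{k=1}^{p+1}\Jscr_k$. The fourth contribution $\int_0^t I^{(4)}_{u-}H(u)\,\rmd\X{\gm}_u$ is \emph{exactly} of the form $N$ with $(m,n)$ replaced by $(m-1,n-1)$ and with parameters $K:=F_m G_n$, $\dt:=\al_m$, $\et:=\bt_n$; by induction it belongs to $\bigoplus_{k=0}^{p-1}\Jscr_k$. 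This settles the assertion for $M$.

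The claim for $N$ is the main obstacle, because substituting $R^{m,n}_{u-}$ into the inner integral of $N$ produces nested double integrals that do not directly fit the inductive form; they must first be converted back to single stochastic integrals. The essential identity, a (stochastic) Fubini computation valid because $\aPP{\X{\dt}}{\X{\et}}$ is a deterministic continuous integrator, is
\[
\int_0^{s-}\!\Bigl[\int_0^{u-}\xi_v\,\rmd\mu_v\Bigr]K(u)\,\rmd\aPP{\X{\dt}}{\X{\et}}_u=\Phi(s)\!\int_0^{s-}\xi_v\,\rmd\mu_v-\int_0^{s-}\xi_v\Phi(v)\,\rmd\mu_v,
\]
to be applied with $\mu$ ranging over the four integrators appearing in the decomposition of $R^{m,n}$. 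This reduces the contributions coming from $I^{(1)},I^{(2)},I^{(3)}$ to linear combinations of $M$-type processes of total degree $\leq p-1$ (with $H$ or $K$ possibly multiplied by the bounded deterministic function $\Phi$), and the contribution from $I^{(4)}$ to two $N$-type processes of total degree $p-2$. All of these sit in $\bigoplus_{k=0}^{p}\Jscr_k$ by induction, and one final integration against $H(s)\,\rmd\X{\gm}_s$ places $N$ in $\bigoplus_{k=0}^{p+1}\Jscr_k$, closing the coupled induction.
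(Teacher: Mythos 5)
Your proof is correct, and while it uses the same two computational ingredients as the paper --- the It\^o product decomposition of $R\p{m,n}$ into the four terms driven by $\X{\bt_n}$, $\X{\al_m}$, $\X{\al_m,\bt_n}$ and $\aPP{\X{\al_m}}{\X{\bt_n}}$, and integration by parts against the deterministic continuous function $\Phi(s)=\int_0\p sK(u)\,\rmd\aPP{\X\dt}{\X\et}_u$ to flatten the inner bracket integral --- your induction scheme is genuinely different and arguably cleaner. The paper fixes $m$ and inducts on $n$ only, which creates a problem: the term $\int_0\p\cdot R\p{m-1,n+1}_{u-}F_m(u)\,\rmd\X{\al_m}_u$ still carries the index $n+1$ and is not covered by that hypothesis, forcing the paper to iterate the integration by parts $m-1$ further times until the first factor is completely peeled off and the leading term is recognized as the single elementary iterated integral $J_{n+m+1}\p{\bt_{1:n+1},\al_{1:m}}(G_{\otimes_{n+1}}\otimes F_{\otimes_m})$. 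By inducting instead on the total order $p=m+n$ (with the $M$- and $N$-claims coupled and all parameters quantified inside the hypothesis), every one of your four pieces $I\p{(1)},\ldots,I\p{(4)}$ has strictly smaller total order, so the induction closes in a single step; the price is only the bookkeeping of tracking that the modified data ($\Phi H$, $G_n\Phi$, $F_mG_n\Phi$, the index of $\X{\al_m,\bt_n}$ in $\Lm$) stay in the admissible class $\Bb_T\times\Lm$, which you do. One small point worth making explicit in a final write-up: in your Fubini identity the boundary term should be read with the left limit, $\Phi(s)\int_0\p{s-}\xi_v\,\rmd\mu_v$, precisely so that it is a legitimate predictable integrand for the outer integral against $H(s)\,\rmd\X\gm_s$; since $\aPP{\X\dt}{\X\et}$ is continuous this costs nothing, and it matches how the paper handles the analogous step.
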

\begin{proof}
For an arbitrary $m\in\Nbb$, we prove the lemma for every $n\in\Nbb$ by induction on $n$. 
If $n=0$, then $R\p{\,m,0}=J_{m}^{\al_{1:m}}(F_{\otimes_m})$ and so $M=J_{m+1}^{\al_{1:m},\gm}(F_{\otimes_m}\otimes H)$, which clearly belongs to 
the linear space $\bigoplus_{k=0}\p{m+1}\scr J_{k}$. We show that $N$ belongs to $\bigoplus_{k=0}\p{m+1}\scr J_{k}$  if $n=0$. 
The function $\wt F(t):=\int_0\p {t} K(u) \rmd\aPP{\X{\dt}}{\X{\et}}_{\,u}$ is deterministic, bounded and, by continuity of $\aPP{\X{\dt}}{\X{\et}}$, also continuous.
By integration by parts and continuity of $\wt F$, we get
\begin{equation}\label{eq:det.part0}
\begin{split}
\int_0\p t R\p{\,m,0}_{\,u-}K(u)\rmd\aPP{\X{\dt}}{\X{\et}}_{\,u}&=R\p{\,m,0}_{\,t}\wt F(t)-\int_0\p t\wt F(u)\rmd R\p{\,m,0}_{\,u}\\&=R\p{\,m,0}_{\,t}\wt 
F(t)-\int_0\p t J_{m-1}^{\al_{1:m-1}}(F_{\otimes_{m-1}})_{u-}\wt F(u)F_{m}(u)\rmd \X{\al_m}_{\,u}
\\&=R\p{\,m,0}_{\,t}\wt F(t)-J_{n}^{\al_{1:m}}(F_{\otimes_{m-1}}\otimes F_m\wt F)_{\,t}.
\end{split}
\end{equation}
Therefore 
\begin{equation}\label{eq:det.part}
\begin{split}
\int_0\p t\int_0\p {s-}R\p{\,m,0}_{\,u-}K(u)&\rmd\aPP{\X{\dt}}{\X{\et}}_{\,u}H(s)\rmd \X{\gm}_{\,s}\\&=
\int_0\p t R\p{\,m,0}_{\,s-}\wt F(s)H(s)\rmd \X{\gm}_{\,s}-J_{m+1}^{\al_{1:m},\gm}(F_{\otimes_{m-1}}\otimes F_m\wt F\otimes H)_{\,t}
\\&=J_{m+1}^{\al_{1:m},\gm}(F_{\otimes_{m}}\otimes H\wt F)_{\,t}-J_{m+1}^{\al_{1:m},\gm}(F_{\otimes_{m-1}}\otimes F_m\wt F\otimes H)_{\,t},
\end{split}
\end{equation}
which again belongs to $\bigoplus_{k=0}\p{m+1}  \scr J_{k}$ and this proves the basis of the induction. Now we assume that 
\[
M_t:=\int_0\p tR\p{\,m,j}_{\,u-}H(u)\rmd \X{\gm}_{\,u},\qquad
N_t:=\int_0\p t\int_0\p {s-}R\p{\,m,j}_{\,u-}K(u)\rmd\aPP{\X{\dt}}{\X{\et}}_{\,u}H(s)\rmd \X{\gm}_{\,s}
\]
belong to $\bigoplus_{k=0}\p{m+j+1} \scr J_{k} $, for every $j\leq n$, every $m$, every  $H,K \in \Bb_T$ and every $\gm,\dt,\et\in\Lm$ and prove the 
statement for $n+1$. By integration by parts we have 
\begin{align}
R_t\p{\,m,n+1}&=\int_0\p t R\p{\,m-1,n+1}_{u-}F_m(u)\rmd\X{\al_m}_{u}+\int_0\p t R\p{\,m,n}_{u-}G_{n+1}(u)\rmd\X{\bt_{n+1}}_{u}
\nonumber\\&\nonumber\qquad+
\int_0\p t R\p{\,m-1,n}_{u-}F_m(u)G_{n+1}(u)\rmd[\X{\al_m},\X{\bt_{n+1}}]_{u}
\\&=
\label{eq:term0}\int_0\p t R\p{\,m-1,n+1}_{u-}F_m(u)\rmd\X{\al_m}_{u}
\\&\label{eq:term1}\qquad+
\int_0\p t R\p{\,m,n}_{u-}G_{n+1}(u)\rmd\X{\bt_{n+1}}_{u}
\\&\label{eq:term2}\qquad+
\int_0\p t R\p{\,m-1,n}_{u-}F_m(u)G_{n+1}(u)\rmd\X{\al_m,\bt_{n+1}}_{u}
\\&\label{eq:term3}\qquad+
\int_0\p t R\p{\,m-1,n}_{u-}F_m(u)G_{n+1}(u)\rmd\aPP{\X{\al_m}}{\X{\bt_{n+1}}}_{u}\,,
\end{align}
where in \eqref{eq:term2} the compensated-covariation of $\X{\al_m}$ and $\X{\bt_{n+1}}$ appears.
Because of the induction hypothesis, \eqref{eq:term1} belongs to $\bigoplus_{k=0}\p{m+n+1}\scr J_{k}$. Therefore, $\int_0\p \cdot\eqref{eq:term1}_{s-}H(s)\rmd\X{\gm}_{s}$ belongs to $\bigoplus_{k=0}\p{m+n+2}\scr J_{k}$. We now discuss \eqref{eq:term2}. The family $\Xscr$ is compensated covariation stable. Therefore, the process $\X{\al_m,\bt_{m+1}}$ belongs to $\Xscr$.
 Obviously  $F_mG_{n+1}\in \Bb_T$. Hence, by the induction hypothesis, \eqref{eq:term2} belongs to $\bigoplus_{k=0}\p{m+n}\scr J_{k}$ and therefore 
$\int_0\p \cdot\eqref{eq:term2}_{s-}H(s)\rmd\X{\gm}_{s}$ belongs to $\bigoplus_{k=0}\p{m+n+1}\scr J_{k}\subseteq\bigoplus_{k=0}\p{m+n+2}\scr J_{k}$. From the induction 
hypothesis, it also immediately follows that $\int_0\p \cdot\eqref{eq:term3}_{s-}H(s)\rmd\X{\gm}_{s}$ belongs to 
$\bigoplus_{k=0}\p{m+n}\scr J_{k}\subseteq\bigoplus_{k=0}\p{m+n+2}\scr J_{k}$. We now show that $\int_0\p\cdot\eqref{eq:term0}_{s-}H(s)\rmd \X{\gm}_s$ belongs to 
$\bigoplus_{k=0}\p{m+n+2}\scr J_{k}$. For this it suffices to verify that \eqref{eq:term0} belongs to $\bigoplus_{k=0}\p{m+n+1}\scr J_{k}$. By integration by parts applied to $R\p{\,m-1,n+1}=J_{m-1}^{\al_{1:m-1}}(F_{\otimes_{m-1}})J_{n+1}^{\bt_{1:n+1}}(G_{\otimes_{n+1}})$ we get
\begin{align}
\nonumber\int_0\p t R\p{\,m-1,n+1}_{u-}F_m(u)\rmd\X{\al_m}_{u}
=&
\int_0\p t\int_0\p{s-} R\p{\,m-2,n+1}_{u-}F_{m-1}(u)\rmd\X{\al_{m-1}}_{u}F_m(s)\rmd\X{\al_m}_{s}
\\&+\nonumber
\int_0\p t\int_0\p{s-} R\p{\,m-1,n}_{u-}G_{n+1}(u)\rmd\X{\bt_{n+1}}_{u}F_m(s)\rmd\X{\al_m}_{s}
\\&+\nonumber
\int_0\p t\int_0\p{s-} R\p{\,m-2,n}_{u-}F_{m-1}(u)G_{n+1}(u)\rmd\X{\al_{m-1},\bt_{n+1}}_{u}F_m(s)\rmd\X{\al_m}_{s}
\\&+\nonumber
\int_0\p t\int_0\p{s-} R\p{\,m-2,n}_{u-}F_{m-1}(u)G_{n+1}(u)\rmd\aPP{\X{\al_{m-1}}}{\X{\bt_{n+1}}}_{u}F_m(s)\rmd\X{\al_m}_{s}\,.
\end{align}
In other words, because of the induction hypothesis, we can rewrite this expression as 
\[
\int_0\p t R\p{\,m-1,n+1}_{u-}F_m(u)\rmd\X{\al_m}_{u}=\int_0\p t\int_0\p{s-} R\p{\,m-2,n+1}_{u-}F_{m-1}(u)\rmd\X{\al_{m-1}}_{u}F_m(s)\rmd\X{\al_m}_{s}+V_t
\]
where $V\in\bigoplus_{k=0}\p{m+n+1}\scr J_{k}$. Iterating this procedure $m-1$ times, we get
\[
\begin{split}
&\int_0\p t R\p{\,m-1,n+1}_{u-}F_m(u)\rmd\X{\al_m}_{u}=\\&U_t+\int_0\p t\int_0\p{t_{m-1}-}\cdots\int_0\p{t_2-} J_{n+1}^{\bt_{1:n+1}}( G_{\otimes_{n+1}} )_{t_1-}\;F_{1}(t_1)\rmd\X{\al_{1}}_{t_1}\ldots F_{m-1}(t_{m-1})\rmd\X{\al_{m-1}}_{t_{m-1}} F_m(t_{m})\rmd\X{\al_m}_{t_{m}},
\end{split}\]
where $U\in\bigoplus_{k=0}\p{m+n+1}\scr J_{k}$. Using the definition of  the elementary iterated integral, we get
\[ 
\int_0\p t R\p{\,m-1,n+1}_{u-}F_m(u)\rmd\X{\al_m}_{u}=J_{n+m+1}^{\bt_{1:n+1},\al_{1:m}}(G_{\otimes_{n+1}}\otimes F_{\otimes_{m}})_t+U_t
\]
which belongs to $\bigoplus_{k=0}\p{m+n+1}\scr J_{k}$ and this  completes the proof for $M$. To prove the statement for $N$ we have to verify that
\[  
N_t:=\int_0\p t\int_0\p {s-}R\p{\,m,n+1}_{\,u-}K(u)\rmd\aPP{\X{\dt}}{\X{\et}}_{\,u}H(s)\rmd \X{\gm}_{\,s}
\]
belongs to $\bigoplus_{k=0}\p{m+n+2}\scr J_{k}$, for every $m$, every $H,K \in \Bb_T$ and every $\gm,\dt,\et\in\Lm$. From the previous step, 
$\int_0\p\cdot R_{u-}\p{\,m,n+1}H(u)\rmd X\p\gm_{u}\in\bigoplus_{k=0}\p{m+n+2}\scr J_{k}$, for every $m$, every   $H \in \Bb_T$ and every $\gm\in\Lm$. 
Hence, with similar computations as in \eqref{eq:det.part0} and \eqref{eq:det.part}, using the representation of $R\p{m,n+1}$ obtained in 
\eqref{eq:term0} -- \eqref{eq:term3}, we deduce the claim from the induction hypothesis. The proof of the lemma is complete. 
\end{proof}
 We can now prove that $\Jscr_\rme$  
is compensated-covariation stable. 
\begin{theorem}\label{thm:com.cov.st.it.int}
Let $\Xscr$ satisfy Assumption \ref{ass:det.com.cov}.  Then the family $\Jscr_\rme$ of the elementary iterated integrals generated by $\Xscr$ is compensated-covariation stable and any $M\in\Jscr_\rme$ is quasi-left continuous.
\end{theorem}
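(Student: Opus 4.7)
The plan is to reduce the compensated-covariation stability of $\Jscr_\rme$ to the case of two generating elementary iterated integrals, then identify their compensated covariation as a stochastic integral of precisely the form covered by Lemma \ref{lem:com.cov.it.int}. The quasi-left continuity will be handled separately by a short induction on the order of the generator.

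Since $[\cdot,\cdot]$ and $\pb{\cdot}{\cdot}$ are bilinear, it suffices to prove that $C(Y_1,Y_2):=[Y_1,Y_2]-\pb{Y_1}{Y_2}$ lies in $\Jscr_\rme$ for two generators $Y_1=J_m\p{\al_{1:m}}(F_{\otimes_m})$ and $Y_2=J_n\p{\bt_{1:n}}(G_{\otimes_n})$. The cases $m=0$ or $n=0$ are trivial, since then one of the processes is constant and the covariation vanishes. For $m,n\ge1$, the recursive definition \eqref{eq:def.sim.mul.int} expresses $Y_1$ and $Y_2$ as stochastic integrals against $\X{\al_m}$ and $\X{\bt_n}$ respectively. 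Using the standard identities $[H\cdot M,K\cdot N]=HK\cdot[M,N]$ and $\pb{H\cdot M}{K\cdot N}=HK\cdot\pb{M}{N}$, together with $(UV)_{u-}=U_{u-}V_{u-}$ for \cadlag processes $U,V$, I obtain
\[
C(Y_1,Y_2)_{\,t}
=\int_0\p t R\p{m-1,n-1}_{u-}\,F_m(u)G_n(u)\,\rmd \X{\al_m,\bt_n}_{\,u},
\]
where $R\p{m-1,n-1}:=J_{m-1}\p{\al_{1:m-1}}(F_{\otimes_{m-1}})\,J_{n-1}\p{\bt_{1:n-1}}(G_{\otimes_{n-1}})$. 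By Assumption \ref{ass:det.com.cov}(i), the process $\X{\al_m,\bt_n}$ is itself an element of $\Xscr$, and clearly $F_mG_n\in\Bb_T$. Lemma \ref{lem:com.cov.it.int}, applied with orders $m-1$ and $n-1$, integrator index $\gm=(\al_m,\bt_n)$, and $H=F_mG_n$, then places the right-hand side in $\bigoplus_{k=0}\p{m+n-1}\Jscr_{k}\subseteq\Jscr_\rme$.

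For the quasi-left continuity I proceed by induction on the order $m$ of a generator. The case $m=0$ is trivial. For the inductive step, each generator $J_m\p{\al_{1:m}}(F_{\otimes_m})$ is the stochastic integral of a predictable process against the quasi-left continuous martingale $\X{\al_m}$ (Assumption \ref{ass:det.com.cov}(ii)); since the jumps of such a stochastic integral satisfy $\Delta(H\cdot\X{\al_m})_\tau=H_\tau\,\Delta\X{\al_m}_\tau=0$ at every predictable stopping time $\tau$, the integral is itself quasi-left continuous. Linear combinations preserve this property, so every $M\in\Jscr_\rme$ is quasi-left continuous.

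The essential technical work has already been carried out in Lemma \ref{lem:com.cov.it.int}; the step remaining here is the correct identification of $C(Y_1,Y_2)$ as a single stochastic integral against the element $\X{\al_m,\bt_n}$ of $\Xscr$. The only subtle point I anticipate is carrying out this reduction cleanly — in particular, using $(UV)_{u-}=U_{u-}V_{u-}$ to collect both left-limits into the product $R\p{m-1,n-1}_{u-}$ — after which the conclusion is an immediate application of the lemma and the theorem follows by bilinear extension to all of $\Jscr_\rme$.
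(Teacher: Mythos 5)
Your proof is correct and follows essentially the same route as the paper: both reduce by bilinearity to two generators, identify $C(Y_1,Y_2)$ as $\bigl(R^{m-1,n-1}_{-}F_mG_n\bigr)\cdot \X{\al_m,\bt_n}$ via the recursive definition of the iterated integrals, and then invoke Lemma \ref{lem:com.cov.it.int} together with the compensated-covariation stability of $\Xscr$. The only (harmless) divergence is the quasi-left-continuity step, where the paper deduces it from the continuity of $\aPP{M}{M}$ via \cite[Theorem I.4.2]{JS00}, while you argue directly from $\Delta(H\cdot \X{\al_m})_\tau=H_\tau\,\Delta \X{\al_m}_\tau=0$ at predictable times; both are standard and valid.
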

\begin{proof}
Let $M,N\in\Jscr_\rme$. By linearity we can assume without loss of generality that 
$ M=J_{m}^{\al_{1:m}}(F_{\otimes_{m}})$ and $ N=J_{n}^{\bt_{1:n}}(G_{\otimes_{n}})$.
Because of  \eqref{eq:def.sim.mul.int} we  have
\begin{equation}\label{eq:com.cov.it.int.2}
\begin{split}
[M,N]_t-\aPP{M}{N}_t&=
[J_{m}^{\al_{1:m}}(F_{\otimes_{m}}),J_{n}^{\bt_{1:n}}(G_{\otimes_{n}})]_t
-
\aPP{J_{m}^{\al_{1:m}}(F_{\otimes_{m}})}{J_{n}^{ \bt_{1:n}}(G_{\otimes_{n}})}_t
\\&=
\int_0\p t J_{m-1}^{\al_{1:m-1}}(F_{\otimes_{m-1}})_{u-}J_{n-1}^{\bt_{1
:n-1}}(G_{\otimes_{n-1}})_{u-}F_{m}(u)G_{n}(u)\rmd\X{\al_m,\bt_n}_u\,.
\end{split}
\end{equation}
Since by assumption $\Xscr$ is compensated-covariation stable, we have $\X{\al_m,\bt_n}\in\Xscr$. Hence, $F_{n}G_{m}$ being bounded,  the compensated-covariation stability of $\Jscr_\rme$ immediately follows from Lemma \ref{lem:com.cov.it.int}. To see the quasi-left continuity of $M$ it is enough to observe that
\begin{equation}\label{eq:pb.M}
\aPP{M}{M}=\big(J_{m-1}^{\al_{1:m-1}}(F_{\otimes_{m-1}})_-F_m)\p 2\cdot\aPP{X\p{\al_m}}{X\p{\al_m}}
\end{equation}
and hence $\aPP{M}{M}$ is continuous by the continuity of $\aPP{X\p{\al_m}}{X\p{\al_m}}$. The quasi-left continuity of $M$ is a consequence of the last statement of \cite[Theorem I.4.2]{JS00}. The proof of the theorem is now complete.
\end{proof}

 As a next step we study the $m$-fold compensated covariation process  built from elements of  $\Jscr_\rme$. For $J_{m}^{\al_{1:m}}(F_{\otimes_{m}}),  \, J_{n}^{\bt_{1:n}}(G_{\otimes{n}}) \in\Jscr_\rme$ we denote by 
\begin{equation*}
\mathbf J\p{\,\mathbf\al_{1:m},\mathbf\bt_{1:n}}_{m,n}( F_{\otimes_{m}}\otimes G_{\otimes{n}}):=[J_{m}^{\al_{1:m}}(F_{\otimes_m}),J_{n}^{ \bt_{1:n}}(G_{\otimes_n})]-\aPP{J_{m}^{\al_{1:m}}(F_{\otimes_m})}{J_{n}^{ \bt_{1:n}}(G_{\otimes_n})}
\end{equation*} 
their  compensated-covariation process. For any $j\in\Nbb$ we  introduce the notation 
\[
F\p j_{\otimes_m}:=F_1\p{j}\otimes\cdots\otimes F_m\p{j}.
\]
Because of Theorem \ref{thm:com.cov.st.it.int}, for $j=1,\ldots,N$, $\al_{1:m_j}\p j\in\Lm\p{m_j}$ and $J_{m_j}\p{\al_{1:m_j}\p j}(F_{\otimes_{n_j}}\p j)\in\Jscr_\rme$, we can inductively define 
\begin{equation}\label{eq:com.cov.it.in}
\begin{split}
\mathbf J\p{\,\mathbf\al_{1:m_1}\p1,\cdots,\al\p{N}_{1:m_N}}_{m_1,\ldots,m_n}&( F\p{1}_{\otimes_{m_1}}\otimes\cdots\otimes F\p{N}_{\otimes{m_N}}):=
\\&
\Big[\mathbf J\p{\,\al_{1:m_1}\p1,\ldots,\al\p{n}_{1:m_N-1}}_{m_1,\ldots,m_{N}-1}( F\p{1}_{\otimes_{m_1}}\otimes\cdots\otimes F\p{{N-1}}_{\otimes_{m_N-1}}),J_{m_N}^{\al\p N_{1:m_N}}(F\p N_{\otimes_{m_N}})\Big]
-\\&\qquad\qquad
\Big\langle\mathbf J\p{\,\al_{1:m_1}\p1,\ldots,\al\p{n}_{1:m_n-1}}_{m_1,\ldots,m_{N}-1}( F\p{1}_{\otimes_{m_1}}\otimes\cdots\otimes F\p{{N-1}}_{\otimes_{m_N-1}}),J_{m_N}^{\al\p N_{1:m_N}}(F\p N_{\otimes_{m_N}})\Big\rangle\,.
\end{split}
\end{equation}
\begin{proposition}\label{prop:com.cov.it.int.expr} Let $\Xscr$ satisfy Assumption \ref{ass:det.com.cov}.
Then 
\begin{equation}\label{eq:rep.comcov.it.int}
\mathbf J\p{\,\mathbf\al_{1:m_1}\p1,\ldots,\mathbf\al\p{N}_{1:m_N}}_{m_1,\ldots,m_N}(F\p{1}_{\otimes_{m_1}}\otimes\cdots\otimes F\p N_{\otimes_{m_N}})=\Bigg(\prod_{j=1}\p NJ_{m_j-1}^{\al\p j_{1:m_{j}-1}}( F\p j_{\otimes_{m_{j}-1}})_{-}F_{m_j}\p{j} \Bigg)\cdot\X{\al\p1_{m_1},\ldots,\al\p N_{m_N}}\,.
\end{equation}
\end{proposition}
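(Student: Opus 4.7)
The plan is to prove the representation formula by induction on $N$, using the fact established in the proof of Theorem \ref{thm:com.cov.st.it.int} together with the compensated-covariation stability of $\Xscr$.

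For the base case $N=2$, the formula is exactly what was computed in equation \eqref{eq:com.cov.it.int.2}: writing out $[M,N]-\aPP{M}{N}$ for $M=J_m^{\al_{1:m}}(F_{\otimes m})$ and $N=J_n^{\bt_{1:n}}(G_{\otimes n})$ gives the integrand $J_{m-1}^{\al_{1:m-1}}(F_{\otimes m-1})_{-}J_{n-1}^{\bt_{1:n-1}}(G_{\otimes n-1})_{-}F_m G_n$ against $d X^{\al_m,\bt_n}$, matching the right-hand side of \eqref{eq:rep.comcov.it.int} for $N=2$.

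For the induction step, I would assume the formula holds for $N-1$ and write
\[
\mathbf J^{(N-1)} := \mathbf J^{\,\al^1_{1:m_1},\ldots,\al^{N-1}_{1:m_{N-1}}}_{m_1,\ldots,m_{N-1}}(F^1_{\otimes m_1}\otimes\cdots\otimes F^{N-1}_{\otimes m_{N-1}}) = H\cdot Y,
\]
where by the induction hypothesis
\[
H:=\prod_{j=1}^{N-1} J_{m_j-1}^{\al^j_{1:m_j-1}}(F^j_{\otimes m_j-1})_{-}F^j_{m_j},\qquad Y:=\X{\al^1_{m_1},\ldots,\al^{N-1}_{m_{N-1}}}.
\]
Crucially, compensated-covariation stability of $\Xscr$ (Assumption \ref{ass:det.com.cov}(i)) ensures $Y\in\Xscr$. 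Next, using that $J_{m_N}^{\al^N_{1:m_N}}(F^N_{\otimes m_N})=K\cdot X^{\al^N_{m_N}}$ with $K:=J_{m_N-1}^{\al^N_{1:m_N-1}}(F^N_{\otimes m_N-1})_{-}F^N_{m_N}$, and the standard identities $[H\cdot Y,K\cdot Z]=HK\cdot[Y,Z]$ and $\aPP{H\cdot Y}{K\cdot Z}=HK\cdot\aPP{Y}{Z}$ for the quadratic and predictable covariation with stochastic integrals, I would compute
\[
\mathbf J^{(N)} = HK\cdot\bigl([Y,X^{\al^N_{m_N}}]-\aPP{Y}{X^{\al^N_{m_N}}}\bigr).
\]
By the very definition of the $N$-fold compensated covariation \eqref{eq:it.com.cov} applied to $Y=\X{\al^1_{m_1},\ldots,\al^{N-1}_{m_{N-1}}}\in\Xscr$ and $X^{\al^N_{m_N}}\in\Xscr$, the bracketed process equals $\X{\al^1_{m_1},\ldots,\al^N_{m_N}}$. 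Since $HK = \prod_{j=1}^N J_{m_j-1}^{\al^j_{1:m_j-1}}(F^j_{\otimes m_j-1})_{-}F^j_{m_j}$, this yields exactly \eqref{eq:rep.comcov.it.int} for $N$.

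The main (minor) obstacle is to justify the associativity identity $[H\cdot Y,K\cdot Z]-\aPP{H\cdot Y}{K\cdot Z}=HK\cdot C(Y,Z)$ in the setting where $H$ itself is a product involving left limits of elementary iterated integrals (hence predictable and locally bounded by Lemma \ref{lem:mom.it.int}(i)), and to observe that $HK$ indeed belongs to $\Lrm^2(\X{\al^1_{m_1},\ldots,\al^N_{m_N}})$ so that the final stochastic integral is well-defined in $\Hscr^2$; both follow from standard properties of stochastic integration together with the boundedness of the deterministic factors $F^j_{m_j}$ and the integrability granted by Lemma \ref{lem:mom.it.int}(i). No further delicate analysis is needed beyond carefully propagating these identities through the recursion.
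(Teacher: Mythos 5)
Your proposal is correct and follows essentially the same route as the paper: induction on $N$ with base case \eqref{eq:com.cov.it.int.2}, then applying the definition \eqref{eq:com.cov.it.in} together with the compensated-covariation stability of $\Xscr$ and the associativity identities $[H\cdot Y,K\cdot Z]=HK\cdot[Y,Z]$ and $\pb{H\cdot Y}{K\cdot Z}=HK\cdot\pb{Y}{Z}$. The paper merely states that the induction is immediate; you have filled in exactly the step it leaves implicit.
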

\begin{proof}
 For $N=2$  the statement coincides with \eqref{eq:com.cov.it.int.2}. Then, the formula immediately follows by induction from \eqref{eq:com.cov.it.in} and Theorem \ref{thm:com.cov.st.it.int}.
\end{proof}
Notice that the integrator appearing on the right-hand side of \eqref{eq:rep.comcov.it.int} is the $m$-fold compensated-covariation process of the ordered $m$-tuple of martingales $(\X{\al_{m_1}\p1},\ldots,\X{\al_{m_N}\p N} )$.

\subsection{A product formula for elementary iterated integrals} 
We now exploit \eqref{eq:rep.pol} for elementary iterated integrals generated by a compensated-covariation stable family of martingales. Observe that to prove Proposition 
\ref{prop:rep.pol} for a family of martingales $\Xscr=\{X\p\al,\ \al\in\Lm\}$, it is not needed that the predictable covariation $\aPP{X\p{\al_1}}{X\p{\al_2}}$ is deterministic but it is 
sufficient that the family $\Xscr$ is  compensated-covariation stable  and consists of quasi-left continuous martingales.   This is important because for $X,Y\in\Jscr_\rme$, the process $\aPP{X}{Y}$ is continuous but not deterministic, in general (cf.\  \eqref{eq:pb.M}).
\begin{theorem}\label{thm:prod.for.it.int} Let $\Xscr$ satisfy Assumption \ref{ass:det.com.cov}. 
 Then, for  $J_{m_j}^{\al\p{ j}_{1:m_j}}(F\p j_{\otimes_{m_j}}) \in\Jscr_\rme$,   $j=1,\ldots,N$,  $N\geq2$, we have 
\begin{equation}\label{eq:prod.for.it.int}
\begin{split}
&\prod_{j=1}\p NJ_{m_j}^{\al\p j_{1:m_j}}(F\p j_{\otimes_{m_j}})\\&=\quad
\sum_{i=1}\p N\sum_{{1\leq j_1<\ldots<j_i\leq N}}\bigg(\prod_{\begin{subarray}{c}k=1\\k\neq j_1,\ldots,j_i\end{subarray}}\p N
J_{m_k}^{\al\p k_{1:m_k}}(F\p k_{\otimes_{m_k}})_-\prod_{\ell=1}\p i J\p{\al\p {j_\ell}_{1:{m_{j_\ell}-1}}}_{{m_{j_\ell}-1}}(
F\p{j_\ell}_{\otimes{m_{j_\ell}-1}})_-F_{m_{j_\ell}}\p{j_\ell}\bigg)\cdot X\p{\al_{m_{j_1}}\p{{j_1}},\ldots,\al\p{{j_i}}_{m_{j_i}}}
\\&\quad+
\sum_{i=2}\p N\sum_{{1\leq j_1<\ldots<j_i\leq N}}\bigg(\prod_{\begin{subarray}{c}k=1\\k\neq j_1,\ldots,j_i\end{subarray}}\p N J_{m_k}^{\al\p k_{1:{m_k}}}(
F\p k_{\otimes_{m_k}})_-\prod_{\ell=1}\p i J\p{\al\p {j_\ell}_{1:{m_{j_\ell}-1}}}_{{m_{j_\ell}-1}}(F\p{j_\ell}_{\otimes_{m_{j_\ell}-1}})_-F_{m_{j_\ell}}
\p{j_\ell}\bigg)\cdot\aPP{X\p{\al_{m_{j_1}}\p{{j_1}},\ldots,\al\p{{j_{i-1}}}_{m_{j_{i-1}}}}}{X\p{\al\p{{j_i}}_{m_{j_i}}}}\,.
\end{split}
\end{equation}
\end{theorem}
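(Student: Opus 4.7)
The plan is to apply the product formula \eqref{eq:rep.pol} of Proposition \ref{prop:rep.pol} directly to the family $\Jscr_\rme$ of elementary iterated integrals, and then identify the compensated-covariation processes that appear on the right-hand side using the explicit representation in Proposition \ref{prop:com.cov.it.int.expr}. Concretely, since Assumption \ref{ass:det.com.cov} is in force, Theorem \ref{thm:com.cov.st.it.int} guarantees that $\Jscr_\rme$ is itself a compensated-covariation stable family of quasi-left continuous square integrable martingales. As observed in the paragraph preceding the theorem, the proof of Proposition \ref{prop:rep.pol} does \emph{not} use determinism of the predictable covariations, only compensated-covariation stability together with quasi-left continuity. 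Hence \eqref{eq:rep.pol} may be invoked with the family $\Jscr_\rme$ in place of $\Xscr$.

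Applied to the $N$ elementary iterated integrals $J_{m_j}^{\al^j_{1:m_j}}(F^j_{\otimes_{m_j}})$, $j=1,\ldots,N$, formula \eqref{eq:rep.pol} yields
\begin{equation*}
\begin{split}
\prod_{j=1}^N J_{m_j}^{\al^j_{1:m_j}}(F^j_{\otimes_{m_j}})
=&\sum_{i=1}^N\sum_{1\le j_1<\cdots<j_i\le N}\Bigl(\prod_{\substack{k=1\\ k\ne j_1,\ldots,j_i}}^N J_{m_k}^{\al^k_{1:m_k}}(F^k_{\otimes_{m_k}})_-\Bigr)\cdot \mathbf J_{m_{j_1},\ldots,m_{j_i}}^{\al^{j_1}_{1:m_{j_1}},\ldots,\al^{j_i}_{1:m_{j_i}}}(F^{j_1}_{\otimes_{m_{j_1}}}\otimes\cdots\otimes F^{j_i}_{\otimes_{m_{j_i}}})\\
&+\sum_{i=2}^N\sum_{1\le j_1<\cdots<j_i\le N}\Bigl(\prod_{\substack{k=1\\ k\ne j_1,\ldots,j_i}}^N J_{m_k}^{\al^k_{1:m_k}}(F^k_{\otimes_{m_k}})_-\Bigr)\cdot\bigl\langle\mathbf J_{m_{j_1},\ldots,m_{j_{i-1}}}^{\al^{j_1}_{1:m_{j_1}},\ldots,\al^{j_{i-1}}_{1:m_{j_{i-1}}}}(\cdots),J_{m_{j_i}}^{\al^{j_i}_{1:m_{j_i}}}(F^{j_i}_{\otimes_{m_{j_i}}})\bigr\rangle,
\end{split}
\end{equation*}
where the iterated compensated-covariations are those defined in \eqref{eq:com.cov.it.in}.

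The second step is the substitution of the explicit representation \eqref{eq:rep.comcov.it.int} from Proposition \ref{prop:com.cov.it.int.expr}, which identifies
\begin{equation*}
\mathbf J_{m_{j_1},\ldots,m_{j_i}}^{\al^{j_1}_{1:m_{j_1}},\ldots,\al^{j_i}_{1:m_{j_i}}}(F^{j_1}_{\otimes_{m_{j_1}}}\otimes\cdots\otimes F^{j_i}_{\otimes_{m_{j_i}}})=\Bigl(\prod_{\ell=1}^i J_{m_{j_\ell}-1}^{\al^{j_\ell}_{1:m_{j_\ell}-1}}(F^{j_\ell}_{\otimes_{m_{j_\ell}-1}})_- F^{j_\ell}_{m_{j_\ell}}\Bigr)\cdot X^{\al^{j_1}_{m_{j_1}},\ldots,\al^{j_i}_{m_{j_i}}}.
\end{equation*}
Plugging this into the first sum gives the first line of \eqref{eq:prod.for.it.int}. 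For the second sum one uses the same formula (truncated at length $i-1$) and then notes that the predictable covariation of the integral on the left with the martingale $J_{m_{j_i}}^{\al^{j_i}_{1:m_{j_i}}}(F^{j_i}_{\otimes_{m_{j_i}}})$ reduces, via the characterization of stochastic integrals through the predictable covariation (cf.\ the identity $\langle H\cdot X,Y\rangle = H\cdot\langle X,Y\rangle$ and the defining relation \eqref{eq:def.sim.mul.int} of iterated integrals), to
\begin{equation*}
\Bigl(\prod_{\ell=1}^{i-1} J_{m_{j_\ell}-1}^{\al^{j_\ell}_{1:m_{j_\ell}-1}}(F^{j_\ell}_{\otimes_{m_{j_\ell}-1}})_- F^{j_\ell}_{m_{j_\ell}}\Bigr)J_{m_{j_i}-1}^{\al^{j_i}_{1:m_{j_i}-1}}(F^{j_i}_{\otimes_{m_{j_i}-1}})_- F^{j_i}_{m_{j_i}}\cdot\langle X^{\al^{j_1}_{m_{j_1}},\ldots,\al^{j_{i-1}}_{m_{j_{i-1}}}},X^{\al^{j_i}_{m_{j_i}}}\rangle,
\end{equation*}
which is exactly the second line of \eqref{eq:prod.for.it.int}.

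The only delicate point, and thus the main obstacle, is the handling of the predictable-covariation terms: one must verify that the predictable covariation of the $(i-1)$-fold iterated compensated-covariation process (which is itself a stochastic integral against a martingale from $\Xscr$) with the iterated integral $J_{m_{j_i}}^{\al^{j_i}_{1:m_{j_i}}}(F^{j_i}_{\otimes_{m_{j_i}}})$ factors through the product of the two integrands, producing the stated predictable covariation $\langle X^{\al^{j_1}_{m_{j_1}},\ldots,\al^{j_{i-1}}_{m_{j_{i-1}}}},X^{\al^{j_i}_{m_{j_i}}}\rangle$. This follows from the characterization of $H\cdot X$ via $\langle H\cdot X,Y\rangle=H\cdot\langle X,Y\rangle$ applied to both integrals and the fact that bounded predictable processes may be pulled out of the predictable bracket; the careful bookkeeping of the indices $j_1<\cdots<j_i$ and of the products of $J$-terms is notationally heavy but conceptually routine.
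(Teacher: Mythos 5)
Your proposal is correct and follows essentially the same route as the paper's proof: invoke Theorem \ref{thm:com.cov.st.it.int} to see that $\Jscr_\rme$ is compensated-covariation stable and quasi-left continuous, apply Proposition \ref{prop:rep.pol} to this family, and then substitute the explicit representation of Proposition \ref{prop:com.cov.it.int.expr}. The handling of the predictable-covariation terms via $\pb{H\cdot X}{K\cdot Y}=HK\cdot\pb XY$, which you single out as the delicate point, is exactly the step the paper subsumes in the phrase ``the statement follows from Proposition \ref{prop:com.cov.it.int.expr}.''
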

\begin{proof}
 We have $\Jscr_\rme \subseteq\Hscr\p2$. Moreover, from Theorem \ref{thm:com.cov.st.it.int}, $M\in\Jscr_\rme$ is quasi-left continuous and   $\Jscr_\rme$ is  compensated-covariation stable. So we can introduce the 
compensated-covariation processes \eqref{eq:com.cov.it.in} and by Proposition \ref{prop:rep.pol} 
\[
\begin{split}
\prod_{j=1}\p N &J_{m_j}^{\al\p j_{1:m_j}}(F\p j_{\otimes_{m_j}})=\\&
\sum_{i=1}\p N\sum_{{1\leq j_1<\ldots<j_i\leq N}}\bigg(\prod_{\begin{subarray}{c}k=1\\k\neq j_1,\ldots,j_i\end{subarray}}\p N J_{m_k}^{\al\p k_{1:{m_k}}}\big(F\p k_{\otimes_{m_k}}\big)_-\bigg)
\cdot\mathbf J\p{\al_{1:m_{j_1}}\p{j_1},\ldots,\mathbf\al\p{j_i}_{1:m_{j_i}}}_{m_{j_1},\ldots,m_{j_i}}\big(F_{\otimes_{m_{j_1}}}\p{j_1}\otimes\cdots\otimes F\p{j_i}_{\otimes_{m_{j_i}}}\big)
\\&+
\sum_{i=2}\p N\sum_{{1\leq j_1<\ldots<j_i\leq N}}\bigg(\prod_{\begin{subarray}{c}k=1\\k\neq j_1,\ldots,j_i\end{subarray}}\p N J_{m_k}^{\al\p k_{1:{m_k}}}\big(F\p k_{\otimes_{m_k}}\big)_-\bigg)\cdot
\\&
\hspace{4cm}\cdot\bigg\langle\mathbf J\p{\al_{1:m_{j_1}}\p{j_1},\ldots,\al\p{j_{i-1}}_{1:m_{j_{i-1}}}}_{m_{j_1},\ldots,m_{j_{i-1}}}\big(F_{\otimes{m_{j_1}}}\p{j_1}\otimes\cdots\otimes 
F\p{j_{i-1}}_{\otimes_{m_{j_{i-1}}}}\big),J_{m_{j_i}}^{\al\p {j_i}_{1:{m_{j_i}}}}\big(F\p {j_i}_{\otimes_{m_{j_i}}}\big)\Big\rangle\,.
\end{split}
\]
The statement follows from Proposition \ref{prop:com.cov.it.int.expr}.
\end{proof}
In the next step we obtain a recursive formula for computing moments of products from $\Jscr_\rme$.  To ensure their existence we make the following assumption:
\begin{assumption}\label{ass:mom.mart}
Each martingale in $\Xscr$ has finite moments of every order.
\end{assumption}
Let $\Xscr$ satisfy Assumption \ref{ass:det.com.cov}. We observe that then, according to \cite[Corollary 4.5]{DTE16}, $\Xscr$ fulfils Assumption \ref{ass:mom.mart} if there exists $\bt\in\Lm$ such that $\aPP{\X{\bt}}{\X{\bt}}_t<\aPP{\X{\bt}}{\X{\bt}}_T$, $t<T$.

Before we come to the moment formula we need the following technical result. For the proof see \cite[Lemma 5.5]{DTE16}.
\begin{lemma}\label{lem:int.prod.X}
Let  $\Xscr:=\{X\p{\al},\ \al\in \Lm\} \subseteq\Hscr\p{\,2}$ satisfy Assumption \ref{ass:mom.mart} and let $A$ be a deterministic process of finite variation. We define  the process $K$ by
\begin{equation*}
K:=\prod_{i=1}\p NX\p{\al_i}_{-},\quad \al_i\in \Lm,\quad i=1,\ldots,N.
\end{equation*}
Then the process $K\cdot A$ is of integrable variation.
\end{lemma}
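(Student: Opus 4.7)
The plan is to unwind the definition of "integrable variation": the process $K\cdot A$ is of integrable variation if and only if $\Ebb\big[\int_0^T|K_{s-}|\,\rmd|A|_s\big]<+\infty$, where $|A|$ denotes the total variation of $A$. Since $A$ is deterministic and of finite variation, $|A|$ is a deterministic finite measure on $[0,T]$, which allows the use of Fubini's theorem to interchange expectation and integration:
\begin{equation*}
\Ebb\bigg[\int_0^T|K_{s-}|\,\rmd|A|_s\bigg]=\int_0^T\Ebb\big[|K_{s-}|\big]\,\rmd|A|_s.
\end{equation*}

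The next step is to bound $\Ebb[|K_{s-}|]$ uniformly in $s\in[0,T]$. By the generalized H\"older inequality with exponents $N,\ldots,N$,
\begin{equation*}
\Ebb\big[|K_{s-}|\big]=\Ebb\bigg[\prod_{i=1}^N|X^{\al_i}_{s-}|\bigg]\leq\prod_{i=1}^N\Ebb\big[|X^{\al_i}_{s-}|^N\big]^{1/N}.
\end{equation*}
Since $x\mapsto|x|^N$ is convex, each $|X^{\al_i}|^N$ is a non-negative \cadlag submartingale, so $\Ebb[|X^{\al_i}_{s-}|^N]\leq\Ebb[|X^{\al_i}_{s}|^N]\leq\Ebb[|X^{\al_i}_{T}|^N]$. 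By Assumption \ref{ass:mom.mart}, each of these terminal moments is finite, hence
\begin{equation*}
\sup_{s\in[0,T]}\Ebb\big[|K_{s-}|\big]\leq\prod_{i=1}^N\Ebb\big[|X^{\al_i}_{T}|^N\big]^{1/N}<+\infty.
\end{equation*}

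Combining both displays with the finiteness of the total variation $|A|_T$ yields
\begin{equation*}
\Ebb\bigg[\int_0^T|K_{s-}|\,\rmd|A|_s\bigg]\leq|A|_T\prod_{i=1}^N\Ebb\big[|X^{\al_i}_{T}|^N\big]^{1/N}<+\infty,
\end{equation*}
which is the desired conclusion. There is no real obstacle here: the whole argument boils down to combining Fubini (enabled by the determinism of $A$) with a H\"older estimate and the submartingale inequality applied to $|X^{\al_i}|^N$, all of which are routine once Assumption \ref{ass:mom.mart} is in place. The only point that deserves explicit mention is that the left limit $X^{\al_i}_{s-}$ is handled via the submartingale property on the closure, so that no regularity issue arises at jumps of the martingales.
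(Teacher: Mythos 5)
Your argument is correct and complete. The paper itself does not prove this lemma but defers to \cite[Lemma 5.5]{DTE16}; your route --- Tonelli (justified by $A$ being deterministic and of finite variation, so that $|A|$ is a finite measure and $K$ is jointly measurable as a c\`agl\`ad adapted process), the generalized H\"older inequality with $N$ exponents equal to $N$, and the submartingale bound $\Ebb[|X\p{\al_i}_{s-}|\p N]\leq\Ebb[|X\p{\al_i}_T|\p N]$ --- is exactly the standard argument one expects here. The only cosmetic point is that $K$ is already defined as the product of the left-limit processes, so $K_s=\prod_i X\p{\al_i}_{s-}$ and writing $K_{s-}$ is redundant (though harmless, since $K$ is left-continuous).
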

Let $A$ and $K$ be as in Lemma \ref{lem:int.prod.X}. Then by $\Ebb[K]\cdot A$ we denote the integral with respect to $A$ of the $A$-integrable function $t\mapsto\Ebb[K_t]$.

 Now we are ready to state and prove a recursive moment formula for products of elementary iterated integrals.
\begin{corollary}\label{cor:mom.it.int}
Let $\Xscr$ satisfy Assumptions \ref{ass:det.com.cov} and \ref{ass:mom.mart}. Then, for  $J_{m_j}^{\al\p{ j}_{1:m_j}}(F\p j_{\otimes_{m_j}}) \in\Jscr_\rme$,   $j=1,\ldots,N$,  $N\geq2$,  the following formula holds:
\begin{equation}\label{eq:mom.it.int}
\begin{split}
&\Ebb\Bigg[\prod_{j=1}\p NJ_{m_j}^{\al\p j_{1:{m_j}}}(F\p j_{\otimes_{m_j}})\Bigg]=
\\&
\quad\sum_{i=2}\p N\sum_{{1\leq j_1<\ldots<j_i\leq N}}\Ebb\left[\prod_{\begin{subarray}{c}k=1\\k\neq j_1,\ldots,j_i\end{subarray}}\p N 
J_{m_k}^{\al\p k_{1:{m_k}}}(F\p k_{\otimes_{m_k}})_-\prod_{\ell=1}\p i J\p{\al\p {j_\ell}_{1:{m_{j_\ell-1}}}}_{{m_{j_\ell-1}}}
(F\p{j_\ell}_{\otimes_{{m_{j_\ell}-1}}})_-F_{m_{j_\ell}}\p{j_\ell}\right]\cdot
\aPP{X\p{\al_{m_{j_1}}\p{{j_1}},\ldots,\al\p{{j_{i-1}}}_{m_{j_{i-1}}}}}{\X{\al\p{{j_i}}_{m_{j_i}}}}.
\end{split}
\end{equation}
\end{corollary}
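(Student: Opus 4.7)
The plan is to take expectations on both sides of the product formula \eqref{eq:prod.for.it.int} established in Theorem \ref{thm:prod.for.it.int}. That formula decomposes $\prod_{j=1}^N J_{m_j}^{\al^j_{1:m_j}}(F^j_{\otimes_{m_j}})$ into two sums: the first is a sum of stochastic integrals against compensated-covariation martingales $X^{\al^{j_1}_{m_{j_1}},\ldots,\al^{j_i}_{m_{j_i}}}$, while the second consists of integrals against the predictable covariations $\aPP{X^{\al^{j_1}_{m_{j_1}},\ldots,\al^{j_{i-1}}_{m_{j_{i-1}}}}}{X^{\al^{j_i}_{m_{j_i}}}}$, which are deterministic and continuous by Assumption \ref{ass:det.com.cov}(iii). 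If the first sum has zero expectation and the expectation can be interchanged with the integral in the second sum, we obtain \eqref{eq:mom.it.int}.

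First I would show that each stochastic integral in the first sum of \eqref{eq:prod.for.it.int} is a true uniformly integrable martingale starting at zero, so that its expectation vanishes. By Theorem \ref{thm:com.cov.st.it.int}, the family $\Jscr_\rme$ is contained in $\Hscr^2$ and is compensated-covariation stable; since $\Xscr$ itself is compensated-covariation stable, the iterated compensated-covariation $X^{\al^{j_1}_{m_{j_1}},\ldots,\al^{j_i}_{m_{j_i}}}$ belongs to $\Xscr\subseteq\Hscr^2$. The integrand is a product of left limits of elementary iterated integrals and of a bounded deterministic function; Lemma \ref{lem:mom.it.int}(i) together with Assumption \ref{ass:mom.mart} and H\"older's inequality ensures that this integrand lies in $\Lrm^2$ of the integrator, so the stochastic integral belongs to $\Hscr^2$ and has zero expectation.

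Next, for the second sum, the integrator is a deterministic continuous function of finite variation by Assumption \ref{ass:det.com.cov}(iii), and the integrand is a product of left limits of elementary iterated integrals times a bounded deterministic factor $F_{m_{j_\ell}}^{j_\ell}$. Hence Lemma \ref{lem:int.prod.X} applies (with $A$ equal to the relevant predictable covariation and $K$ the product of left limits): the integrand times $\rmd A$ is of integrable variation, so by Fubini's theorem the expectation can be pulled inside the integral, yielding exactly the right-hand side of \eqref{eq:mom.it.int}.

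The main obstacle is the integrability verification that certifies the first sum is a genuine martingale rather than merely a local martingale. This reduces to controlling $\Ebb\bigl[\bigl(\prod_k J_{m_k}^{\al^k_{1:m_k}}(F^k_{\otimes_{m_k}})_-\prod_\ell J^{\al^{j_\ell}_{1:m_{j_\ell}-1}}_{m_{j_\ell}-1}(F^{j_\ell}_{\otimes_{m_{j_\ell}-1}})_-\bigr)^2\cdot[X^{\al^{j_1}_{m_{j_1}},\ldots,\al^{j_i}_{m_{j_i}}},X^{\al^{j_1}_{m_{j_1}},\ldots,\al^{j_i}_{m_{j_i}}}]_T\bigr]$, which is finite by Cauchy--Schwarz, Lemma \ref{lem:mom.it.int}(i) applied to each iterated integral factor at any finite order, BDG's inequality and Doob's inequality applied to $X^{\al^{j_1}_{m_{j_1}},\ldots,\al^{j_i}_{m_{j_i}}}\in\Hscr^2$, combined with the finite-moments-of-every-order Assumption \ref{ass:mom.mart}. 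Once this is in place, the expectation of the first sum vanishes and Fubini for the second sum finishes the argument.
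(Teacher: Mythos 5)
Your proposal is correct and follows essentially the same route as the paper: take expectations in Theorem \ref{thm:prod.for.it.int}, kill the first sum by verifying that each stochastic integral is a genuine $\Hscr\p2$-martingale started at zero (the paper does the $\Lrm\p2$-membership via Lemma \ref{lem:int.prod.X} applied to $\Jscr_\rme$, which inherits Assumption \ref{ass:mom.mart} by Lemma \ref{lem:mom.it.int}, whereas you do it by hand with H\"older and Lemma \ref{lem:mom.it.int}(i) --- equivalent since the point brackets are deterministic), and apply Fubini to the second sum using Lemma \ref{lem:int.prod.X}. No gap.
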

\begin{proof}
Because of Lemma \ref{lem:mom.it.int}, if $\Xscr$ satisfies Assumption \ref{ass:mom.mart}, the family $\Jscr_\rme$ of elementary iterated integrals also does. Therefore the left-hand side of \eqref{eq:prod.for.it.int} is integrable. Furthermore, because  $\aPP{\X{\al}}{\X{\al}}$ is deterministic, for every  $\al \in\Lm$, we conclude by  Lemma \ref{lem:int.prod.X} that the integrands of the stochastic integrals in the first term on the right hand side of \eqref{eq:prod.for.it.int} belong to $\Lrm\p2(\X{\al})$, for every $\al\in\Lm$. Therefore each summand in the first term on the right-hand side of \eqref{eq:prod.for.it.int} belongs to $\Hscr\p2$. Analogously,  we have  by Lemma \ref{lem:int.prod.X} and Assumption \ref{ass:det.com.cov} that the second term on the right-hand side of \eqref{eq:prod.for.it.int} is integrable. We can therefore consider the expectation and apply the theorem of Fubini to conclude the proof. 
\end{proof}

 Notice that  by  taking $m_j=1$ and $F_1\p j=1$, $j=1,\ldots,N$  in \eqref{eq:mom.it.int} one can \emph{recursively} compute  expressions like  $ \Ebb[\prod_{j=1}\p N X\p{\al_j}]$. Indeed, in this special case, the second product on the right-hand side of \eqref{eq:mom.it.int} is identically equal to one, while the first product consists of a number of factors which is strictly smaller than $N$.  For example, if $X$ is a Brownian motion,  \eqref{eq:mom.it.int} corresponds to taking the expectation in \eqref{eq:prod.for.one.mart.bm}
 and  we get
$\Ebb[X\p {2N+1}_t]=0$, $N\geq0$, and    
\[
\Ebb[X_t\p {2N}]=(2N-1)!!(\sqrt{t})\p{2N},\quad N\geq2,
\]
where $N!!$ denotes the \emph{double factorial} of $N$. 

If $X$ is a compensated Poisson process with parameter $\lm$, then by  taking the expectation in \eqref{eq:prod.for.one.mart.poi.pr}  (or by simplifying \eqref{eq:mom.it.int})
we obtain
\[
\Ebb[X_t\p N]=\lm \sum_{i=2}\p N\binom{N}{i} \int_0\p t\Ebb[(X_{s})]\p {N-i}\rmd s
\] 
which is the formula in \cite[Proposition 3.3.4 and Example 3.3.5]{PT11}.  The resulting  polynomials 
\[\begin{split}
&\Ebb[X_t\p2]=\lm t,\quad  \Ebb[X_t\p3]=\lm t ,\quad   \Ebb[X_t\p4]=3\lm\p2 t\p2+\lm t,\quad   \Ebb[X_t\p5]=10\lm\p2 t\p2+\lm t\ldots,
\end{split}
\]
also called \emph{centred Touchard polynomials}.   

 If, more generally, $X$ is a L\'evy process (see  Subsection \ref{subs:exa}) with finite moments of every order, we  derive  from Corollary \ref{cor:mom.it.int} the \emph{recursive} formula \eqref{thm:mom.lev} below for the central moments of $X$. This formula allows to compute the moments of a L\'evy process recursively without  taking derivatives of its characteristic function. 
\begin{theorem}\label{thm:mom.lev} Let $X$ be a L\'evy process with characteristic triplet $(\gm,\sig\p2,\nu)$, such that $X$ has finite moments of every order. Let us define the centred process $X\p{(1)}$ by $X\p{(1)}_t:=X_t-\Ebb[X_t]$. Then, for every $N\geq1$, we have
\begin{equation} \label{eq:mom.lev.cen}
\Ebb[(X\p{(1)}_t)\p N]=\sum_{i=2}\p N \binom{N}{i}  (\nu(p_{i})+\sig\p21_{\{i=2\}}) \int_0^t \Ebb[(X\p{(1)}_{s-})\p {N-i}]\rmd s.
\end{equation}
Consequently, for $N\geq1$, the non-central moments of $X$ are given by
\begin{equation}\label{eq:mom.lev.ncen}
\begin{split}
\Ebb[X\p N_t]
=\sum_{i=1}\p N\binom{i}{k}\big((\gm+\nu(1_{\{|p_1|>1\}}))t\big)\p{N-i}\sum_{k=1}\p i \binom{i}{k}(\nu(p_{k})+\sig\p21_{\{k=2\}}) \int_0^t \Ebb[(X\p{(1)}_{s-})\p {i-k}]\rmd s.
\end{split}
\end{equation}
\end{theorem}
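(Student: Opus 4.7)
The plan is to derive \eqref{eq:mom.lev.cen} directly from the product formula \eqref{eq:prod.for.one.mart} already obtained for the Teugels martingale family, and then to deduce \eqref{eq:mom.lev.ncen} by binomial expansion around the mean.

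First I would invoke the setup of Subsection \ref{subs:exa}. Since $X$ has finite moments of every order, we have $p_i\in L^2(\nu)$ for $i\ge1$ and $p_i\in L^1(\nu)$ for $i\ge2$, so the Teugels family $\Xscr=\{X^{(i)},\ i\ge1\}$ is well-defined, satisfies Assumption \ref{ass:det.com.cov} (compensated-covariation stable by \eqref{eq:teu.mar}, quasi-left continuous, deterministic brackets $\aPP{X^{(i)}}{X^{(j)}}_t=(\nu(p_{i+j})+\sigma^2 1_{\{i=j=1\}})t$), and Assumption \ref{ass:mom.mart}. The centred process $X^{(1)}_t = X_t-\Ebb[X_t]$ coincides with the first Teugels martingale, using \eqref{eq:mom.Li}.

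Next I would take expectations on both sides of \eqref{eq:prod.for.one.mart}. The first sum is a sum of stochastic integrals with respect to the martingales $X^{(i)}$; invoking Lemma \ref{lem:int.prod.X} together with Assumption \ref{ass:mom.mart} shows that each integrand $(X^{(1)}_{s-})^{N-i}$ lies in $L^2(X^{(i)})$, so every summand is in $\Hscr^2$ and has expectation zero. The second sum is a deterministic time integral of $(X^{(1)}_{s-})^{N-i}(\nu(p_i)+\sigma^2 1_{\{i=2\}})$, which is integrable again by Lemma \ref{lem:int.prod.X}. Applying Fubini yields
\[
\Ebb[(X^{(1)}_t)^N]=\sum_{i=2}^N\binom{N}{i}(\nu(p_i)+\sigma^2 1_{\{i=2\}})\int_0^t\Ebb[(X^{(1)}_{s-})^{N-i}]\,\rmd s,
\]
which is \eqref{eq:mom.lev.cen}. (Alternatively, one can derive the same identity by applying Corollary \ref{cor:mom.it.int} with $m_j=1$, $F^j_1\equiv1$ and all $\alpha_j$ equal to the label of $X^{(1)}$, since in that special case the second product on the right-hand side of \eqref{eq:mom.it.int} collapses to $1$ and the iterated compensated covariations reduce to $X^{(i)}$.)

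For \eqref{eq:mom.lev.ncen}, I would write $X_t=X^{(1)}_t+ct$ with $c:=\gamma+\nu(1_{\{|p_1|>1\}}p_1)=\Ebb[L^{(1)}_1]$ and expand $X_t^N=\sum_{i=0}^N\binom{N}{i}(ct)^{N-i}(X^{(1)}_t)^i$ by the binomial theorem. Taking expectations and substituting \eqref{eq:mom.lev.cen} for $\Ebb[(X^{(1)}_t)^i]$ yields the nested sum in \eqref{eq:mom.lev.ncen} (the $i=0$ term vanishes because $\Ebb[X^{(1)}_t]=0$, explaining why the outer index starts at $1$).

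The only nontrivial step is the interchange of expectation with the stochastic integrals in \eqref{eq:prod.for.one.mart}; everything else is an application of the identities already established. This integrability is precisely what Lemma \ref{lem:int.prod.X} and Assumption \ref{ass:mom.mart} are designed to guarantee, so no genuine obstacle arises.
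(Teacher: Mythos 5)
Your proof is correct and follows essentially the same route as the paper: there, \eqref{eq:mom.lev.cen} is obtained by applying Corollary \ref{cor:mom.it.int} to the Teugels family via the identity $(X\p{(1)}_t)\p N=\prod_{j=1}\p N J_1\p{1}(1)_t$, which is exactly the ``take expectations in \eqref{eq:prod.for.one.mart} and kill the martingale terms using Lemma \ref{lem:int.prod.X}'' computation you carry out (and which you yourself note as the equivalent alternative), and \eqref{eq:mom.lev.ncen} is derived by the same binomial expansion around $\Ebb[X_t]$. One small slip, shared with the paper's own display: in the binomial expansion it is the $i=1$ term that vanishes because $\Ebb[X\p{(1)}_t]=0$, whereas the $i=0$ term equals $(\Ebb[X_t])\p N$ and does not vanish in general.
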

\begin{proof}
The family $\Xscr:=\{X\p{(i)}, \, i\geq1\}$ defined by \eqref{eq:teu.mar} is compensated-covariation stable, consists of quasi-left continuous martingales and is such that $\aPP{X\p{(i)}}{X\p{(j)}}$ is deterministic.  Hence, $\Xscr$ satisfies Assumptions \ref{ass:det.com.cov}. Furthermore, because of the last paragraph in Subsection \ref{subs:exa}, $\Xscr$ satisfies also Assumption \ref{ass:mom.mart}. Hence, from the identity $(X\p{(1)}_t)\p N=\prod_{j=1}\p N J_1\p{1}(1)_t$, it follows that \eqref{eq:mom.lev.cen} is a special case of Corollary \eqref{cor:mom.it.int}. To see \eqref{eq:mom.lev.ncen}, we observe that the identity $X_t=X\p{(1)}_t+\Ebb[X_t]$ holds. Thus, 
$
\Ebb[X\p N]=\sum_{i=1}\p N \binom{N}{i}\Ebb[(X\p{(1)}_t)\p i]\Ebb[X_t]\p{N-i}
$
and \eqref{eq:mom.lev.ncen} immediately follows from \eqref{eq:mom.lev.cen} because of \eqref{eq:mom.Li}. The proof is complete.
\end{proof}

\section{Iterated integrals with respect to L\'evy processes}\label{sec:Lev.pr}
In this section $X$ is a L\'evy process with characteristic triplet $(\gm,\sig\p2,\nu),$ and we denote by $N$ the jump measure of $X$ and by $\tilde N:=N-\nu\otimes\lm_+$ the compensated Poisson random measure. 
For  $\al\in L\p2(\nu)$  and $t\geq0$ we use the notation 
$$
1_{ [0, t]}\al \ast  \tilde N:=\int_{[0,t]\times \Rbb } \al(x)\tilde N(\rmd s,\rmd x).
$$
 We set $\mu := \sigma^2 \delta_0 + \nu$, where $\delta_0$ is the Dirac measure concentrated in zero. For $ \al \in L\p2(\mu)$, we define
\equal \label{poisson-integral}
   X\p\al:=\al(0) W^\sigma +(1_{ [0, \cdot]\times (\Rbb \setminus \{ 0\}) }\al) \ast  \tilde N.
\tionl

We recall that, if $\al\in L\p2(\mu)$, then the process $X\p\al$ has the following properties:
\bigskip 

(i) $(X^\al,\Fbb)$ is a L\'evy process with characteristic triplet  $(-\int_{\{|x|>1\}} \al(x) \nu(dx), \al(0)^2\sigma^2, \nu \circ \al^{-1} )$.

(ii) $X^\al\in\Hscr\p2(\Fbb)$  and $\aPP{X\p\al}{X\p\al}_t=t\mu(\al\p2)$.

(iii) $\Delta X^\al=\al(\Delta X)1_{\{\Delta X\neq0\}} $ and hence $\Delta X^\al$ is bounded, if $\al$ is bounded.

(iv) Let $\bt\in L\p2(\mu)$. Then $\aPP{X\p\al}{X\p\bt}=0$ if and only if $\mu(\al\bt)=0$.
\bigskip 

For a system $\Lm \subseteq L\p2(\mu)$, we put

\equal \label{X-lambda}
\Xscr_\Lm:=\{X\p \al,\,\, \al\in \Lm\}.
\tionl

\begin{assumption}\label{ass:good.sys} 
Let $\Lm$ be a set of real-valued functions with the following properties: \\
(i) $\Lm\subseteq L\p{\,1}(\mu)\cap L\p{\,2}(\mu)$; \\
(ii) $\Lm$ is total in $L\p{\,2}(\mu)$; \\
(iii) $\Lm$ is stable under multiplication, and $1_{\Rbb\setminus\{0\}}\al\in\Lm$ whenever $\al\in\Lm$; \\
(iv) $\Lm$ is a system of bounded functions. 
\end{assumption}
We observe that a system $\Lm$ satisfying Assumption \ref{ass:good.sys} always exists: Obviously, we can choose 
$\Lm:=\{\al=c1_{\{0\}}+1_{(a,\,b]},\ a,b\in\Rbb:a<b,\ 0\notin[a,b];\ c\in\Rbb\}\cup\{0\}$ as an example.

\begin{proposition}  
Let  $\Lm$ satisfy Assumption \ref{ass:good.sys}. Then 
$\Xscr_\Lm \subseteq \Hscr\p{\,2}(\Fbb)$. For $\al_1,\ldots,\al_m  \in \Lm$ 
 the   compensated-covariation process $X^{\al_{1:m}}$, $m\ge 2$,  has the following form:
\equal \label{co-co stable processes}
    X^{\al_{1:m}} = \Bigl (1_{[0, \cdot]\times(\Rbb \setminus \{ 0\})} \prod_{k=1}^m\al_k  \Bigr ) \ast  \tilde N.  
\tionl
Moreover, \\
{\rm(i)} $\Fbb\p{\Xscr_\Lm}=\Fbb\p X$;\\
{\rm(ii)} $\Ebb[\exp(\lm|X_{\,t}|)]<+\infty$ for every $X\in\Xscr_\Lm$, $\lm>0$, $t\in[0,T]$, \\
{\rm (iii)} $\pb XY$ is deterministic for every $X,Y\in\Xscr_\Lm$, \\
{\rm (iv)} $\Xscr_\Lm$ possesses the CRP with respect to $\Fbb\p X$.
\end{proposition}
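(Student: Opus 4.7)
My plan is to first establish the explicit formula \eqref{co-co stable processes} by induction on $m$, and then to deduce (i)--(iv) from it together with the four properties of $X\p\al$ listed immediately after \eqref{poisson-integral}. The inclusion $\Xscr_\Lm \subseteq \Hscr\p2(\Fbb)$ is just property (ii) of $X\p\al$, which applies to any $\al \in L\p2(\mu)$.

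For the base case $m=2$ of \eqref{co-co stable processes} I compute directly. The continuous martingale parts of $X\p{\al_1}$ and $X\p{\al_2}$ are $\al_1(0)\Ws$ and $\al_2(0)\Ws$ respectively, and their jumps are $\al_i(\Delta X)1_{\{\Delta X \neq 0\}}$ by property (iii). Hence
\[
[X\p{\al_1}, X\p{\al_2}]_t = \al_1(0)\al_2(0)\sig\p2 t + \sum_{0 < s \le t}\al_1(\Delta X_s)\al_2(\Delta X_s)1_{\{\Delta X_s \neq 0\}},
\]
while polarising property (ii) gives $\pb{X\p{\al_1}}{X\p{\al_2}}_t = t\mu(\al_1 \al_2) = \al_1(0)\al_2(0)\sig\p2 t + t\nu(\al_1 \al_2)$. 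Subtracting cancels the Brownian contributions and leaves precisely $(1_{[0,\cdot]\times(\Rbb \setminus \{0\})}\al_1 \al_2) \ast \tilde N$; here $1_{\Rbb\setminus\{0\}}\al_1\al_2 \in L\p2(\nu) \cap L\p1(\nu)$ since the $\al_i$ are bounded and lie in $L\p1(\mu)\cap L\p2(\mu)$. For the inductive step I apply the same calculation with $X\p{\al_{1:m-1}}$ in place of $X\p{\al_1}$; by the induction hypothesis $X\p{\al_{1:m-1}}$ is purely discontinuous with jumps $(\prod_{k=1}\p{m-1}\al_k)(\Delta X)1_{\{\Delta X\neq 0\}}$, so no Brownian cross-term appears and the recursion closes on the claimed form. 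The main potential obstacle is precisely this bookkeeping, i.e.\ verifying the absence of any continuous contribution at each step and the $L\p2(\nu)\cap L\p1(\nu)$-integrability of the accumulated product.

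From the formula, (iii) is immediate: polarising property (ii) yields $\pb{X\p\al}{X\p\bt}_t = t\mu(\al\bt)$, which is a deterministic function of time. For (ii), property (i) says $X\p\al$ is a L\'evy process whose L\'evy measure $\nu \circ \al\p{-1}$ is supported in $[-\|\al\|_\infty, \|\al\|_\infty]$ because $\al$ is bounded; hence $\int_{|y|>1} \rme\p{\lm|y|}(\nu \circ \al\p{-1})(\rmd y) < +\infty$ for every $\lm>0$ and therefore $\Ebb[\exp(\lm|X\p\al_t|)]<+\infty$ for every $\lm>0$ and $t\in[0,T]$ by the standard exponential-moment criterion for L\'evy processes.

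For (i), the inclusion $\Fbb\p{\Xscr_\Lm} \subseteq \Fbb\p X$ is clear from \eqref{poisson-integral}. For the reverse, the map $L\p2(\mu)\ni \al\mapsto X\p\al\in\Hscr\p2(\Fbb)$ is a linear isometry since $\|X\p\al\|_2\p2 = T\mu(\al\p2)$; combined with totality of $\Lm$ in $L\p2(\mu)$, this shows that $X\p\al$ is an $\Hscr\p2$-limit of elements in the linear hull of $\Xscr_\Lm$ for every $\al\in L\p2(\mu)$, hence $\Fbb\p{\Xscr_\Lm}$-adapted. Choosing $\al = 1_{\{0\}}$ recovers $\sig\Ws$, and choosing $\al = 1_{(a,b]}$ with $0\notin[a,b]$ recovers $\tilde N([0,\cdot]\times(a,b])$ and hence $N([0,\cdot]\times(a,b])$; varying $(a,b]$ pins down the restriction of $N$ to sets bounded away from zero, and then the L\'evy--It\^o decomposition \eqref{eq:leit.dec} shows $X$ is $\Fbb\p{\Xscr_\Lm}$-adapted. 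Finally (iv) follows from the sufficient condition for the CRP recalled in the remark after Proposition \ref{prop:rep.pol}. Compensated-covariation stability of $\Xscr_\Lm$ is a direct consequence of \eqref{co-co stable processes}: writing $\bt:=1_{\Rbb\setminus\{0\}}\prod_{k=1}\p m \al_k$, one has $X\p{\al_{1:m}}=X\p{\bt}$ and $\bt\in\Lm$ by Assumption \ref{ass:good.sys} (iii). Deterministic brackets is (iii), and density of the polynomial algebra generated by $\Xscr_\Lm$ in $L\p2(\Om,\Fscr_T\p X,\Pbb)$ follows from the exponential-moment bound (ii) via the elementary argument cited at the end of that remark.
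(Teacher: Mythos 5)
Your proof is correct. For the explicit formula \eqref{co-co stable processes} — the only part the paper actually proves — your argument is essentially identical to the paper's: compute $[X\p{\al_1},X\p{\al_2}]$ and $\pb{X\p{\al_1}}{X\p{\al_2}}$ directly for $m=2$, observe that the Brownian contributions cancel and that $1_{\Rbb\setminus\{0\}}\al_1\al_2\in L\p1(\nu)\cap L\p2(\nu)$, and induct, using that $X\p{\al_{1:m-1}}$ is purely discontinuous so that no further Gaussian cross-terms arise.

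Where you genuinely diverge is in the ``Moreover'' part: the paper disposes of (i)--(iv) with a citation to \cite[Propositions 6.4 and 6.5]{DTE16}, whereas you supply self-contained arguments. These are all sound: (iii) by polarisation of $\pb{X\p\al}{X\p\al}_t=t\mu(\al\p2)$; (ii) from boundedness of $\al$ forcing the L\'evy measure $\nu\circ\al\p{-1}$ of $X\p\al$ to have compact support, hence exponential moments of all orders via the standard criterion; (i) by the isometry $\|X\p\al\|_2\p2=T\mu(\al\p2)$ plus totality of $\Lm$ to recover $\sig\Ws$ and the restriction of $N$ to sets bounded away from zero, and then L\'evy--It\^o; and (iv) by verifying the three hypotheses of the sufficient condition for the CRP recalled after Proposition \ref{prop:rep.pol}, with compensated-covariation stability read off from \eqref{co-co stable processes} together with Assumption \ref{ass:good.sys}(iii), and noting that $\Fbb\p{\Xscr_\Lm}=\Fbb\p X$ converts the CRP with respect to $\Fbb\p{\Xscr_\Lm}$ into the claimed CRP with respect to $\Fbb\p X$. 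What your version buys is a proof that can be read without consulting \cite{DTE16}; what the paper's version buys is brevity, since those two propositions of \cite{DTE16} were established precisely for this family $\Xscr_\Lm$.
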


\begin{proof} For the ``Moreover'' part we refer to  \cite[Proposition 6.4 and Proposition 6.5]{DTE16}. 
To show \eqref{co-co stable processes},  we notice that
 $$ [X^{\al_1}, X^{\al_2}]_t= \al_1(0) \al_2(0) \sigma^2 t +  \int_{[0, t]\times\Rbb \setminus \{ 0\}} \al_1(x)\al_2(x)\,N(\rmd s,\rmd x) $$
and 
$$ \langle X^{\al_1}, X^{\al_2}\rangle_t= \al_1(0)\al_2(0) \sigma^2 t +   \nu ( \al_1\al_2  )t.  $$
Then, since $\al_1\al_2\in L\p1(\mu)\cap L\p2(\mu)$, we get  $ X^{\al_{1:2}} =  \Bigl (1_{[0, \cdot]\times(\Rbb \setminus \{ 0\})} \al_1\al_2  \Bigr ) \ast \tilde   N , $ and   \eqref{co-co stable processes}
follows   from  \eqref{eq:it.com.cov} by  induction.
\end{proof}

\subsection{Products of elementary iterated integrals}
The aim of this subsection is to deduce a \emph{product formula} for elements from  $\Jscr_\rme$,  where we assume that $\Jscr_\rme$ is  generated by  $\Xscr_\Lm$ and  $\Lm\subseteq L\p2(\mu)$ satisfies Assumption \ref{ass:good.sys}.

By Definition \ref{elem.it.int.}, an elementary iterated integral generated by  the martingales $ X\p{\al_1},\ldots,X\p{\al_m}$, $ {\al_1},\ldots,{\al_m}\in\Lm$, is given by
\begin{equation*}
J_{m}\p{\al_1,\ldots, \al_{m}}(F_{\otimes_m})_{\,t}
:=\int_0^t J_{m-1}^{\al_1,\ldots, \al_{m-1}}( F_{\otimes_{m-1}})_{u-}\,F_{m}(u)\, \rmd {\X{\al_{m}}_u},\qquad t\in [0,T], \quad m\geq 1\,.
\end{equation*}
Our aim is to determine  a product formula for
\[
\prod_{j=1}^N J_{m_j}\p{\al_1\p j,\ldots, \al_{m_j}\p j}(F_{\otimes_{m_j}}\p j),\quad  N\in\Nbb,
\] 
(given in equation \eqref{product-rule}  below) by iterating formula \eqref{eq:prod.for.it.int} until the inner elementary  iterated integrals appearing as integrands will reduce to deterministic functions. 
 In this way it will be  also possible to determine a formula for the moment of products of elementary iterated integrals which  generalizes  the isometry relation \eqref{eq:IRmix.el}.
For this goal we need to introduce some combinatoric rules and notations. 

First of all  we fix $N$, that is, the number of factors of the product, and then we fix $m_1,\ldots,m_N$, that is, the order of each factor. We then order all the functions $F_{k}\p j \in \Bb_T$, $k=1,\ldots,m_j$, $j=1,\dots,N$,  consecutively: Set
\equa    \overline m_0&:=&0,   \\ 
\overline m_j&:=&m_1+\ldots+m_j, \quad j =1,\dots, N.
\tion 
 We then define
\[
 (F_{\overline m_{j-1}+1},\ldots,F_{\overline m_j}):= (F_{1}\p j,\ldots,F_{m_j}\p j),\quad  j=1,\ldots,N,
\]
and set
\[
F_{\otimes_{\overline m_{j-1}+1:\overline m_j}}:=F_{\overline m_{j-1}+1}\otimes\cdots\otimes F_{\overline m_j},\quad j=1,\ldots,N.
\]
Analogously,  we define 
\[
 (\al_{\overline m_{j-1}+1},\ldots,\al_{\overline m_j}):= (\al_{1}\p j,\ldots,\al_{m_j}\p j),\quad j=1,\ldots,N.
\]
 Like in \eqref{product-rho}, we put $ \rho\p{\al_{\overline m_{j-1}+1:\overline m_j}} := \rho\p{\al_{\overline m_{j-1}+1}}\otimes \ldots \otimes \rho\p{\al_{\overline m_j}}$, 
 where  \[
\rmd\rho\p{\al_{\overline m_{j-1}+1}:\overline m_j} (t_1,\ldots,t_{m_j})=\bigg(\prod_{i=\overline m_{j-1}+1}\p{\overline m_j} \nu\big(\al_i\p2\big)\bigg)\rmd\lm( t_1,\ldots,t_{m_j}).
\]

With this notation we get: 
\equal \label{index-renaming}
  \prod_{j=1}^N J_{m_j}\p{\al_1\p j,\ldots, \al_{m_j}\p j}(F_{\otimes_{m_j}}\p j)
= \prod_{j=1}^N J_{m_j}\p{\al_{\overline m_{j-1}+1:\overline m_j}}(F_{\otimes_{\overline m_{j-1}+1:\overline m_j}}).
\tionl

Notice that a stochastic integral with respect to $X^\al$   is in fact the sum of a stochastic integral with respect to  $ \al(0)W^\sigma$ and one with respect to $(1_{[0, \cdot]\times(\Rbb \setminus \{ 0\})}\al) \ast  \tilde N$. For convenience we  shall write  these two integrals instead of the integral  with respect to $X^\al$ to recognize whether we are integrating with respect to the Brownian part or with respect to the jump part. To this aim, let $B \subseteq \{1,2,\ldots, \overline m_N\}$ denote the set of
indices for which we integrate with respect to the Brownian part of  $X^{\al_k}$, $k=1,\ldots,\overline m_N$. We define 
$$\al^B_k:=  \left \{ \begin{array}{ll} \al_k 1_{\{0\}} & \text{ if } k \in B,   \\ 
               \al_k   1_{\Rbb \setminus \{ 0\}} &\text{ if } k \notin B. 
\end{array}      \right .$$  
Then  
\begin{equation}\label{eq:trans.prod.bm.int}
 \prod_{j=1}^N J_{m_j}\p{\al_{\overline m_{j-1}+1:\overline m_j}}(F_{\otimes_{\overline m_{j-1}+1:\overline m_j}})=\sum_{B \subseteq \{1,2,\ldots, \overline m_N\}}  
 \prod_{j=1}^N J_{m_j}\p{\al^B_{\overline m_{j-1}+1:\overline m_j}}(F_{\otimes_{\overline m_{j-1}+1:\overline m_j}}),
\end{equation}
which implies that, to obtain the  multiplication formula, it suffices to transform  the product on the right-hand side of \eqref{eq:trans.prod.bm.int} into a sum of iterated integrals, for any  $B \subseteq \{1,2,\ldots, \overline m_N\}$.
We observe that the extreme cases $B=\{1,2,\ldots, \overline m_N\}$ and $B=\emptyset$ correspond to the case of integration  with respect to the Brownian part only and  with respect to the jump part only, respectively.
Let us consider, as an illustrating example, the case   $N=2$, $m_1=m_2=1$. Here we have  $B \in \{\emptyset,\{1\},\{2\},\{1,2\}\}$ and 
\begin{gather*}
\al\p{\{1\}}_1=\al_11_{\{0\}},\quad\al \p{\{1\}}_2=\al_21_{\Rbb\setminus\{0\}},\quad\al\p{\{2\}}_1=\al_11_{\Rbb\setminus\{0\}},\quad \al\p{\{2\}}_2=\al_21_{\{0\}},\\\\ \al\p{\{1,2\}}_1=\al_11_{\{0\}},\quad \al\p{\{1,2\}}_2=\al_21_{\{0\}},\quad \al\p{\{\emptyset\}}_1=\al_11_{\Rbb\setminus\{0\}},\quad \al\p{\{\emptyset\}}_2=\al_21_{\Rbb\setminus\{0\}}.
\end{gather*}
Notice that \eqref{eq:prod.for.it.int} holds also for $\prod_{j=1}^N J_{m_j}^{\al^B_{\overline m_{j-1}+1: \overline m_j}}(F_{\otimes_{\overline m_{j-1}+1: \overline m_j}})$ and reads as 
\begin{equation}\label{levy-product}
\begin{split}
&\prod_{j=1}^N J_{m_j}^{\al^B_{\overline m_{j-1}+1: \overline m_j}}(F_{\otimes_{\overline m_{j-1}+1: \overline m_j}})\\
&\quad=\sum_{r=1}\p N\sum_{{1\leq j_1<\ldots<j_r\leq N}}\Bigg\{\Bigg( \prod_{\begin{subarray}{c}k=1\\k\neq j_1,\ldots,j_i\end{subarray}}\p N J_{m_k}^{\al^B_{\overline m_{k-1}+1: \overline m_k}}(F_{\otimes_{\overline m_{k-1}+1: \overline m_k}})_- \Bigg ) \\
&\quad\quad\times
 \Bigg ( \prod_{\ell=1}\p r   J_{m_{j_\ell-1}}^{\al^B_{ \overline m_{j_{\ell-1}} +1: \overline m_{j_\ell-1}}} (F_{\otimes_{\overline m_{j_\ell-1}+1: \overline m_{j_{\ell}-1}}})_-   \Bigg )  \Bigg (  \prod_{q=1}\p r  F_{\overline m_{j_q}}  \Bigg ) 
 \Bigg\}\cdot\X{ \al^B_{\overline m_{j_1}},\ldots,\al^B_{\overline m_{j_r}}}\\
 &\quad\quad\quad\quad\quad+
\sum_{r=2}\p N\sum_{{1\leq j_1<\ldots<j_r\leq N}}\Bigg\{ \Bigg( \prod_{\begin{subarray}{c}k=1\\k\neq j_1,\ldots,j_i\end{subarray}}\p N J_{m_k}^{\al^B_{\overline m_{k-1}+1: \overline m_k}}(
F_{\otimes_{\overline m_{k-1}+1: \overline m_k}})_-\Bigg )\\
&\quad\quad \times \Bigg( \prod_{\ell=1}\p r   J_{m_{j_\ell-1}}^{\al^B_{ \overline m_{j_{\ell-1}} +1: \overline m_{j_\ell-1}}} (F_{\otimes_{\overline m_{j_\ell-1}+1: \overline m_{j_{\ell}-1}}})_-\Bigg ) \Bigg( \prod_{q=1}\p r   F_{\overline m_{j_q}} \Bigg )
  \Bigg\}\cdot\aPP{\X{\al^B_{\overline m_{j_1}},\ldots, \al^B_{\overline m_{j_{r-1}}} }}{\X{\al^B_{\overline m_{j_r}}}}\,.
\end{split}
\end{equation}

We observe that the integrands on the right-hand of \eqref{levy-product} consist of products of elementary iterated integrals. These elementary iterated integrals  are either of the same  order $m_k$ as the ones appearing in the product 
on the left-hand side of \eqref{levy-product} or of the diminished order $m_{j_\ell-1}$. We will repeatedly apply  \eqref{levy-product} to the { \it integrands} on the right-hand side of \eqref{levy-product} until we get  integrands  that consist of iterated integrals of order zero, that is, they are equal to one. 
\bigskip \\
{\bf The integrators in \eqref{levy-product}.}  
In order to determine the exact structure of the outcome of the procedure explained above, we want to specify which  integrators 
$\rmd \X{ \al^B_{\overline m_{j_1}},\ldots,\al^B_{\overline m_{j_r}}}$ can occur in \eqref{levy-product}.
If $r=1$, we have 
\equa \begin{array}{lll} 
\X{ \al^B_{\overline m_{j_1}}}&= \al_{\overline m_{j_1}}(0) W^\sigma, \quad & \text{ if }\quad \overline m_{j_1} \in B, \\
\X{ \al^B_{\overline m_{j_1}}}&= (1_{\Rbb \setminus \{ 0\}}1_{[0, \cdot]}\al_{\overline m_{j_1}}) \ast  \tilde N, \quad & \text{ if }\quad
 \overline m_{j_1}\in B^c =\{1,2,\ldots,\overline m_N\} \setminus B .\end{array} \tion
From \eqref{co-co stable processes} we conclude that  for $r \ge 2$ 
$$\N{ \al^B_{\overline m_{j_1}},\ldots,\al^B_{\overline m_{j_r}}}=  \N{1_{\Rbb \setminus\{0\}}\prod_{k=1}^r \al_{\overline m_{j_k}} } 
=    \left \{ \begin{array}{ll}  \Bigl (1_{[0, \cdot]\times(\Rbb \setminus \{ 0\})} \prod_{k=1}^r \al_{\overline m_{j_k} } \Bigr ) \ast  \tilde N, 
& \text{ if }\quad  \{ \overline m_{j_1},\ldots,\overline m_{j_r} \} \subseteq B^c, \\ &\\
 0, &\text{ otherwise.}
\end{array}      \right .  $$
Furthermore, it holds 
$$\aPP{\N{\al^B_{\overline m_{j_1}} }}{\X{\al^B_{\overline m_{j_2}}}}_t=   t \, \mu( \al^B_{\overline m_{j_1}}\al^B_{\overline m_{j_2}})
= \left \{ \begin{array}{ll}  
   t \,\nu   (  \al_{\overline m_{j_1}}\al_{\overline m_{j_2}}  ),  &\text{ if }  \{\overline m_{j_1}, \overline m_{j_2} \} \subseteq B^c, \\\\
   t \sigma^2 \al_{\overline m_{j_1}}\!\!(0) \, \al_{\overline m_{j_2}}\!\!(0),  &\text{ if } \{\overline m_{j_1}, \overline m_{j_2} \} \subseteq B,  \\\\
   0, &   \text{ otherwise, }  \end{array}  \\ \right .   $$
and for  $r \ge 3$,
$$\aPP{\N{\al^B_{\overline m_{j_1}},\ldots,  \al^B_{\overline m_{j_{r-1}}} }}{\X{\al^B_{\overline m_{j_r}}}}_t = 
 \left \{ \begin{array}{ll}    t \,\nu \Bigl  ( \prod_{k=1}^r \al_{\overline m_{j_k}}  \Bigr ), \quad & \text{ if }    
 \{\overline m_{j_1},\ldots,  \overline m_{j_r} \} \subseteq B^c,  \\ \\
  $0$, & \text{otherwise}. \end{array}      \right . $$
{\bf The integrators in the iteration steps.} On the right-hand side of \eqref{levy-product}, integrals with respect to martingales and with respect to {\it deterministic}  point-bracket processes appear.
If we apply \eqref{levy-product} to the integrands, we will get ``mixed'' iterated integrals where the integrators are both martingales
and point brackets. We will use the superscript  $i$ where $i=1$ stands for ``martingale''  and $i=0$ for point bracket.
 If  $S\subseteq\{ 1, \ldots, \overline m_N\}$ we define   
 \begin{equation*}
 \al_{(S)} := \prod_{\ell \in S}  \al_{\ell},   \quad    \widehat \al_{(S)} := 1_{\Rbb \setminus\{0\}}\al_{(S)}. 
\end{equation*}
For  $ i \in \{ 0,1\}$, we will write,  summarizing  the analysis of the integrators above, 
\equal \label{Y-in-detail}
\rmd Y^{ \al_{(S),i} }_t := \left \{ \begin{array}{ll} \nu(\al_{(S)}) \rmd t
& \text{ if }\quad i=0,  \, \,S \subseteq B^c, \,\, |S|\ge 2,     \\ \\
\sigma^2\al_{(S)}(0)  \rmd t  & \text{ if }\quad i=0, \,\,S \subseteq B,  \,\, |S|= 2,  \\\\
               \al_{(S)}(0)\rmd W^\sigma_t   & \text{ if }\quad  i=1, \,\,S\subseteq B, \,\, |S|= 1, \\\\
           \rmd  X^{\widehat \al_{(S)}}_t                      & \text{ if }\quad  i=1, \,\, S\subseteq B^c , \,\, |S| \ge 1,  \\\\
           0, &\text{ otherwise}.
\end{array}      \right .
\tionl
\bigskip \\
{\bf Algorithm to build  identification rules for the integrands.} 
In \eqref{levy-product} the function   $t \mapsto \prod_{q=1}\p r   F_{\overline m_{j_q}}(t)  $ appears on the right hand side. Since we want to use the tensor product  $\bigotimes_{i=1}^{ N} F_{\otimes \overline m_{i-1}+1:\overline m_i}$ as integrand in the final formula, we need to identify $t:= t_{\overline m_{j_1}}=\ldots= t_{\overline m_{j_r}}$  to get this ordinary  product from the tensor product. These ordinary products arise from each application of \eqref{levy-product}, hence we want to  derive an identification  rule  that describes  which of the variables $t_1,\ldots,t_{\overline m_N}$ need to be identified in each step.
Set 
$$[\overline m_N] :=\{1,2,\ldots, \overline m_N\}. $$
 $\Pi^{}(m_1,\ldots,m_N)$  will  denote the set of all those partitions  ${\bf s}= (S_1,\ldots,S_k)$  of the set  $[\overline m_N]$   which can be  built using the following backward induction from $k$ till $1:$\\
\underline{First Step: $k$} \, Choose a non-empty subset
$$ S_k \subseteq \{ \overline m_1, \ldots, \overline m_N\}. $$
\underline{Step: $k-l+1 \to k-l$} \, Assume that  $S_k, S_{k-1}, \ldots, S_{k-l+1}$ (for some $l =1,\ldots,k-1$) have been chosen. \\
 By $I_1,..., I_N$  we denote  the sets which contain exactly those indices  of the $F_j$'s  which are used in the {\it same } elementary iterated integral on the left-hand side of \eqref{index-renaming}, that is, 
$$I_1 := \{1, \ldots,m_1\}  \quad \text{  and  }  \quad I_r := \{\overline m_{r-1}+1, \ldots, \overline m_r \}, \quad r=2,\ldots,N.$$ 
Then  $S_{k-l}$  denotes   a non-empty subset of the set consisting of  the largest elements of $I_1,\ldots,I_N$  {\it which have not yet been chosen for $S_k, S_{k-1}, \ldots, S_{k-l+1}$ already}:
If
\begin{equation*}
   L^{k-l}_r  = \sup\{ I_r \setminus  ( S_k \cup S_{k-1} \cup \ldots\cup S_{k-l+1})\}, \quad   r=1,\ldots,N,
\end{equation*}
 where we use the convention $\sup \emptyset= -\infty$, then 
$$S_{k-l}  \subseteq    \{ L^{k-l}_r:   L^{k-l}_r\neq -\infty, \,\, r=1,\ldots,N \}.$$
Notice that we require  that  $\bigcup_{\ell=1}^k S_\ell = [\overline m_N]$.    
Any such ${\bf s}= (S_1,\ldots,S_k) \in \Pi^{}(m_1,\ldots,m_N)$ will be called an {\bf identification rule}.  We summarize this procedure in the 
following definition.
\begin{definition}  \label{partitions} Let $\Pi^{}(m_1,\ldots,m_N)$ be the set of all partitions  ${\bf s}= (S_1,\ldots,S_{|{\bf s}|})$  of $\{1,2,\ldots, \overline m_N\}$  (here $|{\bf s}|$ stands for the number of sets which belong to ${\bf s}$), where $N \le |{\bf s}| \le \overline m_N$, such that \\
{\rm(i)}   each $S_\ell$ contains at most one element of   $\{\overline m_{r-1}+1, \ldots, \overline m_r \}$, that is,   \\
 $$ |S_\ell \cap  \{\overline m_{r-1}+1, \ldots, \overline m_r \}| \le 1 \quad \text{ for all} \quad  \ell=1,...,|{\bf s}|, \,\, r=1,\ldots,N,$$ 
{\rm(ii)}  the elements of each  $\{\overline m_{r-1}+1, \ldots, \overline m_r \}$ appear ordered within  ${\bf s}$, that is, 
  if  for  $1\le \ell < k \le |{\bf s}|$  it holds $a \in  S_\ell \cap  \{\overline m_{r-1}+1, \ldots, \overline m_r \}$  and  $b \in  S_k \cap  \{\overline m_{r-1}+1, \ldots, \overline m_r \}, $ then $a<b.$
\end{definition}

For a function
$(u_1,\ldots,u_{\overline m_N}) \mapsto  H(u_1,\ldots,u_{\overline m_N})$ and  ${\bf s}= (S_1,\ldots,S_k)$, we define the following identification of variables: 
\equal \label{s-rule}
&& H_{{\bf s}}(t_1,\ldots,t_k) \,\, \text{ is derived from  } \,\, H(u_1,\ldots,u_{\overline m_N}) \,\,   \text{ by  replacing   all  }  \,\,  u_j   \,\,\text{  with } \,\, j \in  S_r \,\, \notag \\  
&&  \text{  by the same  } \,\,  t_r \text{  for } \,\,   r=1,\ldots,k.    
 \tionl   
 
As already indicated above  the  identification rules   ${\bf s}= (S_1,\ldots,S_k)$   will be used to describe the integrands  which result from the  iteration of  \eqref{levy-product}: In the first step,  which is  \eqref{levy-product} itself,  the factor $ \prod_{q=1}\p r  F_{\overline m_{j_q}}$   appears. Notice that   
$$ \prod_{q=1}\p r  F_{\overline m_{j_q}}   = \prod_{\ell \in S_k}  F_{\ell}   \quad \text{ provided that } \quad S_k = \{\overline m_{j_1}, \ldots, \overline m_{j_r}\}.$$
At the same time, the iterated integrals with order $m_{j_1}, \ldots,  m_{j_r},$ which appear on the left-hand side of  \eqref{levy-product},  have its order diminished by $1$
on the right-hand side.
If we apply \eqref{levy-product}  to the integrands of   the right-hand side,  then the product  $ \prod_{\ell \in S_{k-1}}  F_{\ell} $ will appear, where 
$$S_{k-1}\subseteq (\{ \overline m_1, \ldots, \overline m_N\}\setminus S_k) \cup  \{\overline m_{j_1}-1, \ldots,  \overline m_{j_r}-1 \}.   $$ 
We repeatedly apply \eqref{levy-product} and, finally, we  have $ \prod_{r=1}^k   \prod_{j \in S_r} F_j (t_r)$. 
This product we get from  the tensor product   $  \left ( \bigotimes_{j=1}\p N F_{\otimes \overline m_{j-1}+1:\overline m_{j+1}}  \right )(u_1,\ldots, u_{\overline m_N})= F_1(u_1) \times \ldots\times F_{\overline m_N}(u_{\overline m_N}) $ applying the identification rule ${\bf s} =(S_1,\ldots, S_k)$:
 $$\left (  \bigotimes_{j=1}\p N F_{\otimes \overline m_{j-1}+1:\overline m_j} \right )_{\bf s}(t_1,\ldots,t_{k}) = \prod_{r=1}^k   \prod_{j \in S_r} F_j (t_r) .$$ 
Identification rules  similar to  the above ones appear  also in Peccati and Taqqu \cite[Chapter 7]{PT11}  and Last et al. \cite{LPST14} (see also the references therein).
 \bigskip \\
{\bf The set  $I_{{\bf s}}$.}  We still need to pay attention to the fact that each application of \eqref{levy-product}  produces  integrals  with respect to both
compensated-covariation processes and  point brackets. 
For  any identification rule ${\bf s}= (S_1,\ldots,S_{|{\bf s}|}) \in   \Pi^{}(m_1,\ldots,m_N) $   we define  the sets 
$$I_{{\bf s}}=    \{  {\bf i} := (i_1,\ldots, i_{|{\bf s}|}):     i_\ell \in  i(S_\ell), \,\,  \ell=1,\ldots, |{\bf s}| \},$$
where 
\equa
i(S_\ell) := \left \{ \begin{array}{ll} \{ 0\},  & \text{ if }   |S_\ell |  = 2, \,\, S_\ell \subseteq   B,   \\ 
\{ 1\},    & \text{ if }   |S_\ell|  = 1,    \\
   \{ 0, 1\} ,             & \text{ if } |S_\ell|  \ge  2 ,  \,\, S_\ell \subseteq   B^c, \\
    \emptyset, &\text{ otherwise}.
\end{array}      \right .  
\tion
 So, in view of \eqref{Y-in-detail}, if for example $ |S_\ell |  = 2$ and $S_\ell \subseteq   B$, then we have just one integral, hence $ i(S_\ell)$ contains one element. This integral is with respect to $\rmd t$,
which we indicate here by $ i(S_\ell)=\{ 0\}$. If  $|S_\ell|  = 1,$ then we also  have just one integral but  the integrator is  a martingale, so we use $i(S_\ell)=\{ 1\}$, and so on.

We denote by  $M_t^{|{\bf s}|}$ the simplex $M_t^{\,m}$ (cf.\ \eqref{eq:def.Mn}) with $m:=|{\bf s}|$. Then we get 
\equal \label{sum_with_zeros}
&&  \prod_{j=1}^N J_{m_j}^{\al_{\overline m_{j-1}+1: \overline m_j}}(F_{ \otimes{\overline m_{j-1}+1: \overline m_j}})_t  \notag\\ 
&=& \!\!\!\!\sum_{B \subseteq [\overline m_N]} \,\,  \sum_{{\bf s} \in  \Pi^{}(m_1,\ldots,m_N)} \,\,\  \sum_{ {\bf i}  \in  I_{{\bf s}}}  
 \int_{M_t^{|{\bf s}|}} \left ( \bigotimes_{j=1}\p N F_{\otimes \overline m_{j-1}+1:\overline m_j} \right )_{\bf s}(t_1,\ldots,t_{|{\bf s}|}) 
  \rmd Y_{t_1}^{\al^{B}_{(S_1),i_1}} \dots \rmd Y^{\al^{B}_{(S_{|{\bf s}|}), i_{|{\bf s}|}}}_{t_{|{\bf s}|}}. 
 \tionl

{\bf Avoiding zeros in the summation.}
The right-hand side of \eqref{sum_with_zeros} contains many terms which are zero: From  \eqref{Y-in-detail} we see that this happens, for example, if there is a set $S_\ell$ containing elements from  both, $B$ and $B^c$. Another case where the integral is zero is if there is  a set $S_\ell \subseteq B$ with $|S_\ell| \ge 3$. We will exclude these cases by defining
\equal  \label{s-no-zero}
\Pi^{}_{\le 2, \ge1 }(B, B^c; m_1,\ldots,m_N) &:=& \big \{ {\bf s} =(S_1,\ldots,S_{|{\bf s}|})  \in  \Pi^{}(m_1,\ldots,m_N):
   \forall \ell =1,\ldots,|{\bf s}|  \text{ it holds } \notag \\
&&\quad\quad\quad\quad  (S_\ell \cap B =S_\ell  \text{ and  }  |S_\ell| \le 2 ) \,\, \text{ or } \,\, S_\ell \cap B^c =S_\ell  \big \}.
 \tionl
Hence we have derived the following theorem:

\begin{theorem}\label{thm:prod.rule}   Let $X$ be a L\'evy process and suppose that  $\Lm\subseteq L\p2(\mu)$ satisfies
Assumption \ref{ass:good.sys}. For $J_{m_j}^{\al_{\overline m_{j-1}+1: \overline m_j}}(F\p j) \in \Jscr_\rme$, $j=1,\ldots,N$, 
 with $F\p j:=F_{ \otimes{\overline m_{j-1}+1: \overline m_j}}   \in \Bb_T^{m_j}$, the following product formula holds: 

\equal \label{product-rule}
\hspace*{-5em}&& \hspace*{-2em}\prod_{j=1}^N J_{m_j}^{\al_{\overline m_{j-1}+1: \overline m_j}}(F\p j)_t  \notag\\ 
\hspace*{-1em}&=&\!\!\!\!\sum_{B \subseteq [\overline m_N]} \,\,  \sum_{{\bf s} \in  \Pi_{\le 2, \ge 1}^{}(B,B^c; m_1,\ldots,m_N)} \,\,\  \sum_{ {\bf i}  \in  I_{{\bf s}}}  
 \int_{M_t^{|{\bf s}|}} \bigg( \bigotimes_{j=1}^{ N} F\p j \bigg )_{\bf s}(t_1,\ldots,t_{|{\bf s}|}) 
  \rmd Y_{t_1}^{\al^{B}_{(S_1),i_1}} \ldots \rmd Y^{\al^{B}_{(S_{|{\bf s}|}), i_{|{\bf s}|}}}_{t_{|{\bf s}|}}.
	\tionl
\end{theorem}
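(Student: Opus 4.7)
The plan is to start from the recursive formula \eqref{eq:prod.for.it.int} of Theorem \ref{thm:prod.for.it.int}, which is already applicable because Assumption \ref{ass:good.sys} ensures, through the preceding proposition, that $\Xscr_\Lm \subseteq \Hscr^2(\Fbb)$ is compensated-covariation stable with deterministic point brackets. One application of this recursion reduces the total order of the product by at least one, so iterating it must terminate in a fixed number of steps. The goal of the proof is to show that after fully unfolding this recursion, the outcome can be rewritten as the sum in \eqref{product-rule} indexed by $(B, {\bf s}, {\bf i})$.

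First I would use the decomposition \eqref{eq:trans.prod.bm.int} to pass from each $X^{\alpha_k}$ to the pair consisting of its Brownian part $\alpha_k(0)W^\sigma$ and its jump part $(1_{[0,\cdot]\times(\Rbb\setminus\{0\})}\alpha_k)\ast\tilde N$, parametrised by the set $B\subseteq [\overline m_N]$ of indices assigned to the Brownian part. With $B$ fixed, I would then apply \eqref{levy-product} iteratively to the integrands that appear on its right-hand side, since each summand is again a product of elementary iterated integrals of strictly smaller total order. I would show by induction on $\overline m_N$ (or on the total order still to be consumed) that a single step producing a selection $\{j_1<\ldots<j_r\}$ corresponds precisely to fixing the last set $S_{|{\bf s}|}=\{\overline m_{j_1},\ldots,\overline m_{j_r}\}$ of an identification rule ${\bf s}$, while the factor $\prod_{q=1}^r F_{\overline m_{j_q}}$ originates from identifying the $r$ time variables at that step. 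Iterating this construction backwards through $S_{|{\bf s}|-1},\ldots,S_1$ reproduces exactly the combinatorial data from Definition \ref{partitions}: condition (i) reflects that, in each step, from each factor $I_r$ we can peel off at most one (current) topmost index, and condition (ii) records that these indices are necessarily peeled in reverse order. The requirement $\bigcup_\ell S_\ell=[\overline m_N]$ is enforced by continuing the recursion until every elementary iterated integral has been reduced to the identity $J_0$.

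Next I would analyse the integrators. At each step, the outermost integrator is either the compensated-covariation process $X^{\alpha^B_{(S_\ell)}}$ (coming from the first sum in \eqref{eq:prod.for.it.int}) or a predictable covariation, and \eqref{co-co stable processes} together with the explicit form of $\langle X^\alpha,X^\beta\rangle$ precisely yields the four nonzero cases of \eqref{Y-in-detail}. In particular, if $S_\ell$ mixes indices from $B$ and $B^c$, then $\mu(\alpha^B_{(S_\ell)})=0$ and the integrator is $0$; if $S_\ell\subseteq B$ with $|S_\ell|\geq 3$, the iterated covariation $X^{\alpha^B_{(S_\ell)}}$ vanishes because the Brownian part has no jumps and higher order brackets of $W^\sigma$ are $0$; and if $|S_\ell|=1$ with $\ell\in B^c$ combined with $i=0$, or similar degenerate cases, the corresponding predictable-bracket term is absent from \eqref{eq:prod.for.it.int}. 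Eliminating these zero contributions leaves exactly the restricted family $\Pi_{\leq 2,\geq 1}(B,B^c;m_1,\ldots,m_N)$ and, for each admissible ${\bf s}$, the allowed tuples ${\bf i}\in I_{\bf s}$ that record for every $S_\ell$ whether the step came from the first (martingale, $i_\ell=1$) or the second (point-bracket, $i_\ell=0$) sum of the recursion.

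The main obstacle is purely combinatorial: verifying that the iteration produces each admissible triple $(B,{\bf s},{\bf i})$ exactly once and with the correct integrand $\bigl(\bigotimes_{j=1}^N F^{(j)}\bigr)_{\bf s}$ on the simplex $M_t^{|{\bf s}|}$. I would handle this by induction on $\overline m_N$, noting that the outermost step of the recursion produces a bijection between the choices $(\{j_1,\ldots,j_r\},\text{type}\in\{\text{mart.},\text{p.b.}\})$ in \eqref{levy-product} and the pairs $(S_{|{\bf s}|},i_{|{\bf s}|})$ in the statement, and then invoking the inductive hypothesis for each inner product of elementary iterated integrals, whose total order has strictly decreased. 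The time ordering on $M_t^{|{\bf s}|}$ is automatic because each successive integration in the iteration is taken up to the previous integration variable. This yields \eqref{product-rule} and concludes the proof.
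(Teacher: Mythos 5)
Your proposal follows essentially the same route as the paper: the paper's "proof" of Theorem \ref{thm:prod.rule} is precisely the derivation preceding it, namely the Brownian/jump decomposition over $B$ in \eqref{eq:trans.prod.bm.int}, iterated application of \eqref{levy-product} until the inner integrands reduce to constants, the bookkeeping of each step's selection $\{\overline m_{j_1},\ldots,\overline m_{j_r}\}$ and its type (martingale versus point bracket) as the pairs $(S_\ell,i_\ell)$ of an identification rule, and the elimination of the vanishing integrators via \eqref{Y-in-detail} to restrict to $\Pi_{\le 2,\ge 1}(B,B^c;m_1,\ldots,m_N)$. Your suggestion to organise the combinatorial verification as an induction on the total order is a reasonable way to formalise what the paper presents as an algorithmic description, and the proposal is correct.
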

Let us now denote by $\lm\p{|\bf s|}$  the $|\bf s|$-dimensional Lebesgue measure restricted to the simplex $M\p{|\bf s|}_t$ (cf.\ \eqref{eq:def.Mn}). Our aim is to compute the expectation  of \eqref{product-rule}. We shall call the resulting relation \emph{moment formula}. Before we define

\equal  \label{pi2}
\Pi^{}_{= 2, \ge2 }(B, B^c; m_1,\ldots,m_N) &:=& \big \{ {\bf s} =(S_1,\ldots,S_{|{\bf s}|})  \in  \Pi^{}(m_1,\ldots,m_N):
   \forall \ell =1,\ldots,|{\bf s}|  \text{ it holds } \notag \\
&&(S_\ell \cap B =S_\ell  \text{ and  }  |S_\ell| = 2 ) \,\, \text{ or } \,\, S_\ell \cap B^c =S_\ell \text{ and  }  |S_\ell| \ge2  \big \}.
 \tionl

\begin{corollary}\label{cor:moment.formula}  Let $X$ be a L\'evy process and suppose that  $\Lm\subseteq L\p2(\mu)$ satisfies
Assumption \ref{ass:good.sys}. Then for $J_{m_j}^{\al_{\overline m_{j-1}+1: \overline m_j}}(F^j ) \in \Jscr_\rme$, $j=1,\ldots,N$,
where $F^j = F_{ \otimes{\overline m_{j-1}+1: \overline m_j}}   \in \Bb_T^{m_j}$, the moment formula 
\equal \label{moment-formula}
 \EE  \bigg [\prod_{j=1}^N J_{m_j}^{\al_{\overline m_{j-1}+1: \overline m_j}}(F^j)_t\bigg ] 
    &=& \!\!\!\!\sum_{B \subseteq [\overline m_N]} \,\,  \sum_{{\bf s} \in  \Pi_{= 2, \ge 2}^{}(B,B^c; m_1,\ldots,m_N)} \,\,\ 
 \bigg (  \prod_{\begin{subarray}{c}\ell=1\\ S_\ell \subseteq B^c\end{subarray}}^{|{\bf s}|} \nu(\alpha_{(S_\ell)})  \bigg )  
 \bigg (   \prod_{\begin{subarray}{c}q=1\\ S_q \subseteq B\end{subarray}}^{|{\bf s}|}   \bigg (\alpha_{(S_q)}(0) \sigma^2 \bigg ) \bigg ) \notag  \\
&&  \quad \quad \quad  \times \int_{M_t^{|{\bf s}|}} \bigg (\bigotimes_{i=1}^{N} F^j \bigg )_{\bf s}(t_1,\ldots,t_{|{\bf s}|})  
    \rmd  \lambda^{|{\bf s}|} (t_1,\ldots,t_{|{\bf s}|}) 
\tionl holds.
\end{corollary}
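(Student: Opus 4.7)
The plan is to start from the product formula \eqref{product-rule} of Theorem \ref{thm:prod.rule} and take expectations on both sides. First I would check that the operation $\EE$ commutes with the (finite) sums on the right-hand side: the integrand $(\bigotimes_j F^j)_{\bf s}$ is bounded by Assumption \ref{ass:good.sys}(iv) together with boundedness of each $F_k$, and the integrators $Y^{\al^B_{(S_\ell),i_\ell}}$ are either square-integrable martingales or deterministic functions of time. Thus each of the finitely many iterated integrals in the product formula is integrable by Lemma \ref{lem:mom.it.int}(i) applied, recursively, to the mixed iterated integrals involved.

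The key observation is the following. Recalling \eqref{Y-in-detail}, for every index $\ell$, either $i_\ell=1$ and $\rmd Y^{\al^B_{(S_\ell),1}}$ is the differential of a martingale starting at zero (namely $\al_{(S_\ell)}(0)W^\sigma$ when $S_\ell\subseteq B$ and $|S_\ell|=1$, or $X^{\widehat\al_{(S_\ell)}}$ when $S_\ell\subseteq B^c$), or $i_\ell=0$ and $\rmd Y^{\al^B_{(S_\ell),0}}=c_\ell\,\rmd t$ with $c_\ell\in\{\sigma^2\al_{(S_\ell)}(0),\,\nu(\al_{(S_\ell)})\}$. I would then peel the iterated integral from the outside: for the outermost integral in the variable $t_{|{\bf s}|}$, if $i_{|{\bf s}|}=1$ the expectation vanishes because a stochastic integral against a martingale with a predictable integrand in $\Lrm^2$ is itself a martingale starting at zero; if $i_{|{\bf s}|}=0$, Fubini allows one to pull the expectation inside and reduce the problem to an iterated integral of depth $|{\bf s}|-1$. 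Iterating this argument shows that the only surviving tuple is ${\bf i}=(0,\ldots,0)$.

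To conclude, I would identify the partitions ${\bf s}\in\Pi^{}(m_1,\ldots,m_N)$ for which the tuple ${\bf i}=(0,\ldots,0)$ actually belongs to $I_{\bf s}$: this requires $0\in i(S_\ell)$ for each $\ell$, which, by the definition of $i(S_\ell)$, means exactly that each $S_\ell$ satisfies either $|S_\ell|=2$ with $S_\ell\subseteq B$, or $|S_\ell|\ge 2$ with $S_\ell\subseteq B^c$. This is precisely the condition defining $\Pi^{}_{=2,\ge 2}(B,B^c;m_1,\ldots,m_N)$ in \eqref{pi2}. For these surviving terms the constants $c_\ell$ factor out as $\prod_{S_\ell\subseteq B^c}\nu(\al_{(S_\ell)})\cdot\prod_{S_q\subseteq B}\sigma^2\al_{(S_q)}(0)$, and the remaining iterated integral against $|{\bf s}|$ copies of Lebesgue measure on $M_t^{|{\bf s}|}$ is exactly the simplex integral appearing in \eqref{moment-formula}.

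The main technical obstacle is the rigorous justification of the recursive Fubini/martingale argument: at each level of the peeling the integrand process (itself an iterated integral of mixed martingale and deterministic type) must be verified to be in $L^1$ at deterministic levels and in $\Lrm^2$ against the next martingale integrator. This follows, after applying Burkholder--Davis--Gundy's inequality on martingale levels and classical Fubini on deterministic levels, from the uniform boundedness of $(\bigotimes F^j)_{\bf s}$ and the all-orders integrability of $X^{\widehat\al_{(S_\ell)}}$ guaranteed by the boundedness of the $\al_j$'s; no ingredient beyond Section \ref{sec:mom.el.it} is needed.
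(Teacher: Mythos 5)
Your proposal is correct and follows essentially the same route as the paper: take expectations in the product formula \eqref{product-rule}, observe that every term in which at least one integrator $Y^{\al^B_{(S_\ell),i_\ell}}$ is a martingale has vanishing expectation (your outside-in peeling via the martingale property and Fubini is just a more explicit justification of this step), and note that the surviving terms are exactly those with ${\bf i}=(0,\ldots,0)$, which forces ${\bf s}\in\Pi_{=2,\ge2}(B,B^c;m_1,\ldots,m_N)$ and lets the constants $\nu(\al_{(S_\ell)})$ and $\sigma^2\al_{(S_q)}(0)$ factor out in front of the Lebesgue simplex integral.
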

\begin{proof} First we observe that,  according to \eqref{Y-in-detail}, the process  $Y^{\al^{B}_{(S_\ell),i_\ell}}$ can be deterministic only if $|S_\ell| \ge2$  and $i_\ell=0$, and in this case, its explicit expression is
\[
 Y_{t}^{\al^{B}_{(S_\ell), 0}}=\begin{cases}\nu(\al_{(S_\ell)}) \, t,&\quad\textnormal{if }\ S_\ell \subseteq B^c,\\
\sigma^2\al_{(S)}(0)\, t,&\quad\textnormal{if }\ S_\ell \subseteq B\ \text{ and }\ |S_\ell |=2.
\end{cases}
\]
If $Y^{\al^{B}_{(S_\ell),i_\ell}}$ is random, then  $i_\ell=1$ and  $Y^{\al^{B}_{(S_\ell),1}}\in\Hscr\p{\,2}$.
We now take the expectation in \eqref{product-rule} and notice that, if at least one of the  $Y^{\al^{B}_{(S_\ell),i_\ell}}$ is random, we have
$$ \EE  \left[\int_{M_t^{|{\bf s}|}} \bigg ( \bigotimes_{j=1}^{ N} F^j \bigg )_{\bf s}(t_1,\ldots,t_{|{\bf s}|}) 
  \rmd Y_{t_1}^{\al^{B}_{(S_1),i_1}} \ldots \rmd Y^{\al^{B}_{(S_{|{\bf s}|}), i_{|{\bf s}|}}}_{t_{|{\bf s}|}}\right] =0 $$
 because the integrand is bounded.  This implies that
 only  integrals  with ${\bf s} \in  \Pi_{= 2, \ge 2}^{}(B,B^c; m_1,\ldots,m_N)$ will appear,  and since  $i_\ell=0$ for all $\ell$ there is no sum over  ${\bf i}  \in  I_{{\bf s}},$ so that we get \eqref{moment-formula}. The proof is complete.
\end{proof} 

 \begin{example}  \label{calculation} We illustrate the moment formula \eqref{moment-formula} with some examples. Choose $\alpha_1,\ldots ,\al_5 $ and let  $X^{\alpha_1},\ldots,X^{\alpha_5}$ be given by \eqref{poisson-integral}. 
\begin{enumerate}
\item For $N=3$
we assume that  $m_1=1,$ $m_2=1$ and  $m_3=2.$ We want to  apply \eqref{moment-formula} to compute
$$\EE \big[J_1^{\alpha_1}(F^1)_t  J_1^{\alpha_2}(F^2)_t   J_2^{\alpha_3,\alpha_4 }(F^3)_t\big]. $$ 

By Definition \ref{partitions} we get  the identification rules  $${\bf s_1}= (\{1,3 \},\{ 2,4 \}) \quad \text{ and }\quad  {\bf s_2}=(\{1,4 \},\{2,3\}).$$
 In this case $\overline m_3=4$. The subsets of $[\overline m_3]=\{1,2,3,4\}$ which are relevant for the construction of $B$ are the elements of the family $\Bscr:=\{\{1,3\},\{1,4\},\{2,3\},\{2,4\},\{1,2,3,4\},\emptyset\}$, since $\Pi_{=2,\ge2}(B,B\p c;1,1,2)=\emptyset$ for $B\notin\Bscr$. We now compute $\Pi_{=2,\ge2}(B,B\p c;1,1,2)$ for $B\in\Bscr$. For shortness, we write $\Pi_{=2,\ge2}(B,B\p c)$. We have 
 $$\Pi_{=2,\geq2}(\{1,3\},\{2,4\})=\Pi_{=2,\geq2}(\{2,4\},\{1,3\})={\bf s_1},$$ 
 $$\Pi_{=2,\geq2}(\{1,4\},\{2,3\})=\Pi_{=2,\geq2}(\{2,3\},\{1,4\})={\bf s_2},$$ and finally we have 
 $$\Pi_{=2,\ge2}([\overline m_3],\emptyset)=\Pi_{=2,\ge2}(\emptyset,[\overline m_3])=\{{\bf s_1},{\bf s_2}\}.$$ We stress that $\Pi_{=2,\ge2}([\overline m_3],\emptyset)$ corresponds to the case  where  only integration with respect to the Brownian part is considered, while $\Pi_{=2,\ge2}(\emptyset,[\overline m_3])$ corresponds to the opposite case where only integration with respect to the jump part is considered. Hence,
\equa
\EE \big[J_1^{\alpha_1}(F^1)_t  J_1^{\alpha_2}(F^2)_t   J_2^{\alpha_3,\alpha_4 }(F^3)_t\big] 
&=& C_1 \int_0^t \int_0^r  F^1(u)  F^2(r)  F^3(u,r)  du dr \\&& + C_2 \int_0^t \int_0^r  F^1(r)  F^2(u)  F^3(u,r)  du dr,
\tion 
where 
$$C_1= \nu(\alpha_1\alpha_3)   \nu(\alpha_2\alpha_4)  +   \nu(\alpha_1\alpha_3) (\alpha_2\alpha_4)(0)\sigma^2+   \nu(\alpha_2\alpha_4)(\alpha_1\alpha_3)(0)\sigma^2    +   (\alpha_1\alpha_2\alpha_3\alpha_4)(0)\sigma^4 $$
and 
$$C_2= \nu(\alpha_2\alpha_3)   \nu(\alpha_1\alpha_4)  +   \nu(\alpha_2\alpha_3) (\alpha_1\alpha_4)(0)\sigma^2+    \nu(\alpha_1\alpha_4)(\alpha_2\alpha_3)(0)\sigma^2   + (\alpha_1\alpha_2\alpha_3\alpha_4)(0) \sigma^4 .$$
\item For $N=3$ with   $m_1=1,$ $m_2=1$ and  $m_3=3$ we notice that  $\Pi^{}_{= 2, \ge2 }(B, B^c; m_1,m_2,m_3) =\emptyset $, for every $B\subseteq\{1,2,3,4,5\}$. This can be seen because 
by Definition \ref{partitions} (i) each element of $\{\overline m_{2+1}, \overline m_{2+2}, \overline m_{2+3}\}$ has to be in a different partition set. So we should have a least 3 partition sets, each with (at least) 2 elements. This is impossible as $m_1+m_2+m_3=5.$ Hence,
\equa
\EE \big[J_1^{\alpha_1}(F^1)_t  J_1^{\alpha_2}(F^2)_t   J_2^{\alpha_3,\alpha_4,\alpha_5}(F^3)_t \big]=0.
\tion
It is easy to see that this observation can be generalized as follows: If
there exists a $m_j$ such that $m_j >  m_1+...+m_{j-1}+m_{j+1} +...+m_N$ then 
$$ \EE  \bigg [\prod_{j=1}^N J_{m_j}^{\al_{\overline m_{j-1}+1: \overline m_j}}(F^j)_t\bigg ] =0.$$ 
\end{enumerate}
\end{example}

Moment formulas similar to \eqref{moment-formula} have also been obtained by Peccati and Taqqu in \cite[Corollary 7.4.1]{PT11} and  by  Last et al.\ in \ 
\cite[Theorem 1]{LPST14} for multiple integrals (similar to the multiple integrals introduced by It\^o in \cite{I56}) generated by a compensated 
Poisson random measure. The proofs in \cite{PT11}  and  \cite{LPST14} rely on  \emph{Mecke's Formula} (see \cite{M67}),  which is not applicable if the 
L\'evy process has a Gaussian part. Here we use \eqref{levy-product}  instead of \emph{Mecke's Formula}. We  stress that for the case  without Gaussian part 
the moment formula  in \cite[Theorem 1]{LPST14}
only requires an $L\p1$-condition. This is due to the fact that for $\sig=0$ the multiple integrals can be considered pathwise.

\begin{remark}[Product and moment formula for linear combinations of elementary functions]\label{rem:ext.lin.com}
 Let  $\Escr^{m_j}_T $ denote the linear subspace of  $L\p2(M\p{m_j}_T,\rho\p{\al_{\overline m_{j-1}+1:\overline m_j}})$ generated by  $F_{\otimes_{m_j}} \in \Bb_T\p {\otimes m_j}$ restricted to ${M}_{\,T}^{\, m_j}$. The elementary iterated  integrals can be uniquely linearly extended to elements of $\Escr^{m_j}_T$. 
Hence, if $F\p j\in\Escr^{m_j}_T$ has the representation
\[
F\p j=\sum_{k=1}\p {M_j}F\p{j,k}_{\otimes\overline m_{j-1}+1:\overline m_j},\quad j=1,\ldots,N,
\]
then
\[
\prod_{j=1}\p N J_{m_j}\p{\al_{\overline m_{j-1}+1}: \al_{\overline m_j}}(F\p j)=\prod_{j=1}\p N\Big(\sum_{k=1}\p{M_j}J_{m_j}\p{\al_{\overline m_{j-1}+1}: \al_{\overline m_j}}(F\p{j,k}_{\otimes\overline m_{j-1}+1:\overline m_j})\Big).
\]
Using the formula
\[
\prod_{j=1}\p N\Big(\sum_{k=1}\p{M_j}a\p{j,k}\Big)=\sum_{j_1=1}\p{M_1}\ldots\sum_{j_N=1}\p{M_N}a\p{1,j_1}a\p{2,j_2}\cdots a\p{N,j_N},
\]
valid for real numbers $a\p{j,k}$, $k=1,\ldots,N_j$; $j=1,\ldots,N$, the multi-linearity of the tensor product in Theorem \ref{thm:prod.rule} and the linearity of the identification rule, it is clear that \eqref{product-rule} extends to this more general case.
From Corollary \ref{cor:moment.formula} it is now clear that the moment formula \eqref{moment-formula}  holds also for   $F\p j\in\Escr^{m_j}_T$.

\end{remark}

\subsection{Extensions of the product and moment formula} 

 For practical applications Assumption \ref{ass:good.sys} might not be desirable because  it  requires that $\Lm$ is  stable under multiplication.  
For example, if  $\Lm$ is an orthogonal basis of $L\p 2(\mu)$, in general,  it fails to be  stable under multiplication. However, $\Lm$ being an orthogonal 
basis is especially interesting because then $\Xscr_\Lm$, defined in \eqref{X-lambda}, consists of countably many orthogonal martingales, and it possesses the CRP with respect to $\Fbb\p X$ in the simpler form given in Remark \ref{rem:alt.rep.crp}.  Therefore, in the present subsection, we extend \eqref{product-rule} and \eqref{moment-formula}  to products of iterated integrals generated by $\Xscr_\Lm$, where $\Lm$ is an arbitrary system in $\Lm\subseteq\bigcap_{p\geq1}L\p p(\mu)$.

To begin with, we prove the following technical lemma.  We denote by $\Bb_\Rbb$ the space of bounded measurable functions on $\Rbb$.
\begin{lemma}\label{lem:bou.fun.app}
Let $\al\in L\p2(\mu)\cap L\p p(\mu)$, $p>2$, and let $\Lm:=L\p1(\mu)\cap\Bb_\Rbb$. Then there exists $(\al\p k)_{k\geq1}\subseteq\Lm$ such that $\al\p k\longrightarrow \al$ in $L\p q(\mu)$  as $k\rightarrow+\infty$ for every $2 \le q\leq p$.

\end{lemma}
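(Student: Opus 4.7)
The plan is to obtain $\al^k$ by truncating $\al$ both in support (away from the origin and at infinity) and in amplitude, leaving the value at $0$ unchanged. Concretely, for each $k\ge1$ I set
\[
  A_k:=\{x\in\Rbb: 1/k\le |x|\le k\}, \qquad \al^k(x):=\al(0)\,1_{\{0\}}(x)+\al(x)\,1_{A_k}(x)\,1_{\{|\al(x)|\le k\}}(x).
\]
The membership $\al^k\in\Lm=L\p1(\mu)\cap\Bb_\Rbb$ will be checked directly: boundedness is immediate from the two truncations, and since $\nu$ is a L\'evy measure (hence $\nu(A_k)<+\infty$ as $A_k$ is bounded away from $0$), the estimate
\[
  \int|\al^k|\,\rmd\mu \le \sig\p2 |\al(0)|+k\,\nu(A_k)<+\infty
\]
gives the $L\p1(\mu)$-integrability.

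For the convergence, I will first note the standard $L\p p$-interpolation trick $|\al|\p q\le|\al|\p2+|\al|\p p$, valid for every $q\in[2,p]$ (split according to whether $|\al|\le1$ or $|\al|>1$). This gives $\al\in L\p q(\mu)$ for all such $q$, so that $|\al|\p q$ is a $\nu$-integrable dominating function. Since $\al\p k(0)=\al(0)$, the point mass $\sig\p2\dt_0$ contributes nothing to $\|\al\p k-\al\|_{L\p q(\mu)}\p q$, and
\[
  \|\al\p k-\al\|_{L\p q(\mu)}\p q=\int_{\Rbb\setminus\{0\}}|\al(x)|\p q\,1_{(A_k\cap\{|\al|\le k\})\p c}(x)\,\nu(\rmd x).
\]
For every $x\ne0$ with $|\al(x)|<+\infty$ the indicator inside the integral tends to $0$ as $k\rightarrow+\infty$, because eventually $1/k\le |x|\le k$ and $|\al(x)|\le k$. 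Dominated convergence with dominating function $|\al|\p q\in L\p1(\nu)$ then concludes.

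The only genuinely delicate point is the $L\p p$-interpolation to secure the dominating function simultaneously for every $q\in[2,p]$; the rest is direct set-theoretic bookkeeping and a single application of dominated convergence. No deeper property of the L\'evy measure than $\sig$-finiteness and the L\'evy integrability condition (which ensures $\nu(A_k)<\infty$) is needed.
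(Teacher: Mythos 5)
Your proof is correct, and it takes a genuinely different (and more self-contained) route than the paper's. The paper decomposes $\al=\et+\gm$ with $\et:=\al1_{\{|\al|\leq1\}}\in L\p2(\mu)$ and $\gm:=\al1_{\{|\al|>1\}}\in L\p p(\mu)$, invokes the density of $\Lm=L\p1(\mu)\cap\Bb_\Rbb$ in $L\p2(\mu)$ and in $L\p p(\mu)$ to approximate each piece separately (using $\mu(\{|\al|>1\})<+\infty$ to upgrade the $L\p p$-convergence of the $\gm$-part to $L\p2$-convergence, and a uniform bound on the approximants of $\et$ to upgrade the $L\p2$-convergence of the $\et$-part to $L\p p$-convergence), and then recombines. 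You instead build the approximants explicitly by a double truncation --- in support (away from $0$ and from infinity, which makes $\nu(A_k)<+\infty$ by the L\'evy-measure property and hence gives $L\p1(\mu)$-membership) and in amplitude (which gives boundedness) --- and conclude by dominated convergence with the interpolated dominating function $|\al|\p q\leq|\al|\p2+|\al|\p p$. Your argument has the advantage of not presupposing the density of $\Lm$ in $L\p q(\mu)$, which the paper asserts without proof and which would itself be established by exactly the truncation you perform; the paper's splitting, on the other hand, is the more modular argument if one already takes that density for granted. Both proofs use the same elementary interpolation $|\al|\p q\leq|\al|\p2+|\al|\p p$ to pass from convergence at the endpoints $q=2$ and $q=p$ (or from a single dominated-convergence argument) to all intermediate exponents. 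One cosmetic remark: since $\mu=\sig\p2\dt_0+\nu$, the value $\al(0)$ only matters when $\sig\p2>0$, in which case it is finite because $\al\in L\p2(\mu)$; your retaining it in $\al\p k$ is exactly what makes the atom at $0$ contribute nothing to the error, so that step is sound.
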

\begin{proof}
For $\al\in L\p2(\mu)\cap L\p p(\mu)$ we define $\et:=\al1_{\{|\al|\leq1\}}$ and $\gm:=\al1_{\{|\al|>1\}}$. Then, $\al=\et+\gm,$  where $\et\in L\p2(\mu)$ and $\gm\in L\p p(\mu)$. Because $\Lm$ is dense in $L\p q(\mu)$, for every $q>1$, there exist $(\et\p k)_{k\geq1}$ and $(\gm\p k)_{k\geq1}$ in $\Lm$ such that $\et\p k\longrightarrow \et$ in $L\p2(\mu)$ and $\gm\p k\longrightarrow \gm$ in $L\p p(\mu)$ as $k\rightarrow+\infty$, respectively.  Since $\mu(\{|\al|>1\})<+\infty$, we have  $\gm\p k\longrightarrow \gm$   also in $L\p2(\mu)$. Notice that, because $\et$ is bounded by one, we can also assume that $(\et\p k)_{k\geq1}$ is uniformly bounded by one.  But then  $\et\p k\longrightarrow \et$   in $L\p p(\mu)$.  A sequence $(\al\p k)_{k\geq1}\subseteq\Lm$ converging to $\al$ in $L\p2(\mu)$ and in $L\p p(\mu)$ is given, for example, by $\al\p k:=\et\p k1_{\{|\al|\leq1\}}+\gm\p k1_{\{|\al|>1\}}\in\Lm$, $k\geq1$.
Finally, if $(\al\p k)_{k\geq1}$ converges to $\al$ in $L\p 2(\mu)$ and $L\p p(\mu)$, then it also converges to $\al$ in $L\p q(\mu)$ for every $2 \le q\leq p$ .
\end{proof}

We now come to the following proposition, which, in particular, extends \eqref{product-rule} to $\Jscr\p{\Xscr_\Lm}_\rme$,  provided that  $\Lm\subseteq\bigcap_{p\geq2}L\p p(\mu)$.
\begin{proposition}\label{prop:con.sys}  Assume $X$ is a L\'evy process with characteristics $(\gm,\sig\p2,\nu)$.
Let $\Lm\subseteq\bigcap_{p\geq2}L\p p(\mu)$ and let  $\Xscr_\Lm$ denote  the associated family of martingales. 
Then 

\textnormal{(i)} $X\p \al$ possesses finite moments of every order, for every $\al\in\Lm$. 
 
\textnormal{(ii)} $\Lm$ is a total system in $L\p 2(\mu)$ if and only if $\Xscr_\Lm$ possesses the chaotic representation property with respect to  $\Fbb\p X$.

\textnormal{(iii)} The product formula \eqref{product-rule} and the moment formula \eqref{moment-formula} hold for the family $\Jscr\p{\Xscr_\Lm}_\rme$ of elementary iterated integrals generated by $\Xscr_\Lm$ .

\textnormal{(iv)} Let $F\p j\in\Escr\p{m_j}_T$, $j=1,\ldots,N$. Then the product formula \eqref{product-rule} and the moment formula \eqref{moment-formula} hold for the product $\prod_{j=1}\p N J_{m_j}\p{\al_{\overline m_{j-1}+1}: \al_{\overline m_j}}(F\p j)$.
\end{proposition}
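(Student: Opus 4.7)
The plan is to handle (i) and (ii) directly, then reduce (iii) to the case already covered by Theorem \ref{thm:prod.rule} via an approximation argument, and finally derive (iv) by multilinearity.

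For (i), $X\p\al$ is a L\'evy process whose L\'evy measure is the image $\nu\circ\al\p{-1}$ restricted to $\Rbb\setminus\{0\}$. Existence of moments of order $p$ for a L\'evy process is equivalent to integrability of $|y|\p p$ against this measure outside the origin, and the image-measure identity gives $\int_{|y|>1}|y|\p p\,(\nu\circ\al\p{-1})(\rmd y)\leq\mu(|\al|\p p)<+\infty$ using $\Lm\subseteq L\p p(\mu)$. Part (ii) is a reformulation of \cite[Theorem 6.6]{DTE16}: the ``if'' direction is stated there, and the ``only if'' direction follows because the Wiener--It\^o isometry identifies the first-chaos part of $\Hscr\p2(\Fbb\p X)$ with $L\p2(\mu)$, so the CRP forces the linear hull of $\Lm$ to be dense in $L\p2(\mu)$.

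For (iii), the plan is to approximate the (possibly unbounded) functions in $\Lm$ by bounded multiplicative ones. I introduce the auxiliary system $\Lm':=L\p1(\mu)\cap\Bb_\Rbb$, which is straightforward to verify as satisfying Assumption \ref{ass:good.sys}: it is closed under multiplication (boundedness of one factor absorbs the $L\p1$-bound of the other), $1_{\Rbb\setminus\{0\}}\al\in\Lm'$ whenever $\al\in\Lm'$, and $\Lm'$ is total in $L\p2(\mu)$ because it contains indicators of sets of finite $\mu$-measure. Hence Theorem \ref{thm:prod.rule} and Corollary \ref{cor:moment.formula} apply to $\Jscr_\rme$ generated by $\Xscr_{\Lm'}$. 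Given $\al_1,\ldots,\al_{\ol m_N}\in\Lm$, I apply Lemma \ref{lem:bou.fun.app} with $p$ chosen sufficiently large (depending on $\ol m_N$) to obtain approximations $(\al\p n_k)_{n\geq1}\subseteq\Lm'$ converging to $\al_k$ in $L\p q(\mu)$ for every $2\leq q\leq p$. By the L\'evy--It\^o decomposition together with BDG and Rosenthal-type inequalities, this propagates to $X\p{\al\p n_k}\to X\p{\al_k}$ in $\Hscr\p q$ for the same range of $q$; this is the key convergence driving the rest of the proof.

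Passing to the limit in \eqref{product-rule} applied to the approximating family is the heart of the argument. On the left-hand side, Lemma \ref{lem:mom.it.int}(ii) together with H\"older yields $L\p1$-convergence of the product of elementary iterated integrals. On the right-hand side I treat each term indexed by $(B,\mathbf s,\mathbf i)$ separately: the deterministic factors $\nu(\al\p n_{(S_\ell)})$ and $\sig\p2\al\p n_{(S_\ell)}(0)$ converge by H\"older (at most $\ol m_N$ factors appear in each $\al_{(S)}$, which fixes the required $p$), while the martingale integrators satisfy $X\p{\widehat\al\p n_{(S)}}\to X\p{\widehat\al_{(S)}}$ in $\Hscr\p q$ by the same H\"older argument. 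A version of Lemma \ref{lem:mom.it.int}(ii) adapted to iterated integrals with mixed stochastic--deterministic integrators, proved by an identical induction, then delivers convergence of each iterated integral on the right-hand side. This establishes \eqref{product-rule} for $\Jscr_\rme$ generated by $\Xscr_\Lm$, and \eqref{moment-formula} follows by taking expectations and applying dominated convergence, exactly as in the proof of Corollary \ref{cor:moment.formula}. Part (iv) follows at once from multilinearity (see Remark \ref{rem:ext.lin.com}): every $F\p j\in\Escr\p{m_j}_T$ is a finite linear combination of tensor products of bounded functions, so expanding the product and applying (iii) to each summand yields both formulas for the general integrands.

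The main obstacle I anticipate is the term-by-term limit passage on the right-hand side of \eqref{product-rule}, because the number of nested integrations can be as large as $\ol m_N$, the integrators are mixed deterministic--stochastic, and the coefficients $\al\p n_{(S)}$ are products of up to $\ol m_N$ approximated functions. One must therefore choose the integrability index $p$ in Lemma \ref{lem:bou.fun.app} large enough from the outset and then carry out a careful but routine induction on $|\mathbf s|$ patterned after Lemma \ref{lem:mom.it.int}(ii). Everything else is essentially bookkeeping.
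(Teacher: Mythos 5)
Your proposal is correct and follows essentially the same route as the paper: the auxiliary system $L\p1(\mu)\cap\Bb_\Rbb$ satisfying Assumption \ref{ass:good.sys}, approximation of the $\al_k$ via Lemma \ref{lem:bou.fun.app} with a sufficiently large exponent, Lemma \ref{lem:mom.it.int}(ii) plus H\"older for the left-hand side, and a term-by-term $L\p2$ limit (which the paper carries out via a telescoping sum with It\^o's isometry and Cauchy--Schwarz, matching your ``adapted induction'') for the right-hand side, with (iv) by multilinearity. The only real deviation is part (i), where you invoke the classical L\'evy-measure moment criterion and the image-measure identity instead of the paper's appeal to Kunita's $L\p p$-estimate \eqref{eq:Lp.est}; both are valid, though the paper reuses that estimate quantitatively in the approximation step.
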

\begin{proof}
First we verify (i). By \cite[Corollary 2.12]{R04}, there exists a constant $C_p>0$ such that
\begin{equation}\label{eq:Lp.est}
 \|X\p{\al}_T\|^p_{L\p{p}(\Pbb)}\leq C_pT \big(\|\al\|_{L\p2(\mu)}\p{p}+\|\al\|_{L\p{p}(\mu)}\p{p}\big).
\end{equation} 
Hence, $\Xscr_\Lm$ is a family of martingales with finite moments of every order if $\Lm\subseteq\bigcap_{p\geq2}L\p p(\mu)$. To see (ii) we refer to \cite[Theorem 6.6]{DTE16}.  
 To show (iii), let $\tilde\Lm:= L\p1(\mu)\cap\Bb_\Rbb$. Then  $\tilde \Lm$ satisfies Assumption \ref{ass:good.sys} and, according to Theorem \ref{thm:prod.rule}, 
the  multiplication formula \eqref{product-rule} holds for $\Jscr\p{\Xscr_{\tilde \Lm}}_\rme$. 
We now show by approximation that \eqref{product-rule} holds also for  $\Jscr\p{\Xscr_{\Lm}}_\rme$ if $\Lm\subseteq\bigcap_{p\geq2}L\p p(\mu)$.
We set $q_0:=2\p{\overline m_N}N$. By Lemma \ref{lem:bou.fun.app}, for every $\al\in\Lm$  there exists  a sequence $(\al\p k)_{k\geq1}\subseteq{\tilde\Lm}$ such 
that $\al\p k\longrightarrow \al$ in $L\p2(\mu)$ and in $L\p{q_0}(\mu)$ as $k\rightarrow+\infty$.  Since \eqref{poisson-integral} 
implies $X\p{\al\p k}_T-X\p{\al}_T=X\p{\al\p k-\al}_T$ a.s., by \eqref{eq:Lp.est} we get
\[
\|X\p{\al\p k}_T-X\p{\al}_T\|_{L\p{q_0}(\Pbb)}=\|X\p{\al\p k-\al}_T\|_{L\p{q_0}(\Pbb)}\longrightarrow0\quad \textnormal{ as }\ k\rightarrow+\infty.
\]
Let now  $(\al\p k_i)_{k\in\Nbb}\subseteq\tilde\Lm$ and $\al\p k_i\longrightarrow \al_i$ in $L\p2(\mu)$ and $L\p{q_0} (\mu)$, for every $i=1,\ldots,\overline m_N$. Then, setting $F^j:=F_{\otimes \overline m_{j-1}+1:\overline m_j}$ for every $j=1,\ldots,N$, Lemma \ref{lem:mom.it.int} (ii) with $p=N$ implies 
\begin{eqnarray*} 
\Ebb\Bigg[\sup_{t\in[0,T]}\Big|J\p{\al\p k_{\overline m_{j-1}+1:\overline m_j}}_{m_j}(F^j)_t-J\p{\al_{\overline m_{j-1}+1:\overline m_j}}_{m_j}(F^j)_t\Big|^{N}\Bigg]\longrightarrow0\quad\textnormal{ as } \ k\rightarrow+\infty.
\end{eqnarray*}
This yields, as a consequence of H\"older's inequality,
\begin{equation*}
\prod_{j=1}\p N J\p{\al\p k_{\overline m_{j-1}+1:\overline m_j}}_{m_j}(F^j)_t\longrightarrow \prod_{j=1}\p N J\p{\al_{\overline m_{j-1}+1:\overline m_j}}_{m_j}(F^j)_t\quad \textnormal{in }\ L\p1(\Pbb)\ \textnormal{ as }\ k\rightarrow+\infty,
\end{equation*}
for every $t\in[0,T]$.
For the  right-hand side of \eqref{product-rule}, we will show that 
$$\int_{M_t^{|{\bf s}|}} \bigg( \bigotimes_{j=1}^N   F\p j \bigg )_{\bf s}(t_1,\ldots,t_{|{\bf s}|}) 
  \rmd Y_{t_1}^{\al^{k,B}_{(S_1),i_1}} \ldots \rmd Y^{\al^{k,B}_{(S_{|{\bf s}|}), i_{|{\bf s}|}}}_{t_{|{\bf s}|}} $$
converges in $L\p2(\Pbb)$. For this, we first consider the convergence of  the processes $Y^{\al^{k,B}_{(S_r),i_r}}$. 

We define $Y^{\al^{B}_{(S_r),i_r}}$ according to \eqref{Y-in-detail}. We use the representation  \eqref{Y-in-detail} and discuss the convergence cases separately. 
Notice that, since we have chosen the sequence $\al\p k_j$ such that  $\al\p k_j \longrightarrow \al_j$ in $L^q(\mu)$   for every $2 \le q \le q_0=2\p{\overline m_N}N$, $j=1,\ldots,N$, and  $\mu = \sigma^2 \delta_0 + \nu$, 
it follows that
\equa
\nu\big(\al^{k}_{(S_r)}\big)  = \nu\Big(  \prod_{\ell \in S_r}  \al^{k}_{\ell}\Big)   \longrightarrow  \nu\Big(  \prod_{\ell \in S_r}  \al_{\ell}\Big)  = \nu\big(\al_{(S_r)}\big)\quad \text{ and } \quad \al^{k}_{(S_r)}(0) \longrightarrow  \al_{(S_r)}(0)\quad\ \text{ as }\ k\to +\infty.
\tion
Therefore, we also have   $ \al^k_{(S_r)}(0)   W^\sigma \longrightarrow  \al_{(S_r)}(0)   W^\sigma$ 
in $\Hscr\p{\,2}$ as $k\to+\infty$. Finally,  
$$X^{\widehat \al^k_{(S_r)}} =\Big(1_{ [0, \cdot]\times (\Rbb \setminus \{ 0\}) } \prod_{\ell \in S_r}  \al^{k}_{\ell}\Big) \ast  \tilde N \longrightarrow   \Big(1_{ [0, \cdot]\times (\Rbb \setminus \{ 0\}) } \prod_{\ell \in S_r}  \al_{\ell}\Big) \ast  \tilde N =  X^{\widehat \al_{(S_r)}}\ \text{ in }\ \Hscr\p2\ \text{ as }\ k\to+\infty, $$
since our assumptions imply that  $\widehat \al^k_{(S_r)} \longrightarrow \widehat \al_{(S_r)}$ in $L^2(\nu)$ as $k\to+\infty$.  Using a telescopic sum we estimate
\equal \label{telescop}
&&\less \Ebb\bigg[\bigg(\int_{M_t^{|{\bf s}|}} \bigg( \bigotimes_{j=1}^N   F\p j \bigg )_{\bf s}(t_1,\ldots,t_{|{\bf s}|}) 
  \rmd Y_{t_1}^{\al^{k,B}_{(S_1),i_1}} \ldots \rmd Y^{\al^{k,B}_{(S_{|{\bf s}|}) ,i_{|{\bf s}|}}}_{t_{|{\bf s}|}} \notag  \\
  && \quad \quad \quad -   \int_{M_t^{|{\bf s}|}}\bigg( \bigotimes_{j=1}^N   F\p j \bigg )_{\bf s}(t_1,\ldots,t_{|{\bf s}|}) 
  \rmd Y_{t_1}^{\al^{B}_{(S_1),i_1}} \ldots \rmd Y^{\al^{B}_{(S_{|{\bf s}|}), i_{|{\bf s}|}}}_{t_{|{\bf s}|}}      \bigg)\p2\bigg] \notag 
\\
&\leq&   C \sum_{r = 1}^N \Ebb\bigg[\bigg(\int_{M_t^{|{\bf s}|}} \bigg( \bigotimes_{j=1}^N   F\p j \bigg )_{\bf s}(t_1,\ldots,t_{|{\bf s}|}) 
  \rmd Y_{t_1}^{\al^{k,B}_{(S_1),i_1}} \ldots    \rmd Y_{t_1}^{\al^{k,B}_{(S_{r-1}),i_{r-1}}} \rmd (Y_{t_r}^{\al^{k,B}_{(S_1),i_r}}    -  Y_{t_r}^{\al^{B}_{(S_1),i_r}} ) \notag   \\
  &&\quad \quad \quad\quad \quad \quad \times  \rmd Y_{t_1}^{\al^{B}_{(S_{r+1}),i_{r+1}}} \ldots     \rmd Y^{\al^{B}_{(S_{|{\bf s}|}), i_{|{\bf s}|}}}_{t_{|{\bf s}|}}  \bigg)\p2\bigg] .
\tionl
 For the addends on the right-hand side of \eqref{telescop}, using It\^o's isometry whenever the integrator 
is an $\Hscr\p{\,2}$-martingale  and Cauchy-Schwarz inequality for integrators absolutely continuous with respect to $\rmd t$,  we find two constants 
\[
c\p k:=c(T,\al^{k,B}_{(S_1)i_1},\ldots,\al^{k,B}_{(S_{r-1})i_{r-1}},\al^{B}_{(S_{r+1})i_{r+1}},\ldots,\al^{B}_{(S_{|{\bf s}|}) i_{|{\bf s}|}},\sig,\nu)>0,\quad  C(T,\al^{k,B}_{(S_{r})i_{r}},\al^{B}_{(S_{r})i_{r}})>0,
\]  such that
\begin{equation}\label{eq:est.cau.seq.it.int}
\begin{split}
\Ebb\bigg[\bigg(\int_{M_t^{|{\bf s}|}} \bigg( \bigotimes_{j=1}^N   &F\p j \bigg )_{\bf s}(t_1,\ldots,t_{|{\bf s}|}) 
  \rmd Y_{t_1}^{\al^{k,B}_{(S_1)i_1}} \ldots    \rmd Y_{t_1}^{\al^{k,B}_{(S_{r-1})i_{r-1}}}\times\\&\times \rmd (Y_{t_r}^{\al^{k,B}_{(S_1)i_r}}    -  Y_{t_r}^{\al^{B}_{(S_1)i_r}} )\rmd Y_{t_1}^{\al^{B}_{(S_{r+1})i_{r+1}}} \ldots     \rmd Y^{\al^{B}_{(S_{|{\bf s}|}) i_{|{\bf s}|}}}_{t_{|{\bf s}|}}  \bigg)\p2\bigg] 
\\&\hspace{1cm}\leq
c\p k\,C(T,\al^{k,B}_{(S_{r})i_{r}},\al^{B}_{(S_{r})i_{r}})\,\int_{M_t^{|{\bf s}|}} \bigg ( \bigotimes_{j=1}^N   F\p j \bigg )_{\bf s}\p2(t_1,\ldots,t_{|{\bf s}|})\rmd  \lambda^{|{\bf s}|} (t_1,\ldots t_{|{\bf s}|}).
\end{split}
\end{equation}
We notice that the constant $c\p k$ remains bounded as $k\to+\infty$. Indeed, $c\p k$ can be explicitly computed and consists of products of terms of the form $\nu\big((\al^{k,B}_{(S_{j})i_{j}})\p2\big)$ or $\al^{k}_{(S_j)}(0)$, or $\nu\big(\al^{k,B}_{(S_{j})i_{j}}\big)$, as well as $T$ and  $\sigma^2$ for   $j=1,\ldots,r-1$, and similarly, but not dependent on $k$,  for $j=r+1,\ldots,|\bf s|$.  
 The constant $C\p k:=C(T,\al^{k,B}_{(S_{r})i_{r}},\al^{B}_{(S_{r})i_{r}})$ which arises from  It\^o's isometry or Cauchy-Schwarz inequality concerning the integrator $( Y_{t_r}^{\al^{k,B}_{(S_1)i_r}}    -  Y_{t_r}^{\al^{B}_{(S_1)i_r}} )$, can assume 
the values $ C\p k= \sigma^2 (\al^{k}_{(S_r)}(0)  -  \al^{k}_{(S_r)}(0) )^2 $ or   $ C\p k =\nu((\al^{k}_{(S_r)}  -\al_{(S_r)})^2 )$
when the integrator is in $\Hscr\p2, $ while we would have $C\p k= T (\nu(\al^{k}_{(S_r)} )  -   \nu(\al_{(S_r)} ))^2 $  or
$C\p k=T\sigma^4(\al^{k}_{(S_r)}(0)   -  \al_{(S_r)}(0))^2$ for deterministic integrators. But in any case, 
$$ C\p k=C(T,\al^{k,B}_{(S_{r})i_{r}},\al^{B}_{(S_{r})i_{r}}) \longrightarrow0  \quad \text{as} \quad  k\to+\infty. $$

The proof of the product formula for this more general case is complete. For the moment formula \eqref{moment-formula} we observe that this is a direct consequence of the product formula \eqref{product-rule} and the proof can be given as in Corollary \ref{cor:moment.formula}. Clearly (iv) is a direct consequence of (iii) because of the linearity of the iterated integrals and the multi-linearity of the tensor product. The proof of the proposition is complete.

\end{proof}
We now generalize \eqref{product-rule}  and  \eqref{moment-formula} to the case in which the functions  $F^j$ need not to be bounded but, rather, satisfy some   integrability condition. This is the main result of the present paper. For the definition of ${\bf s}$ recall Definition \ref{partitions} and the relations \eqref{s-no-zero} and \eqref{s-rule}.

 \begin{theorem}\label{thm:gen.it.in}  
 Let $X$ be a L\'evy process with  characteristic triplet $(\gm,\sig\p2,\nu)$. 
Let   $\al_1,\ldots, \al_{\, \overline  m_N}$ belong to $ \bigcap_{p\geq2}L\p p(\mu)$
and  assume that $F\p j  \in L\p2(M_T\p{m_j},\rho\p{\al_{\, \overline m_{j-1}+1:\overline m_j}})$, $j=1,
\ldots,N$, are such that
\begin{equation}\label{eq:gen.it.in.cond}
\int_{M_T^{|{\bf s}|}} \bigg ( \bigotimes_{j=1}^{ N} F^j \bigg )^2_{\bf s}(t_1,\ldots,t_{|{\bf s}|}) 
  \rmd\lm( t_1,\ldots, t_{|{\bf s}|}) < \infty, \quad \forall\, B \subseteq [\overline m_N], \,  \forall\, {\bf s} \in  \Pi_{\le 2, \ge 1}^{}(B,B^c; m_1,\ldots,m_N).
\end{equation}
Then, the product formula \eqref{product-rule} and the moment formula \eqref{moment-formula}  extend to $\prod_{j=1}\p NJ_{m_j}\p{\al_{\overline m_{j-1}+1: \overline m_j}}(F\p j)_t$ for every $t\in[0,T].$
\end{theorem}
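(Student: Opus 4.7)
The strategy is to approximate each $F^j$ by elementary functions and invoke Proposition~\ref{prop:con.sys}(iv), then pass to the limit in the product formula \eqref{product-rule}. Concretely, the plan is to choose for every $j$ a sequence $F^{j,k}\in\Escr_T^{m_j}$ such that $F^{j,k}\to F^j$ in $L^2(M_T^{m_j},\rho^{\al_{\overline m_{j-1}+1:\overline m_j}})$ and, simultaneously for every $B\subseteq[\overline m_N]$ and every ${\bf s}\in\Pi^{}_{\le 2,\ge 1}(B,B^c;m_1,\ldots,m_N)$, such that $(\bigotimes_{j=1}^N F^{j,k})_{\bf s}\to(\bigotimes_{j=1}^N F^j)_{\bf s}$ in $L^2(M_T^{|{\bf s}|},\lm^{|{\bf s}|})$. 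Since $\rho^{\al_{1:m_j}}$ is a constant multiple of $\lm^{m_j}$, all the relevant approximations reduce to Lebesgue spaces, and such a joint sequence can be built by bounded truncation combined with approximation by indicator tensor products on dyadic partitions of $[0,T]$; the integrability condition \eqref{eq:gen.it.in.cond} supplies the common $L^2$-framework that makes this construction possible.

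By Proposition~\ref{prop:con.sys}(iv), the identity \eqref{product-rule} holds verbatim with $F^{j,k}$ in place of $F^j$ for every $t\in[0,T]$. Each summand on the right-hand side is a mixed iterated integral
\[
I^k_{B,{\bf s},{\bf i}}(t):=\int_{M_t^{|{\bf s}|}}\Big(\bigotimes_{j=1}^N F^{j,k}\Big)_{\bf s}(t_1,\ldots,t_{|{\bf s}|})\,\rmd Y_{t_1}^{\al^B_{(S_1),i_1}}\cdots\rmd Y_{t_{|{\bf s}|}}^{\al^B_{(S_{|{\bf s}|}),i_{|{\bf s}|}}},
\]
with integrators of the types listed in \eqref{Y-in-detail}. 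Iterating It\^o's isometry for the $\Hscr^2$-valued integrators and the Cauchy--Schwarz inequality for the absolutely continuous ones, exactly as in the estimate \eqref{eq:est.cau.seq.it.int} appearing in the proof of Proposition~\ref{prop:con.sys}, one controls $\|I^k_{B,{\bf s},{\bf i}}(t)\|_{L^2(\Pbb)}^2$ by a constant $C_{B,{\bf s},{\bf i}}$ depending only on $T$, $\sig^2$ and the quantities $\nu(\al_{(S)}^2)$ and $\al_{(S)}(0)$, times $\int_{M_t^{|{\bf s}|}}(\bigotimes_{j=1}^N F^{j,k})_{\bf s}^2\,\rmd\lm^{|{\bf s}|}$. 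By the joint convergence arranged above, the sequence $\{I^k_{B,{\bf s},{\bf i}}(t)\}_k$ is Cauchy in $L^2(\Pbb)$, and its limit $I_{B,{\bf s},{\bf i}}(t)$ is the natural extension of the corresponding term to $F^j$.

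Summing over all $B$, ${\bf s}$, ${\bf i}$, the right-hand side of \eqref{product-rule} for the approximants converges in $L^2(\Pbb)$ to the candidate right-hand side for $F^j$. On the other hand, by the isometry of Proposition~\ref{prop:isom.mul.int}(i) applied factorwise, $J_{m_j}^{\al_{\overline m_{j-1}+1:\overline m_j}}(F^{j,k})_t\to J_{m_j}^{\al_{\overline m_{j-1}+1:\overline m_j}}(F^j)_t$ in $L^2(\Pbb)$; passing to a subsequence along which every factor converges $\Pbb$-a.s.\ yields $\prod_j J_{m_j}^{\al_{\overline m_{j-1}+1:\overline m_j}}(F^{j,k})_t\to\prod_j J_{m_j}^{\al_{\overline m_{j-1}+1:\overline m_j}}(F^j)_t$ $\Pbb$-a.s., and identifying the two limits proves \eqref{product-rule} for $F^j$. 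Taking expectations in the extended product formula then delivers \eqref{moment-formula}: every $I_{B,{\bf s},{\bf i}}(t)$ containing at least one martingale integrator vanishes in expectation, leaving precisely the summands indexed by $\Pi^{}_{=2,\ge 2}(B,B^c;m_1,\ldots,m_N)$, as in Corollary~\ref{cor:moment.formula}. The main obstacle is the joint approximation of Step~1: one must produce a single sequence $\{F^{j,k}\}$ that controls the $L^2$-error of every restriction of the tensor products along every identification rule ${\bf s}$, and this is precisely where condition \eqref{eq:gen.it.in.cond} is used in an essential way; once this approximation is in hand, the remainder of the argument is a routine continuity extension based on the isometry estimates already established for elementary iterated integrals.
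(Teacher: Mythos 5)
Your architecture is the same as the paper's: prove \eqref{product-rule} for elementary approximants via Proposition \ref{prop:con.sys} (iv), show both sides are Cauchy in $L\p2(\Pbb)$ using It\^o isometry/Cauchy--Schwarz estimates of the type \eqref{eq:est.cau.seq.it.int}, and identify the limits. The weak point is exactly the one you flag yourself and then do not resolve: the existence of a single sequence $F\p{j,k}\in\Escr_T\p{m_j}$ with $F\p{j,k}\to F\p j$ in $L\p2(\rho\p{\al_{\overline m_{j-1}+1:\overline m_j}})$ \emph{and} $(\bigotimes_j F\p{j,k})_{\bf s}\to(\bigotimes_j F\p j)_{\bf s}$ in $L\p2(M_T\p{|{\bf s}|},\lm\p{|{\bf s}|})$ for every $B$ and every ${\bf s}$. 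This cannot follow from $L\p2$-density alone: the identification rule restricts the tensor product to a lower-dimensional diagonal of $[0,T]\p{\overline m_N}$, and the map $H\mapsto H_{\bf s}$ is not continuous from $L\p2(\lm\p{\overline m_N})$ to $L\p2(\lm\p{|{\bf s}|})$. Saying that \eqref{eq:gen.it.in.cond} ``supplies the common $L\p2$-framework'' names the hypothesis without exhibiting the mechanism, so as written the central step of the theorem is asserted rather than proved.

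The mechanism the paper uses (and which you would need to supply) is pointwise domination plus dominated convergence on $M_T\p{|{\bf s}|}$, carried out in two stages. First, for $F\p j=1_{A_j}$ one approximates by finite sums of indicators of disjoint rectangles (Folland), which lie in $\Escr_T\p{m_j}$ and are \emph{uniformly bounded}; since a.e.\ convergence on $M_T\p{m_j}$ is preserved under the coordinate projections induced by ${\bf s}$ (each $S_\ell$ meets each block in at most one index, so no identification occurs inside a single $F\p j$), uniform boundedness on the finite-measure set $M_T\p{|{\bf s}|}$ gives the required $L\p2(\lm\p{|{\bf s}|})$-convergence by dominated convergence, with no use of \eqref{eq:gen.it.in.cond}. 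By linearity this covers simple functions. Second, for general $F\p j$ one takes simple $F\p j_k$ with $|F\p j_k|\uparrow|F\p j|$ pointwise; then $|(\bigotimes_jF\p j_k)_{\bf s}|\leq(\bigotimes_j|F\p j|)_{\bf s}$, and it is precisely hypothesis \eqref{eq:gen.it.in.cond} that makes the square of this dominating function integrable over $M_T\p{|{\bf s}|}$, so dominated convergence applies again. Your ``bounded truncation combined with indicator tensor products on dyadic partitions'' is compatible with this, but until you make the domination structure explicit the passage from $L\p2(\rho)$-approximation to $L\p2(\lm\p{|{\bf s}|})$-approximation of the identified tensor products is a genuine gap. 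The remainder of your argument (the telescoping $L\p2$-estimates, the identification of the limit of the left-hand side via a.s.\ convergence along a subsequence, and the passage to the moment formula by taking expectations) is sound and matches the paper.
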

Condition \eqref{eq:gen.it.in.cond} is an $L\p2$-bound on $(\bigotimes_{j=1}^{ N} F^j)_{\bf s}$ and ensures that all the stochastic integrals appearing on the right-hand side of \eqref{product-rule} are square integrable martingales.

 Notice that \eqref{eq:gen.it.in.cond} is fulfilled, for example,  whenever $F\p j\in L\p {2N}(M_T\p{m_j},\rho\p{\al_{\, \overline m_{j-1}+1:\overline m_j}})$ for any $j=1,\ldots,N$, (see Remark \ref{sufficient} below).

 \begin{proof}[Proof of Theorem \ref{thm:gen.it.in}]
We choose $\Lm \subseteq\bigcap_{p\geq2}L\p p(\mu)$ such that $\al_1,\ldots, \al_{\, \overline  m_N} \in \Lm$ and recall that, because of Proposition \ref{prop:con.sys} (i), the elements of $\Xscr_\Lm$ possess moments of every order.  From Proposition \ref{prop:con.sys} (iv), we know that the product rule \eqref{product-rule} extends to the case $\Lm\subseteq\bigcap_{p\geq2}L\p p(\mu)$ for $F\p j\in \Escr_T^{m_j}$,  $j=1,\ldots,N$. For later use, we derive an estimate for the  second moment of the iterated integrals on the right hand side of this extension of \eqref{product-rule}. Let  $F\p j \in \Escr_T^{m_j}$, $j=1,\ldots,N$, and $\Lm\subseteq\bigcap_{p\geq2}L\p p(\mu)$. Applying It\^o's isometry every time the process $Y^{\al^{B}_{(S_{j}) i_{j}}}$ is random, and  H\"older's inequality whenever it is deterministic, we find a constant $c(T,\al^{B}_{(S_1)i_1},\ldots,\al^{B}_{(S_{|{\bf s}|}) i_{|{\bf s}|}},\sig,\nu)>0$,  such that
\begin{equation}\label{eq:est.cau.seq.it.int.fun}
\begin{split}
\Ebb\bigg[\bigg(\int_{M_t^{|{\bf s}|}}& \bigg( \bigotimes_{j=1}^N   F\p j \bigg )_{\bf s}(t_1,\ldots,t_{|{\bf s}|}) 
  \rmd Y_{t_1}^{\al^{B}_{(S_1),i_1}} \ldots \rmd Y^{\al^{B}_{(S_{|{\bf s}|}), i_{|{\bf s}|}}}_{t_{|{\bf s}|}}\bigg)\p2\bigg]
\\&\leq
c(T,\al^{B}_{(S_1)i_1},\ldots,\al^{B}_{(S_{|{\bf s}|}) i_{|{\bf s}|}},\sig,\nu)\int_{M_t^{|{\bf s}|}} \bigg ( \bigotimes_{j=1}^N   F\p j \bigg )_{\bf s}\p2(t_1,\ldots,t_{|{\bf s}|})\rmd  \lambda^{|{\bf s}|} (t_1,\ldots t_{|{\bf s}|}).
\end{split}
\end{equation}  
The estimate \eqref{eq:est.cau.seq.it.int.fun} implies that we may extend by linearity and continuity the iterated integral appearing on the left-hand side of \eqref{eq:est.cau.seq.it.int.fun} to those functions $F\p j \in L\p2(M_T\p{m_j},\rho\p{\al_{\overline m_{j-1}+1:\overline m_j}}), \, j=1,\ldots,N$, which furthermore satisfy the integrability condition \eqref{eq:gen.it.in.cond}. We now divide the proof into three steps.

\noindent \underline{ Step 1.} In this first step we are going to show that the product formula \eqref{product-rule} extends to indicator functions $F\p j =1_{A_j} \in L\p2(M_T\p{m_j},\rho\p{\al_{\overline m_{j-1}+1:\overline m_j}})$. 
  By  \cite[Theorem 2.40]{Foll}, given  an $\varepsilon>0$ and a Borel set $A_j \in \mathcal{B}([0,T]^{m_j})$,  there exists an $M \ge 1$ and  disjoint  rectangles    
$R_{j,1},\ldots R_{j,M} \subseteq M_T\p{m_j}$ whose sides are intervals such that 
$$ \int_{M_T^{m_j}} \Big| 1_{A_j} -\sum_{i=1}^M 1_{R_{j,i}} \Big | \rmd\lm_{m_j} = \lm_{m_j}\Big(A_j \Delta \bigcup_{i=1}^M R_{j,i}\Big)   < \varepsilon,$$ 
where $A\Delta B:=(A\setminus B)\cup(B\setminus A)$ denotes the symmetric difference of $A$ and $B,$ and we used the identity $1_{A\Delta B}=|1_A-1_B|$.
 We notice that $\sum_{i=1}^M 1_{R_{j,i}} \in \Escr_T^{m_j}$  and $0\le \sum_{i=1}^M 1_{R_{j,i}} \le 1$.  Hence, since  $\rho\p{\al_{\overline m_{j-1}+1:\overline m_j}}$ is absolutely continuous with respect to $\lm\p{\, m_j}$, there are  sequences $(F\p j_k)_{k=1}^\infty  \subseteq \Escr_T^{m_j}$    with  $|F\p j_k| \le 1$ such that 
\equal \label{step-indicator}
 F\p j_k \longrightarrow  1_{A_j} \quad  \text{ in }  \quad L\p 2(M_T\p{m_j},\rho\p{\al_{\overline m_{j-1}+1:\overline m_j}}),\ \text{ as }\ k\to+\infty,\quad j=1,\ldots,N.
 \tionl
To estimate the difference below we first use a telescopic sum, apply the Cauchy-Schwartz inequality and then use  the product formula \eqref{product-rule} from Proposition \ref{prop:con.sys} (iv) so that 
\equal \label{product-formula-used}
&&\EE\bigg [ \bigg ( \prod_{j=1}^N J_{m_j}^{\al_{\overline m_{j-1}+1: \overline m_j}}(F^{j}_k)_t   -  \prod_{j=1}^N J_{m_j}^{\al_{\overline m_{j-1}+1: \overline m_j}}(F^{j}_\ell)_t\bigg )^2 \bigg ] \notag \\
&\le & c \sum_{r=1}^N  \EE \bigg [ \bigg ( \prod_{j=1}^r J_{m_j}^{\al_{\overline m_{j-1}+1: \overline m_j}}(F^{j}_k)_t  \,\, J_{m_{r+1}}^{\al^k_{\overline m_{r}+1: \overline m_{r+1}}}(F^{r+1}_k - F^{r+1}_\ell)_t   \,\, \prod_{i=r+2}^N J_{m_i}^{\al_{\overline m_{i-1}+1: \overline m_i}}(F^{i}_\ell)_t\bigg )^2 \bigg ]  \notag \\
&=& \!\!\!\! c \sum_{r=1}^N \EE \bigg [\bigg (  \sum_{B \subseteq [\overline m_N]} \,\,  \sum_{{\bf s} \in  \Pi_{\le 2, \ge 1}^{}(B,B^c; m_1,\ldots,m_N)} \,\,\  \sum_{ {\bf i}  \in  I_{{\bf s}}}  
 \int_{M_t^{|{\bf s}|}} \bigg( \bigotimes_{j=1}^r   F\p j_k \otimes     (F^{r+1}_k - F_\ell^{r+1})   \otimes \bigotimes_{i=r+2}^N    F\p i_\ell\bigg )_{\bf s}(t_1,\ldots,t_{|{\bf s}|}) \notag  \\
 && \hspace{8em} \times \rmd Y_{t_1}^{\al^{B}_{(S_1)i_1}} \ldots \rmd Y^{\al^{B}_{(S_{|{\bf s}|}) i_{|{\bf s}|}}}_{t_{|{\bf s}|}} \bigg )^2\bigg ],
\tionl 
where $c>0$ is a constant. By \eqref{eq:est.cau.seq.it.int.fun} we conclude that the right-hand side of \eqref{product-formula-used} converges to zero. Indeed,  since the integrand is bounded, by dominated convergence  and \eqref{step-indicator} we get
\equal \label{tensor-Holder}
&& \int_{M_t^{|{\bf s}|}} \bigg ( \bigotimes_{j=1}^r   F\p j_k \otimes     (F^{r+1}_k - F^{r+1}_\ell)   \otimes \bigotimes_{i=r+2}^N  F\p i_\ell\bigg )_{\bf s}\p2\rmd  \lambda^{|{\bf s}|} (t_1,\ldots t_{|{\bf s}|})  \longrightarrow 0\ \text{ as }\ k\to+\infty.
\tionl 

Hence, we have shown that  ${  Z}\p k:= \prod_{j=1}^N J_{m_j}^{\al_{\overline m_{j-1}+1: \overline m_j}}(F^{j}_k)_t$ is a Cauchy sequence in  $L\p2(\Pbb)$.  So, there exists $Z\in L\p2(\Pbb)$ such that $Z\p k\longrightarrow Z$ in  $L\p2(\Pbb)$ as $k\to\infty$. On the other side, since  $J_{m_j}^{\al_{\overline m_{j-1}+1: \overline m_j}}(F^{j}_k)_t$ converges to $ J_{m_j}^{\al_{\overline m_{j-1}+1: \overline m_j}}(F^{j})_t$ in $L\p2(\Pbb)$  as $k\to\infty$, we also have that $Z\p k\longrightarrow\prod_{j=1}^N J_{m_j}^{\al_{\overline m_{j-1}+1: \overline m_j}}(F^{j})_t$ \emph{in probability} as $k\to+\infty$. This implies 
\begin{equation}\label{eq:id.lim.prod}
Z=\prod_{j=1}^N J_{m_j}^{\al_{\overline m_{j-1}+1: \overline m_j}}(F^{j})_t\ \quad  \text{ a.s.},
\end{equation} 
because of the uniqueness of the limit in probability. Thus, $\prod_{j=1}^N J_{m_j}^{\al_{\overline m_{j-1}+1: \overline m_j}}(F^{j})_t\in L\p2(\Pbb)$ for every $t\in[0,T]$. From Proposition \ref{prop:con.sys} (iv), we know that $Z\p k$ satisfies the product formula \eqref{product-rule} for every $k$. We now discuss the convergence of the corresponding right hand side in \eqref{product-rule}  for $Z\p k$.   Similarly as for \eqref{product-formula-used} but now with $F\p j$  instead of and $F\p j_\ell,$  we see that
\equal \label{integral-difference}
\EE \bigg [ \bigg (\int_{M_t^{|{\bf s}|}} \bigg( \bigotimes_{j=1}^N   F\p j_k  -  \bigotimes_{j=1}^N    F\p j\bigg )_{\bf s}(t_1,\ldots,t_{|{\bf s}|})  
  \rmd Y_{t_1}^{\al^{B}_{(S_1)i_1}} \ldots \rmd Y^{\al^{B}_{(S_{|{\bf s}|}) i_{|{\bf s}|}}}_{t_{|{\bf s}|}} \bigg )^2 \bigg ] \longrightarrow 0\quad\text{as }\ k\to+\infty.
\tionl
 This means that also $Z$ satisfies  \eqref{product-rule}. Thus, \eqref{product-rule} holds for $ F^j= 1_{A_j} \in  L\p2(M_T\p{m_j},\rho\p{\al_{\overline m_{j-1}+1:\overline m_j}}).$ The proof of the first step is complete. 
 
 \noindent \underline{ Step 2.}  We observe that, as a consequence of   Step 1,  by linearity, the product formula \eqref{product-rule} also holds for simple functions from $L\p2(M_T\p{m_j},\rho\p{\al_{\overline m_{j-1}+1:\overline m_j}})$.

\noindent \underline{ Step 3.}
 In  this  step we show the product formula  \eqref{product-rule} for arbitrary functions $F\p j\in L\p2(M_T\p{m_j},\rho\p{\al_{\overline m_{j-1}+1:\overline m_j}})$, $j=1,\ldots, N$, which furthermore satisfy the integrability condition \eqref{eq:gen.it.in.cond}.
 Any  $F\p j\in L\p2(M_T\p{m_j},\rho\p{\al_{\overline m_{j-1}+1:\overline m_j}})$ can be pointwise approximated by a  sequence $(F\p j_k)_{k\geq1} $ of simple functions  such that $|F\p j_k| \uparrow |F\p j|$ as $k\to+\infty$ (see, for example, \cite[Theorem 2.10b]{Foll}). Because of this and \eqref{eq:gen.it.in.cond}, we get
\begin{equation}\label{eq:approx.id.rule}
\int_{M_T^{|{\bf s}|}} \bigg ( \bigotimes_{j=1}^{ N} F^j_k \bigg )^2_{\bf s}(t_1,\ldots,t_{|{\bf s}|}) 
  \rmd\lm( t_1,\ldots, t_{|{\bf s}|})\leq\int_{M_T^{|{\bf s}|}} \bigg ( \bigotimes_{j=1}^{ N} F^j \bigg )^2_{\bf s}(t_1,\ldots,t_{|{\bf s}|}) 
  \rmd\lm( t_1,\ldots, t_{|{\bf s}|})<+\infty.
\end{equation}
 This implies that we may extend by linearity and continuity the iterated integrals on the right hand side of \eqref{product-rule}  defined for simple functions $(F\p j_k)_{k\geq1} $ to  those $F\p j \in L\p2(M_T\p{m_j},\rho\p{\al_{\overline m_{j-1}+1:\overline m_j}}), \, j=1,\ldots,N$, which satisfy  \eqref{eq:gen.it.in.cond}.  Using the second step,  we  can conclude like in the first step that  \eqref{product-formula-used}  holds also for simple functions.   The difference is that now the integrand   \[ \bigg( \bigotimes_{j=1}^r   F\p j_k \otimes     (F^{r+1}_k - F_\ell^{r+1})   \otimes \bigotimes_{i=r+2}^N    F\p i_\ell\bigg )_{\bf s}\]
is not bounded. However, because of \eqref{eq:approx.id.rule}, we can apply dominated convergence to get \eqref{tensor-Holder}. Clearly, we can identify the $L\p2(\Pbb)$-limit  $Z$ of the corresponding Cauchy sequence  $Z\p k$ as in \eqref{eq:id.lim.prod}. Because of the  second step,  $Z\p k$ satisfies  \eqref{product-rule}.  To complete the proof 
we verify that also for the general case we have the correct limit expression on the right hand side of \eqref{product-rule} and, hence, that the multiplication formula holds also for the limit  $Z$.  An estimate like   \eqref{eq:est.cau.seq.it.int.fun} ensures that  the addends on the right hand side of \eqref{product-rule}  belong all to  $L\p2(\Pbb)$ for any
$F\p j\in L\p2(M_T\p{m_j},\rho\p{\al_{\overline m_{j-1}+1:\overline m_j}})$, $j=1,\dots,N$, such that \eqref{eq:gen.it.in.cond} holds. Hence, \eqref{integral-difference} follows from  \eqref{eq:gen.it.in.cond} and \eqref{eq:approx.id.rule}, because of  $|F\p j_k| \uparrow |F\p j|$ as $k\to\infty$, $j=1,\ldots,N$. This  proves  \eqref{product-rule} for the general case. 

To see \eqref{moment-formula}, because of \eqref{product-rule} and since each summand on the  right-hand side of \eqref{product-rule} belongs to $L\p2(\Pbb)$,  we can proceed as in the proof of Corollary \ref{cor:moment.formula}.  The proof of the theorem is now complete.
 \end{proof}

\begin{remark} \label{sufficient} A sufficient condition for   \eqref{eq:gen.it.in.cond}  
is  
$$ F^j \in  L\p{2N}(M_T\p{m_j}, \lm^{m_j}), \quad j=1,\ldots,N.  $$
 To see this, we recall  that, from \eqref{s-rule}, to apply the identification rule ${\bf s}: H(u_1,\ldots,u_{\overline m_N}) \mapsto H_{{\bf s}}(t_1,\ldots,t_k)$, we need $H$ to depend on $u_1,\ldots,u_{\overline m_N}.$  Therefore, we extend each $F^j$ which depends 
on the variables $u_{\overline m_{j-1}+1},\ldots, u_{\overline m_{j}}$  constantly  to all  variables $u_1,\ldots,u_{\overline m_N}$ and denote the extension by $\widehat 
F^j(u_1,\ldots,u_{\overline m_N}).$ Since  
$$|S_\ell \cap \{u_{\overline m_{j-1}+1},\ldots, u_{\overline m_{j}}\}| \le 1, $$   
the identification rule 
$$   {\bf s} : \widehat F^j(u_1,\ldots,u_{\overline m_N}) \mapsto \widehat F_{{\bf s}}^j(t_1,\ldots,t_{|{\bf s}|})$$
causes  only a one-one {\it renaming} but no identification among the variables $u_{\overline m_{j-1}+1},\ldots, u_{\overline m_{j}}.$ We have
$$ \bigg ( \bigotimes_{j=1}^{ N} F^j \bigg )_{\bf s}(t_1,\ldots,t_{|{\bf s}|}) =  (\widehat F^1)_{{\bf s}}(t_1,\ldots,t_{|{\bf s}|})  
\times \ldots \times   (\widehat F^N)_{{\bf s}} (t_1,\ldots,t_{|{\bf s}|}).$$
Consequently, by H\"older's  inequality,  
\equa
 \int_{M_T^{|{\bf s}|}} \bigg ( \bigotimes_{j=1}^{ N} F^j \bigg )^2_{\bf s}(t_1,\ldots,t_{|{\bf s}|}) 
  \rmd\lm( t_1,\ldots, t_{|{\bf s}|})  
&\le &  \prod_{j=1}^N   \left (  \int_{M_T^{|{\bf s}|}}  (  \widehat F^j )^{2N}_{\bf s} (t_1,\ldots,t_{|{\bf s}|}) 
  \rmd\lm( t_1,\ldots, t_{|{\bf s}|}) \right )^\frac{1}{N} \\
&\le &   \prod_{j=1}^N T^{|{\bf s}|-m_{j}} \left (\int_{M_T^{m_j}}  ( F^j  )^{2N}(t_1,\ldots, t_{m_{j}}) 
  \rmd\lm(t_1,\ldots, t_{m_{j}})  \right )^\frac{1}{N}.    \\
\tion
\end{remark}

\subsection{Examples for  $\Xscr_\Lm$}
We conclude this section giving  examples of  $\Xscr_\Lm$ satisfying the assumptions of Theorem \ref{thm:gen.it.in}. We observe that, if $\al\in L\p2(\mu)$, then the L\'evy measure $\nu\p \al$ of the square integrable martingale and $\Fbb$-L\'evy process $X\p \al$ is the image measure of $\nu$ under the mapping $\al$ (see \cite{Ba01}, Definition 7.6), that is,
\begin{equation}\label{eq:le.m.en.mar}
\nu\p \al(\rmd x)=(\nu\circ \al\p{-1})(\rmd x).
\end{equation}
The main point of this part is to construct families of martingales $\Xscr_\Lm$ possessing moments of every order. Because of the equivalence between the totality of a system $\Lm$ in $L\p2(\mu)$  and the CRP of the family $\Xscr_\Lm$, we will  consider systems $\Lm$ which are total in $L\p2(\mu)$ or, more specifically, orthogonal bases of $L\p2(\mu)$. 

\paragraph*{Dyadic Intervals. } This example is very simple but it is interesting because it holds without further assumptions on the L\'evy measure. Let $X$ be a L\'evy process relative to $\Fbb$ and let $(\gm,\sig\p2,\nu)$ denote its characteristic triplet. Let $\Dscr$ denote the set of dyadic numbers and define 

\[
\tilde\Lm:=\big\{1_{(a,b]}1_{\{0\}}+1_{(a,b]}1_{\Rbb\setminus\{0\}},\ a,b\in\Dscr:\ 0\notin[a,b]\big\}\cup\{0\}.
\]
Then $\tilde\Lm$ is total in $L\p2(\mu)$, consists of countably many functions and satisfies Assumption \ref{ass:good.sys}. Clearly, we can orthonormalize $\tilde\Lm$ in $L\p2(\mu)$ and obtain an orthonormal basis $\Lm$ of $L\p2(\mu)$ consisting of bounded functions. The associated family $\Xscr_\Lm$ consists of countably many orthogonal martingales and it possesses the CRP with respect to $\Fbb\p X$ (cf.\ Proposition \ref{prop:con.sys}) in the simpler version of Remark \ref{rem:alt.rep.crp}. Furthermore, from Theorem \ref{thm:gen.it.in}, we get \eqref{product-rule} and \eqref{moment-formula} for the iterated integrals generated by $\Xscr_\Lm$. 

\paragraph*{Teugels martingales. }  Teugels martingales were already discussed in \S\ref{subs:exa}. We notice that, to introduce Teugels martingales as square integrable martingales, it is enough that $X$ possesses moments of arbitrary order, that is, all monomials $p_n(x):=x\p n$ belong to $L\p2(\nu)$, for every $n\geq1$. We assume that there exist constants $\lm,\ep>0$ such that $x\mapsto1_{(-\ep,\ep)}  \rme\p{\lm |x|/2} $ belongs to $L\p2(\nu)$ and define $h_1(x):=1_{\{0\}}+1_{\Rbb\setminus\{0\}}(x)p_1(x)$ and $h_n(x):=1_{\Rbb\setminus\{0\}}(x)p_n(x)$, $n\geq2$. The system $\tilde\Lm:=\{h_n,\ n\geq1\}$ belongs to $L\p2(\mu)$ and it is total. Furthermore, 
the identity $X\p{(i)}=X\p{h_i}$ holds, where $X\p{(i)}$ denotes the $i$th Teugels martingale as introduced in \S\ref{subs:exa}. The associated family $\Xscr_{\tilde\Lm}$ of Teugels martingales is compensated-covariation stable and, according to Proposition \ref{prop:con.sys} (ii), it possesses the CRP with respect to $\Fbb\p X$. However, the system $\tilde\Lm$ does not satisfy Assumption \ref{ass:good.sys} because it does not consist of bounded functions  and it is not stable under multiplication. Let $\nu\p{h_n}$ be the L\'evy measure of the martingale $X\p{h_n}$. Then, from \eqref{eq:le.m.en.mar}, we get
\[
\int_\Rbb |x|\p m\nu\p{h_n}(\rmd x)=\int_\Rbb |x|\p {m+n}\nu(\rmd x)<+\infty.
\]
Hence $\tilde\Lm\subseteq\bigcap_{p\geq2}L\p p(\mu),$ and the family $\Xscr_{\tilde\Lm}$ is contained in $\bigcap_{p\geq2}\Hscr\p p$. From Theorem \ref{thm:gen.it.in} we get \eqref{product-rule} and \eqref{moment-formula} for the iterated integrals generated by $\Xscr_{\tilde\Lm}$. The family $\Xscr_{\tilde\Lm}$ is the family of Teugels martingales. We can orthonormalize ${\tilde\Lm}$ in $L\p2(\mu)$ to get an orthonormal system $\Lm$ consisting of polynomials. Therefore, the associated family of martingales $\Xscr_\Lm$ consists of countably many orthogonal martingales  and satisfies the  CRP with respect to $\Fbb\p X$ in the simpler form of Remark \ref{rem:alt.rep.crp}. Moreover,  the product and the moment formula hold  for the iterated integrals generated by $\Xscr_{\Lm}$.

\paragraph*{Hermite polynomials. }  Assume that $X$ is a L\'evy process with L\'evy measure $\nu$ which is equivalent to the Lebesgue measure on $\Rbb$, that is,
\begin{equation}\label{def:eq.levy.meas}
\nu(\rmd x)=h(x)\rmd x,\quad h(x)>0.
\end{equation}
This is, for example, the case if $X$ is an $\al$-stable L\'evy process (see \cite[Chapter 3]{S99}). In this case, if $(H_n)_{n\geq1}$ is the family of Hermite polynomials, setting $g(x):=(h(x))\p{-1/2}\rme\p{-x\p2/2}$, the family $\Lm:=\{P_n,\ n\geq1\}$, where $P_n:=1_{\{0\}}H_n(0)+1_{\Rbb\setminus\{0\}}gH_n$   is an orthogonal basis of $L\p2(\mu)$. However,  in general, we cannot expect that the family $\Xscr_{\Lm}$ has moments of every order, that is, that the inclusion $\Lm\subseteq\bigcap_{p\geq2}L\p p(\mu)$ holds. From \eqref{eq:le.m.en.mar}, using the definition of $g$ and of Hermite polynomials (see \cite[\S1.1]{N06}) we see that 
\[
\int_{\Rbb\setminus\{0\}}|x|\p{2m}\nu\p{P_n}(\rmd x)=\frac{1}{n!}\,\int_{\Rbb\setminus\{0\}} \big(h(x)\big)\p{1-m}\rme\p{-mx\p2}|q_{2m+n}(x)|\rmd x
\]
where $q_n$ denotes a polynomial function of order $n$, $n\geq1$. Therefore, to ensure that $\Xscr_\Lm$ has finite moments of every order, we derive the condition on $h$
\begin{equation}\label{eq:con.h.Hp}
\int_{\Rbb\setminus\{0\}} \big(h(x)\big)\p{1-m}\rme\p{-mx\p2}|x|^{2m+n}\rmd x<+\infty,\qquad \textnormal{\ for every\ } m,n\geq1.
\end{equation}
Hence, if $h$ satisfies \eqref{eq:con.h.Hp}, the moment formula \eqref{moment-formula} holds for the iterated integrals generated by the orthogonal family $\Xscr_\Lm$. 

Observe that $\al$-stable processes are an example of a family of L\'evy processes satisfying \eqref{eq:con.h.Hp}. We remark that, for an $\al$-stable L\'evy process $X$, the family $\Xscr_\Lm$ has finite moments of every order but the L\'evy process $X$ itself may possess infinite moments. This is the case, for example, if $X$ is a Cauchy process.

\paragraph*{Haar basis. } 
We consider a L\'evy process $X$ with L\'evy measure $\nu$ as in \eqref{def:eq.levy.meas}. 
Let $\lm$ be the Lebesgue measure on $(\Rbb,\Bscr(\Rbb))$ and  $\{\psi_{jk}:=2\p {\frac j2}\,\psi(2\p j\,x-k),\ x\in\Rbb,\ j,k\in\Zbb\}\subseteq L\p2(\lm)$ be the Haar basis. The generic element of the Haar basis can be written as 
\[
\textstyle\psi_{jk}(x):=2\p{\frac j2}\Bigl[1_{\left[\frac{k}{2\p{j}},\frac{2k+1}{2\p{j+1}}\right)}(x)-1_{\left[\frac{2k+1}{2\p{j+1}},\frac{k+1}{2\p{j}}\right)}(x)\Bigr]
\]
and its support is $\{\psi_{jk}\neq0\}=[\frac{k}{2\p{j}},\frac{k+1}{2\p{j}})$, $j,k\in\Zbb$.   
Note that the boundary points of these intervals are dyadic rational numbers. Defining $\Lm:=\{\psi_{jk}(0)1_{\{0\}}+1_{\Rbb\setminus\{0\}}h\p{-1/2}\psi_{jk},\ j,k\in\Zbb\}$   we obtain an orthogonal basis of $L\p2(\mu)$. The system $\Xscr_\Lm$ is a family of orthogonal martingales.  In general, the martingales in $\Xscr_\Lm$ do not have finite moments of every order. Let us denote be $\nu\p{\psi_{jk}}$ the L\'evy measure of the martingales $X\p{\psi_{jk}}$. Then, from \eqref{eq:le.m.en.mar}, we get
\[
\int_{\Rbb\setminus\{0\}} |x|\p{2m}\nu\p{\psi_{jk}}(\rmd x)=\int_{\Rbb\setminus\{0\}} (h(x))\p{1-m}|\psi_{jk}(x)|\p{2m}\nu(\rmd x).
\]
That is, we obtain the following condition on $h$
\begin{equation}\label{eq:con.h.Hw}
\int_a\p b (h(x))\p{1-m}\nu(\rmd x)<+\infty,\qquad \textnormal{for all dyadic rational numbers }\ a\ \textnormal{and }\ b:\quad 0\notin[a,b].
\end{equation}
Hence, if $h$ satisfies \eqref{eq:con.h.Hw}, we get \eqref{moment-formula} for the iterated integrals generated by $\Xscr_\Lm$. Notice that \eqref{eq:con.h.Hw} is satisfied, for example, if $h>0$ is bounded over all intervals whose extremes are dyadic rational numbers. We again find that if $X$ is an $\al$-stable L\'evy process,  condition \eqref{eq:con.h.Hw} is satisfied.

\paragraph{Acknowledgement.} PDT acknowledges Martin Keller-Ressel (TU Dresden) and funding from the German Research Foundation (DFG) under grant ZUK 64.

\end{document}